\newcommand*{\rom}[1]{\expandafter\@slowromancap\romannumeral #1@}
    \newcommand{\BA}{{\mathbb {A}}} 
    \newcommand{\BC}{{\mathbb {C}}} 
    \newcommand{\BG}{{\mathbb {G}}}
     \newcommand{\BN}{{\mathbb {N}}}
     \newcommand{\BP}{{\mathbb {P}}}
    \newcommand{\BQ}{{\mathbb {Q}}} \newcommand{\BR}{{\mathbb {R}}}
     \newcommand{\BZ}{{\mathbb {Z}}}
     \renewcommand{\CD}{{\mathcal {D}}}
     \newcommand{\CL}{{\mathcal {L}}}
    \newcommand{\CM}{{\mathcal {M}}} 
    \newcommand{\CO}{{\mathcal {O}}}
    \newcommand{\Aut}{{\mathrm{Aut}}}
    \newcommand{\cond}{\mathrm{cond}}
    \newcommand{\dR}{{\mathrm{dR}}}
    \newcommand{\Gal}{{\mathrm{Gal}}} \newcommand{\GL}{{\mathrm{GL}}}
    \renewcommand{\Im}{{\mathrm{Im}}}
    \newcommand{\ord}{{\mathrm{ord}}}
    \newcommand{\Spec}{{\mathrm{Spec}}}
    \newcommand{\wt}{\widetilde}
    \newcommand{\wh}{\widehat}
    \newcommand{\pair}[1]{\langle {#1} \rangle}
    \newcommand{\ov}{\overline}
    \newcommand{\ra}{\rightarrow} 
    \newcommand{\bs}{\backslash}
\theoremstyle{plain}
\newtheorem{thm}{Theorem}[section] \newtheorem{cor}[thm]{Corollary}
\newtheorem{lem}[thm]{Lemma}  \newtheorem{prop}[thm]{Proposition}
\newtheorem {conj}[thm]{Conjecture}
\theoremstyle{definition}
\theoremstyle{plain}
\newtheorem{thmL}{Theorem}
\theoremstyle{remark} 
\theoremstyle{remark} \newtheorem{remark}[thm]{Remark}
\theoremstyle{remark} \newtheorem{example}[thm]{Example}
    \numberwithin{equation}{section}
\begin{document}

\title{Mahler measure, motivic regulators and Dirichlet $L$-values}

\author{Wei He}
\address{He: School of Mathematics and Statistics, Xi'an Jiaotong University, Xi'an 710049, P.R. China.} 
\email{hewei0714@xjtu.edu.cn}

\author{Jungwon Lee}
\address{Lee: Max Planck Institute for Mathematics, Vivatsgasse 7, 53111 Bonn, Germany}
\email{jungwon@mpim-bonn.mpg.de}

\date{\today}

\begin{abstract}

Inspired by the work of Deninger, we present a formula that relates the Mahler measure of a two-variable variant of cyclotomic polynomial to regulator of class in motivic cohomology associated to cyclotomic fields and linear combination of special values of the derivative of Dirichlet $L$-functions. The formula is derived by studying the Beilinson regulator map applied to systematically constructed elements in the motivic cohomology group. Under linear independence hypothesis on the derivative of partial Dirichlet $L$-values at $s=0$ and $-1$, we study a Galois module structure of the relevant motivic cohomology and obtain the refined identity for a single $L$-value.
\end{abstract}

\maketitle
\setcounter{tocdepth}{1}
\tableofcontents

\section{Introduction}

The Mahler measure is defined, for a Laurent polynomial $f \in \BC[x_1^\pm, \cdots, x_n^\pm]$ with complex coefficients, by
\begin{equation} \label{def:mahler:meas}
m(f)= \frac{1}{(2 \pi i)^n} \int_{T^n} \log | f(x_1, \cdots, x_n)| \frac{dx_1}{x_1} \cdots \frac{dx_n}{x_n}
\end{equation}
where $T^n$ is the real $n$-torus. It has been regarded as a fundamental quantity, which is a version of the height of its zero locus. The Mahler measure naturally arises from the diverse contexts in arithmetic geometry
or dynamical system, for instance as a regulator or a topological entropy. 

Historically, there have been conjectural links between the Mahler measure and special values of $L$-functions. One of the first examples goes back to Smyth \cite{Smyth:81}. Applying Jensen's formula, he found a connection between the Mahler measure of Chebyshev polynomials and linear sum of Dirichlet $L$-values. For example, 
\begin{align*} \label{ex:mah:chi3}
m((x_1+x_2)^2\pm 3)&=\frac{2}{3}\log 3+\frac{\sqrt{3}}{\pi}L(2,\chi_3),
\end{align*}
where $\chi_3$ is a unique non-trivial Dirichlet character modulo $3$. There are also some numerical experiments by Boyd \cite{Boyd} for polynomials attached to elliptic curves. For example, a computation suggests
\begin{equation}\label{ec}m(x_1^{-1}+x_2^{-1}+1+x_1+x_2)=\frac{15}{(2\pi)^2}L(2,E)
\end{equation}
where $E$ is the elliptic curve defined by taking the projective closure of a zero locus of given polynomial.

The theoretical evidence underlying such phenomena was extensively settled by Deninger \cite{Den:97}. In view of Beilinson's conjecture on the relation between the regulator map and special $L$-values, Deninger gave a general geometric interpretation of the Mahler  measure $m(f)$ as a regulator of relevant motivic cohomology class {with the main boundary condition that $f(x_1,\cdots,x_{n-1},0)$ does not vanish on $T^{n-1}$. Accordingly, he showed that \eqref{ec} holds up to a non-zero rational factor predicted by the Beilinson conjecture. Later on, Besser--Deninger \cite{Bes:Den} further expressed $m(f)$ for a class of polynomials $f \in \mathbb{Q}[x_1,x_2]$ defining a CM elliptic curve $E$ over $\BQ$, in terms of regulator and $L'(0,E)$ by adopting the work of Coleman--de Shalit \cite{coleman1988p}.

In this article, we present a $\GL_1$-analogue of the result of this type, that is, we adopt the method and obtain a formula linking the Mahler measure of a two-variable variant of cyclotomic polynomial whose zero locus is $\BA_{\BZ[\mu_N]^+}^1$  to the regulator of systematically constructed relevant cohomology class and the derivative of modulo $N$ Dirichlet $L$-values at $s=0,-1$. However, we emphasise that we did not consider here the $p$-adic analogue, which is also included in  the suggestion \cite[p.20]{Bes:Den}. 

Under the linear independence hypothesis among the derivatives of partial Dirichlet $L$-function at $s=0,-1$, we study a Galois module structure of the relative motivic cohomology and obtain a special value formula relating \[L'\left(\frac{\chi(-1)-1}{2},\chi\right)\] 
to $\chi$-part of the constructed cohomology class that matches with the known cases of Beilinson's conjecture. 


\subsection{Main results}
\label{ms}

Here and throughout, we denote by $H_{\text{sing}}^{\tiny{\bullet}}$, $H_\CM^{\tiny{\bullet}}$ and $H_\CD^{\Tiny{\bullet}}$ the singular, motivic and Deligne cohomology group.

Let $N \geq 3$, $F=\BQ(\mu_N)$, $G=\Gal(F/\BQ)$. Recall that $\Psi_N$ is the $N$-th real cyclotomic polynomial given by the minimal polynomial of $\zeta_N+\zeta_N^{-1}$, where $\zeta_N\in \mu_N$ is a primitive $N$-th root of unity. 
Set $f_N(x_1,x_2):=\Psi_{N}(x_1+x_2) \in \BZ[x_1,x_2]$. Let $X=\Spec \BZ[\mu_N]$, $Z\subset \BG_{m,\BZ}^2$ be the zero locus of $f_N$ and $A$ be the union of the connected components of 
\[\ov{\BC^1\times \{z\in\BC :  |z|<1\}\cap Z(\BC)} \] 
which is a real manifold of dimension $1$. 
Note that $\partial A$ has a natural integral model over $\BZ$ as a subscheme of $Z$, where we denote by $\partial A$ abusing the notation. We have $\partial A_{\BQ}\simeq X_{\BQ}$ and $H_{\CM}^1(X,\BQ(2))=H_{\CM}^1(\partial A,\BQ(2))$.

Let $x \in \BZ[x^\pm]^\times\otimes \BQ\simeq H_{\CM}^1(\BG_{m,\BZ},\BQ(1))$ and $\wt{c}_j\in H_{\CM}^1(Z,\BQ(1))$ be the pull-back of $x$ under the map \[Z\hookrightarrow \BG_{m,\BZ}^2\xrightarrow{p_j}\BG_{m,\BZ}\] where the first map is the inclusion and $p_j$ is the projection for $j \in \{1,2\}$. Then $\wt{c}_j$ arises from a unique element $c_j\in H_{\CM}^1(Z,\partial A,\BQ(1))$.
Set \[c:=c_1\cup c_2\in H_{\CM}^2(Z,\partial A,\BQ(2))^{i=-1} \] where $i$ is an involution induced by the interchange of two coordinates on $\BG_m^2$.

Let $Y=\Spec \BZ[\mu_N]^+$. A natural morphism $X\ra Y$ induces  $H^1(X,\BQ(1))\simeq H^1(Y,\BQ(1))$ and $H^1_{\CD}(X_{\BR},\BR(1))\simeq H^1_{\CD}(Y_{\BR},\BR(1))$. Let $Y'=\Spec \BZ[\mu_N]^+[\frac{1}{\zeta_N+\zeta_N^{-1}}]\subset Y$ when $N\neq 4$, and $Y'=Y$ when $N=4$.

\begin{thmL} \label{thm:reg} 
We have the following.
\begin{enumerate}
\item There exists a commutative diagram with that rows are exact and columns maps are given by the regulator maps which are injective: 
\[\xymatrix{
0\ar[r]& H_{\CM}^1(X,\BQ(2))\ar@{_{(}->}[d]^{r_\CD}\ar[r]& H_{\CM}^2(Z,\partial A,\BQ(2))^{i=-1}\ar@{_{(}->}[d]^{r_\CD}\ar[r]& H_{\CM}^1(Y',\BQ(1))\ar@{_{(}->}[d]^{r_\CD}\ar[r]& 0\\
0\ar[r]& H_{\CD}^1(X_\BR,\BR(2))\ar[r]& H_{\CD}^2((Z,\partial A)_{\BR},\BR(2))^{i=-1}\ar[r]& H_{\CD}^1(Y_\BR,\BR(1))\ar[r]& 0
.} \]

\item Suppose that $N\neq 3,6$. The regulator map
\[r_{\CD}: H_{\CM}^2(Z,\partial A,\BQ(2))\xrightarrow{r_\CD}H_{\CD}^2((Z,\partial A)_{\BR},\BR(2))\xrightarrow{\frac{1}{2\pi i}\int_A} \BC\]
satisfies  
\begin{equation}\label{mainid}m(f_N^*)-m(f_N)=r_\CD(c)=\sum_{\chi\in \wh{G}}r_{N,\chi} L^{(N_\chi)'}(-\epsilon,\chi)\end{equation} where $N_\chi=\begin{cases}N/4,&\quad 4\parallel N, \text{$\chi$ even, $\cond(\chi)\mid N/4$}\\ 
    N,&\quad \text{otherwise}\end{cases}$, $\epsilon \in \{0, 1\}$ with $\chi(-1)=(-1)^{\epsilon}$ and $r_{N,\chi}\in \BQ(\chi)$ is an explicit constant determined in Corollary~\ref{exp}.
\end{enumerate}
\end{thmL}

\begin{remark}
According to Beilinson, the motivic cohomology groups in the first column (resp. the third column) have a well-understood $G$-module structure whose $\chi$-part for odd character $\chi$ (resp. even) is related to $L'(-1,\chi)$ (resp. $L'(0,\chi)$). We remark that our results including the next theorem do not depend on Beilinson's result, except for an explicit characterisation of the image of regulator maps in the first and third columns in terms of the derivative of $L$-values (cf.~Corollary~\ref{imreg}).

The part (2) on the connection between $m(f_N)$ and twisted sum of Dirichlet $L$-values is basically in line with Symth's work (cf.~Lemma~\ref{lm}). Here, we further stress an intrinsic reason why one should expect \eqref{mainid} holds. Note that our choice of $(Z,\partial A)$ is closely related to $X:=\BP^1_{\BZ[\mu_N]}$, whose regulator of the second motivic cohomology with $\BQ(2)$-coefficients is associated to the leading coefficient of $\zeta(s,X)=\prod_{\chi\in \wh{(\BZ/N\BZ)^\times}}L(s,\chi)L(s-1,\chi)$ at $s=0$ and $2$, as predicted by Beilinson--Bloch--Kato conjecture.
\end{remark} 
 
\begin{remark} 
Note that for a Dirichlet character $\chi$, to consider its $L$-value, it is enough to assume that $\cond(\chi)$ is odd or $4 \mid \cond(\chi)$. For an odd character $\chi$, we have \[\begin{cases}
r_{\cond(\chi),\chi},&\quad \text{$\cond(\chi)$ is odd} \\
r_{2\cond(\chi),\chi} ,&\quad 4|\cond(\chi)
\end{cases}\] is a simple non-zero rational multiple of $$L(0,\chi^{-1}),$$ and hence in particular is non-zero (cf.~Lemma~\ref{nzo}).
\end{remark}


\medskip

Next, we discuss a Galois structure of $H_{\CM}^2(Z,\partial A,\BQ(2))^{i=-1}$, hence the connection between $\chi$-part of the cohomology class and $L'\left(\frac{\chi(-1)-1}{2},\chi\right)$. Note that $\Aut(Z_{\BQ},\partial A_{\BQ})$ is very small so that $H_{\CM}^2(Z,\partial A,\BQ(2))^{i=-1}$ does not admit a direct $G$-action. In this regard, we 
introduce a conjecture on the linear independence of the derivative of partial Dirichlet $L$-values in the mixed sense, which is crucial for us to study the Galois module structure of $H_{\CM}^2(Z,\partial A,\BQ(2))^{i=-1}$.

Let $\CL_{-1}$ and $\CL_{0}$ be the $\BQ$-subspace of $\BR$ generated by the image of regulator pairings:
\begin{equation}\label{1.1}H_{\CM}^1(X,\BQ(2))\times H_{\text{sing},0}(X_\BR,\BQ(-1)) \ra \BR,\quad (x,y)\mapsto \pair{r_{\CD}(x),y},\end{equation}
\begin{equation}\label{1.2}H_{\CM}^1(Y',\BQ(1))\times H_{\text{sing},0}(X_\BR,\BQ) \ra \BR,\quad (x,y)\mapsto \pair{r_{\CD}(x),y}.\end{equation}
In fact, $\CL_{-\epsilon}$ has a precise $\BQ$-basis given by the derivative of partial $L$-values at $s=-\epsilon$. We refer to Corollary \ref{imreg} for an explicit characterisation.

Hence we formalise the hypothesis as follows. Let $\CL$ be the image of regulator pairing
\begin{equation}\label{1.1}H_{\CM}^2(Z,\partial A,\BQ(2))^{i=-1}\times H_{\text{sing},1}((Z,\partial A)_{\BR},\BQ(-1))^{i=-1} \ra \BR.\end{equation}
Then we have $\CL_{-1}+ \CL_0\subset \CL$ by Theorem \ref{thm:reg}.   The following hypothesis asserts the converse inclusion and splitting.
\begin{conj}\label{conj:linind}
For $N\geq 3$, we have
$\CL=\CL_{-1}\oplus \CL_{0}$.
\end{conj}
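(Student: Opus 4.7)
The plan is to separate the conjecture into two substeps: (a) the inclusion $\CL \subset \CL_{-1} + \CL_0$, which should follow from the structure already built in Theorem~\ref{thm:reg}, and (b) the direct sum property $\CL_{-1} \cap \CL_0 = \{0\}$, which carries the transcendence content. The reverse inclusion $\CL_{-1}+\CL_0\subset \CL$ is already a consequence of Theorem~\ref{thm:reg}, so nothing more is needed for it.

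For (a), I would first lift the exact sequence of Theorem~\ref{thm:reg}(1) on cohomology to a compatible exact sequence on singular homology. Concretely, excision on the $1$-dimensional curve $Z$ relative to $\partial A$ should produce
\[
0 \to H_{\text{sing},1}(Y'_\BR, \BQ) \to H_{\text{sing},1}((Z,\partial A)_\BR, \BQ(-1))^{i=-1} \to H_{\text{sing},0}(X_\BR, \BQ(-1)) \to 0,
\]
dual after the appropriate Tate twist to the Deligne row of Theorem~\ref{thm:reg}(1). Compatibility of the regulator pairing with cup product and with connecting homomorphisms then decomposes any $\langle r_\CD(c),\gamma\rangle$: the $H_{\CM}^1(X,\BQ(2))$-component of $c$ paired against the image of $\gamma$ in $H_{\text{sing},0}(X_\BR,\BQ(-1))$ contributes to $\CL_{-1}$, while the $H_{\CM}^1(Y',\BQ(1))$-quotient of $c$ paired against $\gamma|_{Y'_\BR}$ contributes to $\CL_0$. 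This yields $\CL \subset \CL_{-1}+\CL_0$.

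The main obstacle is (b). By Corollary~\ref{imreg}, the $\BQ$-bases of $\CL_{-1}$ and $\CL_0$ are, respectively, the partial $L$-values $L^{(N)'}(-1,\sigma)$ and $L^{(N)'}(0,\sigma)$; analytically these are $\BQ$-combinations of $\Im\,\mathrm{Li}_2(\zeta_N^k)$ and of $\log|1-\zeta_N^k|$. The claim is that no nontrivial $\BQ$-combination of one family equals a nontrivial $\BQ$-combination of the other. Independence within each family is classical (Dirichlet's unit theorem for $\CL_0$, Beilinson for $\CL_{-1}$), but the mixed independence is motivic--transcendental. The natural framework I would invoke is Grothendieck's period conjecture applied to the Tannakian category of cyclotomic mixed Tate motives over $\BQ(\mu_N)$: there, the weight~$2$ and weight~$4$ period subspaces share no nonzero $\BQ$-lines apart from $\BQ\cdot 2\pi i$-multiples, which after identifying the regulator images would force $\CL_{-1}\cap \CL_0 = 0$. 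Unconditionally, Baker's theorem controls the $\BQ$-linear relations among $\log|1-\zeta_N^k|$, but no analogue is available for the polylogarithmic quantities $\Im\,\mathrm{Li}_2(\zeta_N^k)$; this transcendence gap is precisely why the statement is posed as a conjecture. A reasonable partial program is to verify the conjecture numerically for small $N$ with high precision, then to establish it for prime conductors $N=p$ where the $\BQ[G]$-structure is semisimple and the $K_2$ and $K_3$ pieces decouple cleanly, and finally to descend through the cyclotomic tower using norm compatibility of the regulators.
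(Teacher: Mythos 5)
This statement is Conjecture~\ref{conj:linind}: the paper does not prove it, and says explicitly (\S\ref{subsec:linin}) that the mixed linear independence it encodes ``seems to be true but out of reach except a few instances.'' Your proposal likewise does not prove it --- in step (b) you concede that the intersection statement $\CL_{-1}\cap\CL_{0}=\{0\}$ rests on Grothendieck's period conjecture for mixed Tate motives, supplemented by numerics and a hoped-for descent argument --- so what you have written is a program, not a proof, and on the transcendence side it is consistent with the paper's own assessment.

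There is, however, a genuine error in step (a): the inclusion $\CL\subset\CL_{-1}+\CL_{0}$ does \emph{not} ``follow from the structure already built in Theorem~\ref{thm:reg}''; the paper deliberately makes it part of the conjecture (``the following hypothesis asserts the converse inclusion and splitting''). Note first that $\CL$ is the image of the pairing on the \emph{full} group $H_{\CM}^2(Z,\partial A,\BQ(2))^{i=-1}$, not merely on the span of $c$. For $x$ in the middle term and $\gamma$ a relative cycle, write $x=\alpha'(x_0)+s(\bar x)$ and $\gamma=\iota(\gamma_0)+t(\bar\gamma)$ for arbitrary $\BQ$-linear sections $s,t$ of the two exact sequences. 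Functoriality of $r_\CD$ and adjunction control only two of the resulting terms: $\langle r_\CD(\alpha'(x_0)),\gamma\rangle\in\CL_{-1}$ and $\langle r_\CD(s(\bar x)),\iota(\gamma_0)\rangle\in\CL_{0}$. The cross term $\langle r_\CD(s(\bar x)),t(\bar\gamma)\rangle$ is an extension period of the mixed object, and no formal compatibility with connecting maps places it in $\CL_{-1}+\CL_{0}$; this is exactly why the paper's splitting Lemma~\ref{li} needs hypothesis (c) --- the conjecture itself --- as input. The only unconditional statement of your type (a) that the paper establishes is the $\sharp$-restricted identity \eqref{id:weaker}, $\CL^\sharp=\CL_{-1}+\CL_{0}^\sharp$, for the subspace generated by $H_{\CM}^1(X,\BQ(2))$ and the explicit class $c$, and its proof is not formal: it proceeds by evaluating the cross terms as Mahler measures $m(x_1+x_2-2\cos\tfrac{2\pi k}{N})$ via Smyth's formula (Lemma~\ref{lm}) and rewriting them through the Euler-system relations for partial $L$-values (Lemmas~\ref{Eu} and~\ref{wex}). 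Finally, your excision sequence is wrong as stated: $Y'_\BR$ is zero-dimensional, so $H_{\text{sing},1}(Y'_\BR,\BQ)=0$; in the paper's diagram \eqref{cmm} the subobject of $H_{\text{sing},1}((Z,\partial A)_\BR,\BQ(-1))^{i=-1}$ is $H_{\text{sing},0}(X_\BR,\BQ)$, paired against the third column, with quotient $H_{\text{sing},0}(X_\BR,\BQ(-1))$.
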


We refer to \S\ref{subsec:linin} that Conjecture \ref{conj:linind} is essentially related to the linear independence of partial $L$-values in the mixed sense, which naturally seems to be true but out of reach except a few instances (e.g. see \cite{calegari2024}).

Our second result is about the refined identity for a single Dirichlet $L$-value, which follows from the Galois module structure of relevant motivic cohomology.

\begin{thmL} \label{thm:motivic}
Under Conjecture \ref{conj:linind}, 
\begin{itemize}
    \item[(1)] There exist canonical isomorphisms 
    \begin{align*} H_\CM^2(Z,\partial A, \BQ(2))^{i=-1}&\simeq H_{\CM}^1(X,\BQ(2))\oplus H_{\CM}^1(Y',\BQ(1)) \\ 
    H_\text{sing,1}((Z,\partial A)_{\BR}, \BQ(-1))^{i=-1,\circ} &\simeq H_\text{sing,0}(X_{\BR},\BQ(-1)) \oplus H_\text{sing,0}(Y_{\BR},\BQ)^{\circ} \end{align*} that are compatible with regulator pairings on them. Here, we use the upper-index $\circ$ for the quotient of singular homology by the kernel of regulator pairing.
    
\item[(2)]     In particular, there is a canonical $G$-module structure on $H_\CM^2(Z,\partial A, \BQ(2))^{i=-1}$ that is compatible with the regulator map. Hence we have \[ H_\CM^2(Z,\partial A, \BQ(2))^{i=-1}\simeq \begin{cases}\BQ[G]/<\sum_{g\in G}g>,& N\neq 4p^r\\
\BQ[G],&N= 4p^r\end{cases}\] as $G$-modules.  

\item[(3)] For each $\chi\in \wh{G}$, denote by $c^\chi$ the $\chi$-part of $c$. Then we have 
    \begin{equation}\label{chipart}r_\CD(c^\chi)=r_{N,\chi} L^{(N_\chi)'}(-\epsilon,\chi)\end{equation}
    where $\epsilon \in \{0,1\}$ with $\chi(-1)=(-1)^\epsilon$ , $N_\chi | N$ and $r_{N,\chi}\in \BQ(\chi)$ are as in Theorem \ref{thm:reg}.   
\end{itemize} 
\end{thmL}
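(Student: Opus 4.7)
The plan is to bootstrap from Theorem~\ref{thm:reg}. The short exact sequence there
\[ 0 \to H_{\CM}^1(X,\BQ(2)) \to H_{\CM}^2(Z,\partial A,\BQ(2))^{i=-1} \to H_{\CM}^1(Y',\BQ(1)) \to 0 \]
has outer terms that carry natural $G$-module structures (from the Galois action on $X$ and $Y$), but the middle term does not, since $(Z,\partial A)$ has only the involution $i$ as a symmetry. So part~(2) requires constructing a splitting, and this is where Conjecture~\ref{conj:linind} enters: the compatible diagram with regulators
reduces the splitting question to a statement about the image $\CL$ in $\BR$. The hypothesis $\CL = \CL_{-1} \oplus \CL_0$ says exactly that the two pieces of the filtration on $\CL$ (coming from degree $-1$ and degree $0$ partial $L$-values) do not interact. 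Combined with the injectivity of the three vertical regulator maps in Theorem~\ref{thm:reg}, this forces the upper row to split as well, canonically, giving the first isomorphism in (1). The corresponding statement for singular homology is dual: after modding out by the kernel of the regulator pairing, the pairing pairs $\CL_{-\epsilon}$ with $H_{\mathrm{sing},0}(X_\BR,\BQ(-1))$ (resp.\ $H_{\mathrm{sing},0}(Y_\BR,\BQ)^\circ$), so the same linear-independence hypothesis splits the homology side compatibly.

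Given the splittings, I would transport the $G$-action from the outer terms to the middle. The resulting $G$-action is independent of the choice of splitting precisely because the hypothesis guarantees uniqueness of the decomposition on the level of regulator images, and the regulators are injective. For the explicit $G$-module description in~(2), I would invoke the standard computation: $H_\CM^1(X,\BQ(2))$ is the odd part of $\BQ[G]$ (Borel/Beilinson), i.e.\ the $(c=-1)$-eigenspace, and $H_\CM^1(Y',\BQ(1))$ is the even part, modulo $\sum_{g \in G} g$ in the generic case $N \neq 4 p^r$, and the full even part in the exceptional case $N = 4 p^r$ (where the extra unit $\zeta_N + \zeta_N^{-1}$ enters). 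Adding them gives $\BQ[G]/\langle \sum g\rangle$ or $\BQ[G]$ respectively.

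For (3), I would decompose $c = \sum_\chi c^\chi$ using the $G$-action just constructed, and apply the regulator. Since the splitting is compatible with the regulator pairing, the $\chi$-part $c^\chi$ lies in the summand corresponding to the parity of $\chi$: the odd-$\chi$ parts sit in $H_\CM^1(X,\BQ(2))^\chi$ and pair against $L^{(N)'}(-1,\chi)$-type values, while even-$\chi$ parts sit in $H_\CM^1(Y',\BQ(1))^\chi$ and pair against $L^{(N)'}(0,\chi)$. An explicit characterisation of these images, recorded in Corollary~\ref{imreg}, identifies the regulator image on the $\chi$-component with a rational multiple of $L^{(N_\chi)'}(-\epsilon,\chi)$, where the twist $N_\chi$ accounts for the $4 \parallel N$ phenomenon in the even case. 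The coefficient $r_{N,\chi}$ is then read off from the global formula in Theorem~\ref{thm:reg}(2) by taking the $\chi$-projection of both sides, since summing over $\chi$ must recover the total regulator $r_\CD(c) = m(f_N^*) - m(f_N)$.

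The main obstacle is step one: showing that Conjecture~\ref{conj:linind} actually produces a \emph{canonical} splitting of the motivic exact sequence (not merely a decomposition of $\CL$). The subtle point is that a priori the middle motivic group is only a $\BQ$-vector space with an involution $i$, and one must check that the $\BQ$-linear splitting induced from the regulator image descends from $\BR$-coefficients to $\BQ$-coefficients; this uses injectivity of the middle regulator together with the fact that $\CL_{-1}$ and $\CL_0$ are defined over $\BQ$ as images of $\BQ$-structures. Once this descent is in place, $G$-equivariance and the rest of the argument follow essentially formally from the standard properties of regulators on cyclotomic fields.
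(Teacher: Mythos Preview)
Your proposal is correct and follows the paper's approach closely. The paper packages the key step---producing a unique $\BQ$-linear splitting of both rows from the hypothesis $\CL=\CL_{-1}\oplus\CL_0$---into an abstract linear-algebra lemma (Lemma~\ref{li} in Appendix~A), which constructs the section directly over $\BQ$ via the regulator pairing with rational singular homology (rather than by ``descent from $\BR$'' as you phrase it); apart from this repackaging, your outline for parts~(1)--(3) matches the paper's argument.
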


\begin{remark}
In fact, we can have Theorem \ref{thm:motivic}.(3) under a weaker hypothesis. We consider the image $\CL_{0}^{\sharp}$ of the regulator pairing \eqref{1.2} restricted to $\Im(c)\times H_{\text{sing},0}(X_\BR,\BQ)$ (See Section \ref{weakstm} for an explicit description). Under the natural assumption that $\CL_{-1}\cap \CL_{0}^{\sharp}=\{0\}$, we can still talk about $\chi$-part of $c$ and obtain the refined identity \eqref{chipart}.
\end{remark}

\subsection{Overview of the proof}

We now briefly describe our approach, which  closely follows \cite{Den:97, Bes:Den} along with appropriate adaptions for the setting introduced in Section \ref{ms}. 

The seminal arguments of \cite{Den:97} on the realisation of Mahler measure as an image of the regulator map of the associated motivic cohomology class in the Deligne cohomology, hence, yield a systematic general framework for the study of special values of $L$-functions upon suitable choices of the corresponding geometric objects; See Section \ref{sec:pf:reg} for details. Indeed, in \cite[3.10]{Bes:Den}, they concluded that there is a family of polynomials in $\BQ[x_1,x_2]$ associated with an elliptic curve $E/\BQ$ whose Mahler measure is precisely given by $L'(0,E)$ as an image of the regulator map under CM and technical conditions.  

Here, we determine corresponding objects for the Dirichlet case. We present a family of polynomials $f_N$ in $\BZ[x_1,x_2]$ associated with $N$-th cyclotomic polynomial whose Mahler measure is expressed in terms of a linear combination of special $L$-value of mod $N$ Dirichlet characters at $s=0,-1$ in Theorem \ref{thm:reg}. In this $\GL_1$ situation, we remark that the image of regulator map is not directly given by a single $L$-value due to Galois conjugate structure in the exact sequences.

Theorem \ref{thm:motivic} then shows a core technical step to extract the single Dirichlet $L$-value by taking $\chi$-part of the constructed class attached to $f_N$ in motivic cohomology $H_\CM^2$. To do so, we need to assume Conjecture \ref{conj:linind} on linear independence among partial $L$-values, which enables us to obtain a Galois action at the level of Deligne cohomology. Hence we have an explicit way to produce the elements and confirm the identities that match with Beilinson's conjecture; See Section \ref{remap} and Proposition \ref{partind}.

In the subsequent work, we continue to explore this direction. From the cyclotomic field instance in the present paper, it seems that a refinement of general Beilinson's conjecture on the cyclicity of motivic cohomology under finite abelian extension should hold. Further, there should be an explicit generator in motivic cohomology so that it maps to the partial derivative $L$-values with the Euler factor removed under the Beilinson regulator map. When this stronger conjecture holds, we expect to have the existence of Euler system that could be read from the corresponding regulator map and linear independence among partial $L$-values. Such phenomena seem to match with known examples.

\begin{remark}

There has been a wealth of previous studies on finding the families of polynomials whose Mahler measure can be related to special values of Dirichlet $L$-functions. For instance, we refer to Boyd and Rodriguez-Villegas \cite{Boyd:Rod, Ville} and Guilloux--Marche \cite{guil:mar}, where the proofs are based on the evaluation of Bloch-Wigner dilogarithm, which can be written as linear combination of Dirichlet $L$-values at $s=-1$ for odd characters. These results can be viewed as numerical evidences towards Chinburg's conjecture, while our work follows cohomological methods thanks to Deninger and is towards Iwasawa theoretic applications.  

We end with a brief exposition to the question imposed by Chinburg in his unpublished note \cite{Chinburg}: Let $\chi$ be a quadratic character such that $L(s,\chi)$ has a first-order zero at $s=-n$, where $n \in \BN$. Then there exists a non-zero polynomial $f \in \BZ[x_1^\pm, \cdots, x_{n+1}^\pm]$ such that 
\begin{equation} \label{eq:chin}
m(f)= r_\chi \cdot L'(-n,\chi) \end{equation} for some $r_\chi \in \BQ$. A generalisation of this conjecture to all odd primitive characters can be found in \cite[Conjecture 1.7]{Bertin} with $L'(-n,\chi)$ replaced by its real part. These remain completely open, however, we have a wide collection of supporting progress including the aforementioned works.  

Our Theorem \ref{thm:reg} and \ref{thm:motivic} suggest that we may also obtain a Chinburg-type identity \eqref{eq:chin} with a polynomial $f_N^\chi \in \BZ[x_1^\pm, x_2^\pm]$ corresponding to the $\chi$-part of motivic cohomology class and with $r_\chi \in \BQ(\chi)$ under Conjecture \ref{conj:linind}. However, it seems that the polynomial $f_N^\chi$ and constant $r_\chi$ cannot be taken from $\BZ$-coefficient ring and $\BQ$ respectively as follows. Let $F$ and $K$ be the number fields, and set \[M_n(K,F):=\{k\cdot m(f): k\in K, f\in F[x_1,\cdots,x_n]\}\subset \BC.\] 
For $N \in \mathbb{N}$, let $K_N$ be the number field generated by the image of all Dirichlet characters modulo $N$.
Then it is not hard to see from Jensen's formula and special value formula that for an even $\chi$, we have
\[L'(0,\chi)\in M_1(K_N,\BQ(\mu_N)^+) .\] 
Similarly, for an odd $\chi$, our statement seems to suggest a variant of the conjecture for all primitive characters with a larger coefficient field
\begin{equation}\label{conj}L'(-1,\chi)\in M_2(K_N,\BQ(\mu_N)^+) .
\end{equation}

To support \eqref{conj}, we give the following numerical reasoning. Let $\chi_3$ be a unique primitive odd Dirichlet character modulo $3$,
then it follows from Theorem \ref{thm:reg} that 
\[m(f_3)=L'(-1,\chi_3)\] hence \eqref{conj} holds for $N=3$. Note that $\chi_3$ is quadratic, which is the case that matches with Chinburg's conjecture.

Let $\chi_5$ be an odd Dirichlet character modulo $5$ such that $\chi_5(2)=i$. It then follows that 
\[-m(f_5)\equiv \frac{3-i}{10}L'(-1,\chi_5)+\frac{3+i}{10}L'(-1,\chi_5^3)\pmod{M_1(K_5,\BQ(\mu_5)^+)}\]
\[-m(f_{10})\equiv \frac{1-i}{10}L'(-1,\chi_5)+\frac{1+i}{10}L'(-1,\chi_5^3)\pmod{M_1(K_{5},\BQ(\mu_5)^+)}\]
hence \eqref{conj} holds for $N=5$.

Let $\chi_{15}=\chi_3\chi_5^2$ be the unique primitive odd character modulo $15$, then it follows that
\[-m(f_{15})\equiv\frac{1}{15}L'(-1,\chi_{15})-\frac{3}{5}L'(-1,\chi_3)-\frac{-1+i}{6}L'(-1,\chi_5)-\frac{-1-i}{6}L'(-1,\chi_5^3)\] 
modulo $M_1(K_{15},\BQ(\mu_{15})^+)$, in turn \eqref{conj} holds for $N=15$.
\end{remark}

\begin{remark}
Another aspect of the Mahler measure can be found in the context of dynamical system. For a $\BZ^n$-action $\alpha$ on abelian group $X$ defined by a morphism $\BZ^n \ra \mathrm{Aut}(X)$, Lind--Schmidt--Ward \cite{LSW:90} showed that the topological entropy $h(\alpha)$ is given by the Mahler measure of associated polynomial with the integral coefficients. In particular, for $f \in \BZ[x_1^\pm, \cdots, x_n^\pm]$ and  $X=\BZ[x_1^\pm, \cdots, x_n^\pm] / (f)$, they have $h(\alpha)=m(f)$. 

A priori, there is no clue for what kind of algebraic properties the topological entropy admits in general, as it arises from metric structures of the dynamical system. However, it seems like a natural and interesting question in relation with fractal dimension theory. Our Theorem \ref{thm:reg} and \ref{thm:motivic} give an algebraic characterisation for the entropy of the associated $\BZ^n$-action in cohomological terms with a closed form in terms of Dirichlet $L$-values, which are conjectured to be  transcendental. Based on this, we plan to study arithmetic properties of the dynamical invariants in the future. 
\end{remark}

\subsubsection*{Organisation} 

In Section \ref{sec:pf:reg}, we recall some preliminaries about Deligne and motivic cohomology, regulator map, work of Deninger and prove Theorem \ref{thm:reg}. In Section \ref{sec:pf:motivic}, we obtain Theorem \ref{thm:motivic}. We also present some explicit calculations of the partial $L$-values and Mahler measure of cyclotomic polynomials in the Appendix.

\subsubsection*{Acknowledgements}

This collaboration started during the TSIMF Workshop: Special values of $L$-functions, took place in Sanya, January 2024. We are grateful to the organisers for the opportunity to spend an intensive week. We thank Gunther Cornelissen for helpful chats, in particular introducing us to the Chinburg conjecture. We also thank anonymous referee for insightful comments that widen our overall understanding of accompanying contexts.

 \section{Mahler measure and regulators} \label{sec:pf:reg}

In this section, we recall some basics of the motivic and Deligne cohomology, regulator maps, and explain the work of Deninger. We then present the proof of Theorem \ref{thm:reg}.

\subsection{Cohomology groups} \label{sec2:prelim}

We closely follow the notations and conventions in Beilinson \cite[Chap.~1]{beilinson1984higher} and Nekovar \cite[Sec.~7]{nekovar1994beilinson} in the following.

\subsubsection{Motivic cohomology}\label{motco}

Let $X$ be a regulor scheme. We recall that the motivic cohomology \[H_{\CM}^j(X,\BQ(i))\] is defined by \[(K_{2i-j}(X)\otimes\BQ)^{(i)},\] where $K_n(X)$ is the $n$-th $K$-group of $X$ introduced by Quillen and $(K_n(X)\otimes_{\BZ} \BQ)^{(i)}\subset K_n(X)\otimes_{\BZ} \BQ $ denotes the subspace on which the Adams operators $\Psi^m$, $m\in \BZ_{>0}$, act by $m^i$.

The product structure on $K$-group induces a cup product:
\[\cup: H_{\CM}^i(X,\BQ(j))\times H_{\CM}^{i'}(X,\BQ(j'))\ra H_{\CM}^{i+i'}(X,\BQ(j+j')) \]
Similarly, one can define the relative $K$-group, relative motivic cohomology and long exact sequence of relative motivic cohomologies.

\subsubsection{Deligne Cohomology}

Let $X$ be a complex algebraic manifold. The Deligne cohomology $H_{\CD}^{\tiny{\bullet}}(X,\BR(n)_{\CD})$ 
is defined as a hypercohomology group of suitable complex of sheaves of abelian groups  $\BR(n)_{\CD}$. If $X$ is proper, then we simply have
 \[\BR(n)_{\CD}=[\BR(n)\ra\CO_X\ra\cdots\ra\Omega_{X}^{n-1}],\] where $\Omega_X^i$ is the sheaf of holomorphic differential $i$-forms on $X$.
 
We recall that the Deligne cohomology fits into the following exact sequences:
\[\cdots\ra H_{\text{sing}}^i(X,\BR(n))\ra H_{\dR}^i(X)/F^n\ra H_{\CD}^{i+1}(X,\BR(n))\ra H_{\text{sing}}^{i+1}(X,\BR(n))\ra\cdots. \]
In particular, for $n>i+1$, \begin{equation} \label{eq:de:sing}
H_{\CD}^{i+1}(X,\BR(n))\simeq H_{\text{sing}}^i(X,\BC/\BR(n)).\end{equation}

By Hironaka's theorem on the resolution of singularities, there exists a compactification $j: X\ra \ov{X}$ of $X$ such that $D:=\ov{X}\bs X$ is a divisor with normal crossings.
Then the Deligne cohomology admits an explicit description: 
\[H_{\CD}^n(X,\BR(n))=\frac{\{\varphi\in A^{n-1}(X,\BR(n-1))\ |\ d\varphi=\pi_{n-1}(\omega), \omega\in \Omega_{\ov{X}}^n(\log D)\}}{dA^{n-2}(X,\BR(n-1))},\]where $A^{i}(X,\BR(j))$ is the $i$-forms on $X$ with coefficients in $\BR(j)$, $\pi_{i-1}:\BC\ra \BR(i-1)$ is the projection along $\BR(i)$ and $\Omega_{\ov{X}}^i(\log D)$ is the sheaf of meromorphic differential $i$-forms on $\ov{X}$ at most with logarithmic singularity along $D$. 

Under this identification, a cup product \[\cup: H_{\CD}^i(X,\BR(i))\times H_{\CD}^j(X,\BR(j))\ra H_{\CD}^{i+j}(X,\BR(i+j)) \] is explicitly given by 
\[\varphi_i\cup \varphi_j=\varphi_i\wedge \pi_j \omega_{\varphi_j}+(-1)^{i}\pi_i \omega_{\varphi_i}\wedge\varphi_j,\]where $\pi_{j-1} (\omega_{\varphi_j})=d\varphi_j$.

If $X$ is a real algebraic manifold, we define the Deligne cohomology with coefficients in $\BR(n)$ by
\[H^i_{\CD}(X_{\BR},\BR(n)):=H_{\CD}^i(X_{\BC},\BR(n))^+,\]where $+$ means that we take the fixed elements under $F_\infty^*\circ (\text{complex conjugation})$, where $F_\infty$ is the anti-holomorphic involution on $(\ov{X}_{\BC},D_\BC)$. One could also consider the relative cohomology in a similar manner.

\begin{example}
Let $F$ be a number field and $X=\Spec F$ viewed as a $\BQ$-scheme. Then for $n>1$, we have 
    \[H_{\CD}^1(X_{\BR},\BR(n))=\left(\prod_{\sigma:F\ra \BC}\BR(n-1)\right)^+=\begin{cases}
        \BR(n-1)^{r_2},&\quad \text{if $n$ is even},\\
        \BR(n-1)^{r_1+r_2},&\quad \text{if $n$ is odd},
    \end{cases}\]
 where $r_1$ (resp. $r_2$) is the number of real embeddings (resp. pairs of complex embeddings).
 \end{example}

\subsubsection{Regulator map}\label{remap}
Let $X$ be a regular scheme over $\BR$. Beilinson introduced a regulator map 
\[r_{\CD}: H_{\CM}^i(X_{\BR},\BQ(j))\ra H_{\CD}^i(X_{\BR},\BQ(j))\]  which induced by the Chern characters. Note that the regulator map is compatible with cup product by the multiplicativity of Chern characters, and also compatible with a long exact sequence of relative cohomology groups. 

Suppose now that $X$ is defined over $\BZ$ or $\BQ$, then one defines the regulator map by the natural composition:
\[ H_{\CM}^i(X,\BQ(j))\ra H_{\CM}^i(X_{\BR},\BQ(j))\xrightarrow{r_\CD} H_{\CD}^i(X_{\BR},\BQ(j))\] and we may still denote it by the notation $r_{\CD}$.  

We remark that there are two rational structures of $H_{\CD}^i(X_{\BR},\BR(j))$. One comes from $H_{\CM}^i(X,\BQ(j))$ under $r_D$, and the other comes from the singular cohomology with rational coefficients. The Beilinson conjecture predicts that 
 \[\ord_{s=i-j}L(s, X)=\dim_{\BR}{H_{\CD}^{i}(X_{\BR},\BR(j))}\]
 and the determinant of the above two rational structures is essentially equal to the leading coefficients of $L(s, X_{})$ at $m=i-j$, whenever $m<\frac{i-1}{2}$ is critical in the sense of Deligne. There are generalisations of the conjecture for special $L$-values at other integers.

\subsubsection{Work of Deninger}

We are now ready to summarise the work of Deninger \cite{Den:97} on the interpretation of the Mahler measure as an image of the regulator map of the corresponding motivic cohomology class in the Deligne coholomogy.

Let $K=\BR$ or $\BC$ and let $0 \neq f \in K[x_1, \cdots, x_n]$ be a polynomial. Let $Z$ be the zero locus of $f$ in $\BG_{m,\BC}^n$. Set $B=\{z\in \BC\ |\ |z|\leq 1\}$ and $T^r:=(S^1)^r\subset \BC^r$ be the real $r$-torus.
\begin{thm}\cite[Thm.~3.4]{Den:97}\label{Den1}
Suppose that the following conditions hold:
\begin{itemize}
    \item [(i)] The divisor of $f$ in $\BG_m^{n-1} \times \BA^1$ over $\BC$ has no multiple components, and $f^*(x_1, \ldots, x_{n-1})\\ := f(x_1, \ldots, x_{n-1}, 0)\in \BC[x_1,\cdots,x_{n-1}]$ does not vanish on $T^{n-1}$.
    \item [(ii)]The set $A$, consisting of $(n-1)$-dimensional connected components of $\ov{(T^{n-1} \times B^\circ) \cap Z}$, is a compact real submanifold of $ Z^{\mathrm{reg}}$ with a boundary.
\end{itemize}
Then,
\begin{itemize}
    \item [(1)]Under the natural pairing
\[
\langle \cdot, \cdot \rangle : H^n_{\CD}(Z^{\mathrm{reg}}_K, \BR(n)) \times H_{\text{sing}, n}(Z^{\mathrm{reg}}_K, \BR(-n)) \to \BR,
\]  
we have
\[
m(f^*) - m(f) = \left\langle [\varepsilon_1] \cup \cdots \cup [\varepsilon_n],\ [A] \otimes (2\pi i)^{1-n} \right\rangle,
\]  
where \( \varepsilon_i = \log |z_i| \in H^1_{\CD}(Z^{\mathrm{reg}}_K, \BR(1)) \).
\item [(2)] Further if $f \in \BQ[x_1, \ldots, x_n]$, then we set $\BQ$-model $Z_{\BQ}$ of $Z$.  Then the cup product \[\{x_1, \ldots, x_n\}:=x_1\cup\cdots\cup x_n\in H^n_{\CM}(Z_{\BQ}^{\mathrm{reg}}, \BQ(n))\] maps under the regulator to the cup product class $[\varepsilon_1] \cup \cdots \cup [\varepsilon_n]$, and we obtain
\[
m(f^*) - m(f) = \left\langle r_{\CD}\{x_1, \ldots, x_n\},\ [A] \otimes (2\pi i)^{1-n} \right\rangle.
\]  

\end{itemize}

\end{thm}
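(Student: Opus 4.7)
The plan is to establish part (1) by an explicit differential-form computation of the cup product $[\varepsilon_1]\cup\cdots\cup[\varepsilon_n]$ in the de Rham-style model of Deligne cohomology recalled in Section \ref{sec2:prelim}, then to restrict this form to $A$ and unfold the resulting integral via Jensen's formula on the last circle to produce $m(f^{\ast})-m(f)$. Part (2) will then be a formal consequence of the identification $H^{1}_{\CM}(U,\BQ(1))=\CO(U)^{\times}\otimes\BQ$, the standard fact that the Beilinson regulator in weight one sends a unit $x$ to $\log|x|$, and multiplicativity of $r_{\CD}$ under cup product.

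For part (1), I would first represent each class $[\varepsilon_i]\in H^{1}_{\CD}(Z^{\mathrm{reg}},\BR(1))$ by the $0$-form $\varepsilon_i=\log|z_i|$ with associated holomorphic form $\omega_i=dz_i/z_i$, noting that $d\varepsilon_i=\pi_0(\omega_i)$. Iterating the cup-product formula from Section \ref{sec2:prelim} produces a closed $(n-1)$-form $\eta$ representing $[\varepsilon_1]\cup\cdots\cup[\varepsilon_n]$ as an alternating sum of terms of the shape $\log|z_j|\cdot\bigwedge_{k\neq j}\pi_{\ast}(dz_k/z_k)$, the standard Bloch--Deligne regulator form. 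Restricting $\eta$ to $A\subset T^{n-1}\times B^{\circ}$, every term with $j<n$ vanishes because $|z_j|=1$ on $T^{n-1}$ forces $\log|z_j|=0$, while each projected factor $\pi_{\ast}(dz_k/z_k)$ for $k<n$ collapses to $i\,d\arg z_k$. What remains is
\[
\eta|_{A}=c_n\cdot \log|z_n|\,d\arg z_1\wedge\cdots\wedge d\arg z_{n-1}
\]
for an explicit constant $c_n$ involving a power of $2\pi i$, so the pairing becomes $\langle\eta,[A]\otimes(2\pi i)^{1-n}\rangle=\int_{A}\log|z_n|\,\prod_{k<n}\frac{dz_k}{2\pi i\,z_k}$.

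Next I would fiber over $T^{n-1}$. Condition (i) guarantees that $f^{\ast}\neq 0$ on $T^{n-1}$, so the roots $\alpha_i(x')$ of $f(x',\cdot)$ with $|\alpha_i(x')|<1$ form the fiber of the projection $\pi\colon A\to T^{n-1}$, $(z_1,\dots,z_n)\mapsto(z_1,\dots,z_{n-1})$, and condition (ii) ensures $A$ is a smooth submanifold so the integration is well defined. Writing $f=a_d(x')\prod_i(x_n-\alpha_i(x'))$ and $f^{\ast}(x')=(-1)^d a_d(x')\prod_i\alpha_i(x')$, Jensen's formula on the last circle gives
\[
\int_{T^1}\log|f(x',z)|\,\frac{dz}{2\pi i\,z}=\log|f^{\ast}(x')|-\sum_{|\alpha_i(x')|<1}\log|\alpha_i(x')|.
\]
Integrating against $\prod_{k<n}dz_k/(2\pi i\,z_k)$ over $T^{n-1}$ and subtracting $m(f^{\ast})$ yields $m(f)-m(f^{\ast})=-\int_A\log|z_n|\,\prod_{k<n}\frac{dz_k}{2\pi i\,z_k}$ via the fiberwise description of $\pi$, which matches the pairing computed above and proves (1).

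For part (2), the identification $H^{1}_{\CM}(U,\BQ(1))=\CO(U)^{\times}\otimes\BQ$ and the standard formula $r_{\CD}(x)=[\log|x|]$ in weight one show $r_{\CD}(x_i)=[\varepsilon_i]$ for the coordinate units $x_i\in\CO(Z^{\mathrm{reg}}_{\BQ})^{\times}$, and multiplicativity of $r_{\CD}$ under cup product (Section \ref{remap}) then gives $r_{\CD}\{x_1,\dots,x_n\}=[\varepsilon_1]\cup\cdots\cup[\varepsilon_n]$, so (2) is immediate from (1). The main obstacle is entirely combinatorial: carrying out the iterated cup product in the first step while keeping careful track of signs, the action of the projections $\pi_j\colon\BC\to\BR(j)$, the orientation of $A$, and the power of $2\pi i$ coming from the twist $(2\pi i)^{1-n}$, so that no spurious constant survives in the final equality. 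Everything else (Jensen's formula, the fiberwise integration, and multiplicativity of the regulator) is entirely standard.
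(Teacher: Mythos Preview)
The paper does not supply its own proof of this statement: Theorem~\ref{Den1} is quoted verbatim from \cite[Thm.~3.4]{Den:97} and used as a black box, so there is no in-paper argument to compare against. Your proposal is essentially a faithful outline of Deninger's original proof---explicit computation of the iterated cup product in the de Rham model, vanishing of the $\log|z_j|$ terms for $j<n$ upon restriction to $A\subset T^{n-1}\times\overline{B^\circ}$, fiberwise application of Jensen's formula over $T^{n-1}$, and multiplicativity of $r_{\CD}$ for part~(2)---and is correct in substance; the only real hazard is, as you note, the bookkeeping of signs, orientations, and the twist $(2\pi i)^{1-n}$.
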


\begin{thm}\cite[Prop.~3.6]{Den:97}\label{Den2}
Let \( 0 \neq f \in \BQ[x_1, x_2] \) satisfy the assumptions of Theorem \ref{Den1} with \( n = 2 \), and suppose that the boundary points \( \partial A \subset T^2 \cap Z_\BC \) are roots of unity. Let $Z_{\BQ}$ and $A_{\BQ}$ be the $\BQ$ model of $Z$ and $\partial A$. Then the symbol \( \{x_1, x_2\} \) lies in the relative motivic cohomology group
\[
H^2_{\CM}(Z_{\BQ}^{\mathrm{reg}}, \partial A_{\BQ}; \BQ(2)),
\]  
and under the regulator map we have
\[
m(f^*) - m(f) = \left\langle r_{\CD}\{x_1, x_2\},\ [A] \otimes (2\pi i)^{-1} \right\rangle,
\]  
where the pairing is taken in the relative Deligne cohomology:
\[
H^2_{\CD}((Z^{\mathrm{reg}}, \partial A)_{\BR}, \BR(2)) \times H_1((Z^{\mathrm{reg}}, \partial A)_{\BR}, \BZ(-1)) \to \BR.
\]
\end{thm}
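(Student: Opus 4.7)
The plan is to bootstrap from Theorem~\ref{Den1} (Den1) into the relative setting by exploiting the roots-of-unity hypothesis on $\partial A$: this condition guarantees that the cup product and its regulator lift compatibly to the cohomology of the pair $(Z^{\mathrm{reg}}, \partial A)$, and that the resulting pairing recovers the same quantity $m(f^*) - m(f)$ already computed by Den1.

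First, for the motivic lift, I would examine the long exact sequence of the pair,
\[H^1_{\CM}(\partial A_{\BQ}, \BQ(2)) \to H^2_{\CM}(Z_{\BQ}^{\mathrm{reg}}, \partial A_{\BQ}; \BQ(2)) \to H^2_{\CM}(Z_{\BQ}^{\mathrm{reg}}, \BQ(2)) \to H^2_{\CM}(\partial A_{\BQ}, \BQ(2)),\]
and show that the image of $\{x_1, x_2\}$ in the rightmost group vanishes. Since $\partial A_{\BQ}$ is zero-dimensional, that group is a direct sum of weight-$2$ parts of $K_2$ of number fields tensored with $\BQ$; by Garland's finiteness theorem for $K_2$ of rings of integers, combined with the tame-symbol localization sequence (whose residues land in finite groups $k(v)^\times$), this sum vanishes after $\otimes\BQ$. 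A hands-on corroboration is that each $x_j|_{\partial A}$ is a root of unity, so the restricted Milnor symbol is torsion in $K_2$ by the classical Steinberg and Bass--Tate relations. Hence $\{x_1, x_2\}$ admits a lift to the relative motivic cohomology.

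Next, to pin down a compatible Deligne representative, I would use the explicit form model recalled in Section~\ref{sec2:prelim}: the Beilinson regulator sends $\{x_1, x_2\}$ to the class of the form
\[\eta(x_1, x_2) \;=\; \log|x_1|\, d\arg x_2 \,-\, \log|x_2|\, d\arg x_1\]
(up to normalisation and sign). Because $|x_j| \equiv 1$ on $\partial A \subset T^2$, this form restricts to zero on $\partial A$, hence naturally represents a class in the relative Deligne cohomology $H^2_{\CD}((Z^{\mathrm{reg}}, \partial A)_{\BR}, \BR(2))$ whose absolute image coincides with the Deligne class appearing in Den1. Since $\partial[A] = \partial A$, the chain $A$ defines a genuine relative cycle in $H_1((Z^{\mathrm{reg}}, \partial A)_{\BR}, \BZ(-1))$, and the relative pairing is computed by the convergent integral $\frac{1}{2\pi i}\int_A \eta(x_1, x_2)$, which by Den1 equals $m(f^*) - m(f)$.

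The main obstacle is the compatibility between the motivic lift and the Deligne lift. The relative motivic class is determined only modulo the image of $H^1_{\CM}(\partial A_{\BQ}, \BQ(2))$, and a priori such boundary contributions could pair nontrivially with $[A]$ through $\partial[A] = \partial A$. The resolution is that the explicit form $\eta(x_1, x_2)$ singles out a preferred Deligne lift, and, by naturality of the Beilinson regulator with respect to the long exact sequences of the pair, any motivic lift maps to this preferred class modulo classes whose form representatives vanish on $\partial A$. Carefully tracking the Chern-character/Beilinson model of the regulator through the boundary restriction — and using the vanishing of $\log|\zeta|$ for roots of unity $\zeta$ to kill the boundary ambiguity — is the technical core of the argument, after which the identification with Den1 is immediate by Stokes' theorem applied to $A$.
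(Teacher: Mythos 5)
The paper does not actually prove this statement---it is quoted from Deninger \cite[Prop.~3.6]{Den:97}---so your attempt can only be measured against Deninger's construction, which the paper reproduces in Section \ref{ms} and Proposition \ref{exactseq}. Your first step (existence of a motivic lift via the long exact sequence of the pair, $H^2_{\CM}(\partial A_\BQ,\BQ(2))=0$ by Garland, torsionness of symbols in roots of unity) is fine and matches the paper's own use of these facts in the proof of Proposition \ref{exactseq}. The genuine gap is your handling of the ambiguity of the lift. You correctly note that the relative class is determined only modulo the image of $H^1_{\CM}(\partial A_\BQ,\BQ(2))$, but your proposed resolution---that this ambiguity pairs trivially with $[A]$ because $\log|\zeta|=0$ for roots of unity and because $\eta(x_1,x_2)$ ``restricts to zero on $\partial A$''---fails. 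By compatibility of the pairing with the connecting maps, a boundary class $\delta\beta$ with $\beta\in H^1_{\CM}(\partial A_\BQ,\BQ(2))$ pairs with $[A]\otimes(2\pi i)^{-1}$ as $\pm\langle r_{\CD}(\beta),[\partial A]\otimes(2\pi i)^{-1}\rangle$, and $H^1_{\CM}(\partial A_\BQ,\BQ(2))$ is rationally $K_3$ of cyclotomic rings, whose regulator against the boundary points spans exactly the space $\CL_{-1}$ of derivatives $L'(-1,\cdot)$---nonzero, and indeed the source of half of the paper's main formula (cf.\ Proposition \ref{exactseq} and Corollary \ref{imreg}). The vanishing $\log|\zeta|=0$ controls only the weight-one boundary group $H^1_{\CD}(\partial A_\BR,\BR(1))$, not the weight-two group $H^1_{\CD}(\partial A_\BR,\BR(2))$ relevant here. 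Moreover, since $\partial A$ is zero-dimensional, the restriction of the $1$-form $\eta$ to $\partial A$ is vacuously zero, so it carries no information: a class in $H^2_{\CD}((Z^{\mathrm{reg}},\partial A)_\BR,\BR(2))$ consists of a form on $Z^{\mathrm{reg}}$ \emph{together with} trivializing data on the boundary, and changing that data shifts the pairing with $[A]$ by precisely the ambiguity you are trying to dismiss.

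The correct resolution---the actual content of Deninger's proposition, and the route the paper takes to construct its class $c$---is to produce the relative lift canonically in weight one \emph{before} cupping: since $x_j|_{\partial A}$ is a root of unity it dies in $H^1_{\CM}(\partial A,\BQ(1))$ after tensoring with $\BQ$, and the vanishing of $H^0_{\CM}(\partial A,\BQ(1))\otimes\BQ$ makes the lift $c_j\in H^1_{\CM}(Z,\partial A,\BQ(1))$ \emph{unique}; the relative symbol is then the cup product $c_1\cup c_2$, canonical by construction, and the identity with $m(f^*)-m(f)$ follows by applying the Stokes-type computation of Theorem \ref{Den1} to this specific representative. Without this step, the pairing $\langle r_{\CD}\{x_1,x_2\},[A]\otimes(2\pi i)^{-1}\rangle$ in your argument is not even well defined.
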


\subsection{Proof of Theorem \ref{thm:reg}}

Along with preliminaries we recall in Section \ref{sec2:prelim}, we present the proof of our first result Theorem \ref{thm:reg}. The main point to is to make an appropriate choice of the polynomial that captures the Dirichlet $L$-values under the motivic realisations.

\subsubsection{Mahler measure and regulator maps}\label{S:randm}
Let $N \geq 3$ be an integer, and $f:=f_N$, $Z$, $A$, $c$ be as in Theorem \ref{thm:reg}.

\begin{prop}  \label{prop:lincomb}
Suppose that $N\neq 3,6$. We have the regulator map
\[r_{\CD}: H_{\CM}^2(Z,\partial A,\BQ(2))\xrightarrow{r_\CD} H_{\CD}^2((Z,\partial A)_{\BR},\BR(2))\xrightarrow{\frac{1}{2\pi i}\int_{A}} \BC\]
which satisfies  
\begin{align*} 
r_\CD(c)&=m(f^*)-m(f) \\
&=\sum_{\chi\in \wh{G}}r_{N,\chi} L^{(N_\chi)'}(-\epsilon,\chi) 
\end{align*}
where $\epsilon\in \{0,1\}$ with $\chi(-1)=(-1)^{\epsilon}$, $N_\chi\mid N$ and $r_{N,\chi}\in \BQ(\chi)$ are as in Corollary \ref{exp}.
\end{prop}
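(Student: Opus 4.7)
The plan is to split the proposition into two stages: first, the identification $r_\CD(c) = m(f_N^*) - m(f_N)$ via Deninger's Theorem \ref{Den2}, and second, the evaluation of this Mahler-measure difference as the prescribed linear combination of partial $L$-derivatives via iterated Jensen's formula together with a Bloch--Wigner/Clausen computation. The $\chi$-decomposition and the precise constants $r_{N,\chi}$ will come from Dirichlet orthogonality and form the content of the auxiliary Corollary \ref{exp}.

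For the first equality I will verify the hypotheses of Theorem \ref{Den2} with $n=2$ for $f = \Psi_N(x_1 + x_2)$. Irreducibility of $\Psi_N$ over $\BZ$ ensures that $f$ has no multiple divisor components on $\BG_m \times \BA^1$. The reduced polynomial $f^*(x_1) = \Psi_N(x_1)$ has real roots $2\cos(2\pi k/N)$ with $\gcd(k, N) = 1$, and such a number has modulus $1$ precisely when $N \in \{3, 6\}$; the standing hypothesis $N \neq 3, 6$ therefore forces $f^*$ to be non-vanishing on $T^1$. On each component of $Z(\BC)$ cut out by $x_1 + x_2 = \alpha$ for a real root $\alpha = 2\cos\phi$ of $\Psi_N$, the locus $\{|x_1| = 1,\ |x_2| \leq 1\}$ is a compact arc parametrised by $x_1 = e^{i\theta}$ with $|\alpha - e^{i\theta}| \leq 1$; its two endpoints satisfy $|x_1| = |x_2| = 1$ and $x_1 + x_2 = \alpha$, so they are conjugate pairs of primitive $N$-th roots of unity. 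This confirms the compact manifold-with-boundary condition on $A$ and the root-of-unity condition on $\partial A$, so Theorem \ref{Den2} applies and yields $r_\CD(c) = m(f_N^*) - m(f_N)$.

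For the explicit evaluation, iterated Jensen gives $m(f_N^*) = \sum_\alpha \log^+\!|\alpha|$ and $m(f_N) = \sum_\alpha \frac{1}{2\pi}\int_0^{2\pi}\log^+\!|\alpha - e^{i\theta}|\,d\theta$, where $\alpha$ runs over real roots of $\Psi_N$; combining these with the pointwise identity $\frac{1}{2\pi}\int_0^{2\pi}\log|\alpha - e^{i\theta}|\,d\theta = \log^+\!|\alpha|$ collapses the difference to
\[ m(f_N^*) - m(f_N) \;=\; \sum_\alpha \frac{1}{2\pi}\int_{|\alpha - e^{i\theta}|<1} \log|\alpha - e^{i\theta}|\,d\theta, \]
which is exactly the direct regulator integral $\frac{1}{2\pi i}\int_A \eta(x_1, x_2)$ with $\eta = \log|x_1|\,d\arg x_2 - \log|x_2|\,d\arg x_1$ (using $\log|x_1| = 0$ on $A$ and $x_2 = \alpha - x_1$). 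To translate this into $L$-derivatives, I use the symbol manipulation $\{x_1, \alpha - x_1\} = \{x_1, \alpha\} + \{x_1/\alpha,\ 1 - x_1/\alpha\}$: the first summand integrates over the arc to $\log|\alpha| = \log|2\cos(2\pi k/N)|$ times an arc-length, which expands as a $\BQ$-linear combination of cyclotomic-unit logarithms $\log|1 - \zeta_N^a|$ and so, via the formula $L^{(N)'}(0, \sigma_a) = -\log|(1 - \zeta_N^a)(1 - \zeta_N^{-a})|$, contributes the even-character part $\sum_{\chi\text{ even}} r_{N,\chi} L^{(N)'}(0, \chi)$; the Steinberg summand integrates to a Bloch--Wigner dilogarithm value on the unit circle, i.e., a Clausen value $\mathrm{Cl}_2(2\pi k/N) = \sum_{n\geq 1}\sin(2\pi kn/N)/n^2$, matching $L^{(N)'}(-1, \sigma_k) = \frac{N}{4\pi}\mathrm{Cl}_2(2\pi k/N)$ and contributing the odd-character part $\sum_{\chi\text{ odd}} r_{N,\chi} L^{(N)'}(-1, \chi)$. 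Dirichlet orthogonality then assembles these partial derivatives into the claimed $\chi$-linear combination.

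The principal obstacle is the conductor bookkeeping responsible for the shift $N_\chi$: for even $\chi$ with $4 \parallel N$ and $\cond(\chi) \mid N/4$, the natural partial expression at level $N$ carries an extra Euler factor at $2$ that must be removed to produce the primitive value $L^{(N_\chi)'}(-\epsilon, \chi)$ at $N_\chi = N/4$. Handling this requires a careful separation of the cyclotomic-unit terms $\log|1 - \zeta_N^a|$ according to the $2$-part of the Galois orbit of $a$ in $(\BZ/N\BZ)^\times$, and extracting the precise constants $r_{N,\chi} \in \BQ(\chi)$ from the resulting character sums is exactly the content of Corollary \ref{exp}, to which the proof of the second equality reduces.
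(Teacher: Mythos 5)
Your proposal is correct and follows the same two-stage skeleton as the paper: the paper's own proof of this proposition consists precisely of checking that $f_N$ meets the hypotheses of Theorems \ref{Den1} and \ref{Den2} (you carry this out in more detail than the text does --- your endpoint computation $x_1=\zeta_N^{\pm k}$, $x_2=\zeta_N^{\mp k}$ and the observation that $|2\cos(2\pi k/N)|=1$ exactly when $N\in\{3,6\}$ are the substance behind the paper's one-line assertion), and then delegating the explicit $L$-value combination to Corollary \ref{exp}, exactly as you delegate it. The only genuine divergence is how the per-root evaluation is reached: the paper cites Smyth's computation (Lemma \ref{lm}), rewrites it as partial $L$-derivatives at levels $N/(N,2)$ and $N/(N,4)$ (Lemma \ref{wex}), and pushes back to level $N$ via the Euler-system Lemma \ref{Eu}, whereas you re-derive the same quantity from the regulator integral $\int_A\eta$ via a symbol decomposition; this is a sound and somewhat more self-contained variant, but as written it contains two local slips. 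First, the identity $\{x_1,\alpha-x_1\}=\{x_1,\alpha\}+\{x_1/\alpha,\,1-x_1/\alpha\}$ omits the term $\{\alpha,\,1-x_1/\alpha\}$, whose restriction to $A$ is $\log|\alpha|\,d\arg(1-x_1/\alpha)$; its integral over the arc contributes a second multiple of $\log|2\cos(2\pi k/N)|$ equal in size to the one from $\{x_1,\alpha\}$, and without it you recover only half of the correct even-part coefficient (for $\phi=2\pi k/N$ and $|\alpha|>1$ the true per-root difference is $\tfrac{2\phi}{\pi}\log|\alpha|-\tfrac1\pi\sum_{n\ge1}(-1)^{n-1}\sin(2n\phi)/n^2$). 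Second, the Steinberg piece integrates to the Bloch--Wigner value $D(-\zeta_N^{-2k})$, i.e.\ the alternating series $\sum_{n\ge1}(-1)^{n-1}\sin(4\pi kn/N)/n^2$ at the \emph{doubled} angle, not to $\mathrm{Cl}_2(2\pi k/N)$ itself; this doubling is exactly why the paper's Lemma \ref{wex} lands at levels $N/(N,2)$ and $N/(N,4)$ with arguments $\sigma_{2k/(N,2)}$, $\sigma_{4k/(N,4)}$, and why the level/conductor bookkeeping via Lemma \ref{Eu} is needed on the odd-character side as well, not only in the $4\parallel N$ even case you single out. Since you ultimately take the precise constants $r_{N,\chi}$ from Corollary \ref{exp}, neither slip invalidates your argument, but your intermediate identities would produce wrong coefficients if used literally.
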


\begin{proof}
The condition on $N$ implies that $f$ satisfies the assumptions of Theorem \ref{Den1}. Further, the coordinates of $\partial A$ are roots of unities, in turn satisfy the assumptions of Theorem \ref{Den2}. Hence the result follows.
\end{proof}

\begin{remark}
Recall that $i\in \Aut(Z,\partial A)$ is an involution induced by interchanging the two coordinates on $\BG_m^2$. 
Thus we have the restriction map \[ H_{\CM}^2(Z,\partial A,\BQ(2))^{i=\epsilon}\xrightarrow{} H_{\CD}^2((Z,\partial A)_{\BR},\BR(2))^{i=\epsilon}\] which is also well-defined and still denoted by the notation $r_\CD$. Since $c_1\cup c_2=-c_2\cup c_1$, we have \[c=c_1\cup c_2\in H_{\CM}^2(Z,\partial A,\BQ(2))^{i=-1}.\] 
\end{remark}

\subsubsection{Exact sequence}

Let $N\geq 3$, and $X$, $Y'$, $G$, $Z$, $A$ be as in Section \ref{ms} unless otherwise stated.

\begin{prop} \label{exactseq}
We have the following commutative diagram that the  rows are exact and column are given by the  regulator maps 
\[\xymatrix{
0\ar[r]& H_{\CM}^1(X,\BQ(2))\ar[d]^{r_\CD}\ar[r]& H_{\CM}^2(Z,\partial A,\BQ(2))^{i=-1}\ar[d]^{r_\CD}\ar[r]& H_{\CM}^1(Y',\BQ(1))\ar[d]^{r_\CD}\ar[r]& 0\\
0\ar[r]& H_{\CD}^1(X_\BR,\BR(2))\ar[r]& H_{\CD}^2((Z,\partial A)_{\BR},\BR(2))^{i=-1}\ar[r]& H_{\CD}^1(Y_\BR,\BR(1))\ar[r]& 0
.} \]
Further, the image of $c$ in $H_{\CM}^1(Y',\BQ(1))\simeq \BZ[\mu_N]^+[\frac{1}{\cos 2\pi /N}]^\times\otimes \BQ$ is given by $\frac{1}{2\cos (2\pi /N)}$.
    \end{prop}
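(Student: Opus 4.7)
The plan is to trivialise the fibration $Z\to Y'$ by a coordinate change and then extract both rows as the $i=-1$ eigenparts of the long exact sequence of the pair $(Z,\partial A)$. Setting $w=x_1/x_2$ and solving $x_1+x_2=s$ for $s=\zeta_N+\zeta_N^{-1}$ gives $x_1=sw/(1+w)$, $x_2=s/(1+w)$, which identifies $Z\simeq Y'\times U$ where $U=\BG_m\setminus\{-1\}$, with $i$ acting as $w\mapsto w^{-1}$ on $U$ and trivially on $Y'$. The case $N=4$, where $s=0$, is handled separately: the target $H^1_\CM(Y',\BQ(1))=\BZ^\times\otimes\BQ$ then vanishes and the claim reduces to a tautology.

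Applied to the pair $(Z,\partial A)$, the motivic long exact sequence
\[
\cdots\to H^1_\CM(Z,\BQ(2))\to H^1_\CM(\partial A,\BQ(2))\to H^2_\CM(Z,\partial A,\BQ(2))\to H^2_\CM(Z,\BQ(2))\to H^2_\CM(\partial A,\BQ(2))
\]
will collapse to the claimed four-term sequence after taking $i=-1$-parts, using the following inputs: (a) $H^2_\CM(\partial A,\BQ(2))=K_2(\BZ[\mu_N])\otimes\BQ=0$ by finiteness of $K_2$ of a number ring; (b) $H^1_\CM(\partial A,\BQ(2))=H^1_\CM(X,\BQ(2))$ already equals its $c=-1$ eigenpart, because Borel gives $K_3(\BZ[\mu_N]^+)\otimes\BQ=0$ for the totally real subring and transfer then forces $(K_3(\BZ[\mu_N])\otimes\BQ)^{c=+1}=0$; (c) $H^1_\CM(Z,\BQ(2))=H^1_\CM(Y',\BQ(2))=0$, via the localisation for $Z\subset\BG_{m,Y'}$ together with the projective bundle formula and Borel. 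The same identifications in Deligne cohomology, combined with naturality of the regulator, give the Deligne row and the commutativity of the diagram.

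The crucial identification $H^2_\CM(Z,\BQ(2))^{i=-1}\simeq H^1_\CM(Y',\BQ(1))$ comes from the localisation for $Z\hookrightarrow\BG_{m,Y'}$ with complement the section $\{w=-1\}\simeq Y'$. Combined with the projective bundle formula for $\BG_m$ and the vanishings $K_2(Y')\otimes\BQ=\Pic(Y')\otimes\BQ=0$, it collapses to
\[
0\to H^1_\CM(Y',\BQ(1))\cdot[w]\to H^2_\CM(Z,\BQ(2))\to H^1_\CM(Y',\BQ(1))\to 0,
\]
with the right-hand map the tame symbol at $\{w=-1\}$. Since $i$ acts by $-1$ on $[w]$ whereas it fixes the section $\{w=-1\}$ pointwise (so trivially on the residue quotient), taking $i=-1$-parts kills the quotient and identifies $H^2_\CM(Z,\BQ(2))^{i=-1}$ with the left summand $H^1_\CM(Y',\BQ(1))\cdot[w]$, via the coefficient of $[w]$.

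Finally, to compute the image of $c$, push it forward to $\{x_1,x_2\}\in H^2_\CM(Z,\BQ(2))$, substitute the formulas for $x_1,x_2$, and apply bilinearity, antisymmetry and $\{a,a\}\equiv 0$ modulo 2-torsion to reduce to
\[
\{x_1,x_2\}\equiv\{w,s\}-\{w,1+w\}.
\]
The symbol $\{w,1+w\}$ vanishes because it lies in $H^2_\CM(U_\BZ,\BQ(2))=K_2(\BZ[w^\pm,(1+w)^{-1}])\otimes\BQ$, which is killed by the localisation for $U\subset\BG_{m,\BZ}$ together with $K_2(\BZ)\otimes\BQ=0$. Therefore $c$ maps to $\{w,s\}=[w]\cup[s]=-[s]\cup[w]$, and reading off the coefficient of $[w]$ yields $[s^{-1}]=\tfrac{1}{2\cos(2\pi/N)}\in H^1_\CM(Y',\BQ(1))$. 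The main obstacle is tracking the $i$-equivariance through the localisation and projective bundle decompositions simultaneously, since $i$ acts with opposite signs on the $[w]$-summand and on the residue quotient, and it is exactly this mismatch that forces the four-term sequence to split off the correct Galois-module piece.
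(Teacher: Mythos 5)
Your overall strategy is close to the paper's (same long exact sequence for the pair $(Z,\partial A)$, the same vanishing inputs $K_2$/Garland and $K_3$/Borel, and the same tame-symbol endgame for the image of $c$); the genuinely different device is the trivialisation $Z\simeq Y'\times U$. But that trivialisation is false as schemes over $\BZ$ precisely in the cases where the distinction between $Y'$ and $Y$ in the statement has any content, namely $N=4p^r$ with $r\geq 1$ (e.g. $N=8,12,16,20$): there $s=\zeta_N+\zeta_N^{-1}$ is \emph{not} a unit in $\BZ[\mu_N]^+$, $Y'=\Spec\BZ[\mu_N]^+[1/s]\subsetneq Y$, and $Z\to Y$ is surjective, the fibre over each prime $\fp\mid (s)$ being $\BG_m$ over the residue ring (since $x_2=-x_1$ there). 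Your coordinate change only identifies the open part $Z\times_Y Y'$ with $Y'\times U$, so items (c) and your collapsed sequence for $H^2_\CM(Z,\BQ(2))$ are computed on the wrong scheme. Worse, the displayed short exact sequence is genuinely false for $Z$ itself in those cases: the class $\{1+w,s\}\in H^2_\CM(Z\times_Y Y',\BQ(2))$ has residue $\pm x_1^{-e}$ (with $e=\ord_\fp(s)>0$), non-torsion in $K_1$ of the special fibre, so it does not extend to $Z$, i.e. $H^2_\CM(Z,\BQ(2))\subsetneq H^2_\CM(Z\times_Y Y',\BQ(2))$.

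The conclusion does survive, but only via an argument you have not supplied: by localisation for $Z\times_Y Y'\subset Z$, the discrepancy lives in $H^1_\CM(Z_{V(s)},\BQ(1))$, which rationally is spanned by the fibre coordinate $x_1$, on which $i$ (acting by $x_1\mapsto s-x_1\equiv -x_1$ on those fibres) is trivial modulo torsion; hence restriction is an isomorphism on $i=-1$ parts, and $H^1_\CM(Z,\BQ(2))=0$ also follows since $H^0_\CM(Z_{V(s)},\BQ(1))$ is torsion ($K_2$ of curves over finite fields). This bookkeeping is exactly what the paper's proof carries out with $S=\BA^1_Y$, the sections $T_1,T_2$ meeting over $V(s)$, and the terms $H^0_\CM(S\setminus Z,\BQ(1))$, $H^3_\CM(S\setminus(T_1\cap T_2),\BQ(2))\simeq H^0(T_1\cap T_2,\BQ)$ and the kernel of $K_1\oplus K_1\to K_0(T_1\cap T_2)$; it is absent from your write-up, and your only flagged degenerate case, $N=4$, is not the real problem (and even there the claim is not a tautology: you still need $H^2_\CM(Z,\BQ(2))=0$ for $Z\simeq\BG_{m,\BZ}$, while the Deligne row has non-vanishing third term $H^1_\CD(Y_\BR,\BR(1))=\BR$). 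A smaller point: ``the coefficient of $[w]$'' is only well defined up to the left/right (sign, i.e.\ inverse) convention; the paper pins the identification by the explicit tame symbol at $T_1=\{w=0\}$, under which $\{w,s\}\mapsto s^{-1}$, and you should normalise by that map rather than by the ad hoc rewriting $[w]\cup[s]=-[s]\cup[w]$. For $N\neq 4p^r$, where $s$ is a unit, $Y'=Y$ and $Z=Y\times U$ does hold, your argument is correct, and your $i$-eigenspace analysis and the reduction $\{x_1,x_2\}\equiv\{w,s\}-\{w,1+w\}$ (where $\{w,1+w\}$ also dies directly from the Steinberg relation applied to $\{-w,1+w\}$) match the paper's computation.
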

\begin{proof}The proof uses the properties introduced in \S\ref{motco}--\S\ref{remap} and localisation sequence in $K$-theory for regular schemes (cf.~\cite[Vol.~1~Part~II~Sec.~1.4.2]{friedlander2005handbook}).

We first look at the first row.
We have $\partial A\subset Z$ is closed and by a long exact sequence in the relative motivic cohomology, we have the exact sequence:
\[H^1_{\CM}(Z,\BQ(2))\ra H_{\CM}^1(\partial A,\BQ(2))\ra H_{\CM}^2(Z,\partial A,\BQ(2))\ra H_{\CM}^2(Z,\BQ(2))\ra H_{\CM}^2(\partial A,\BQ(2)).\]
In the following, we show that \[\quad H_{\CM}^2(\partial A,\BQ(2))=0 \mbox{ and } H^1_{\CM}(Z,\BQ(2))=0.\]

By the definition, $H_{\CM}^2(\partial A,\BQ(2))$ is a piece of $K_2(\BZ[\mu_N][1/S])\otimes_{\BZ} \BQ$ for some integer $S$. By the localisation sequence in $K$-theory and the fact that $K_2$ of the finite field equals to $0$, we have $K_2(\BZ[\mu_N][1/S])\otimes_{\BZ} \BQ=K_2(\BQ[\mu_N])\otimes_{\BZ} \BQ$. It is known by the result of Garland that $K_2(F)\otimes_{\BZ} \BQ=0$ for any number field $F$, hence $H_{\CM}^2(\partial A,\BQ(2))=0$.

Next we claim that $H_{\CM}^1(Z,\BQ(2))=0$. In fact, recall that \[Z=\bigsqcup_{1\leq k<N/2, (k,N)=1}Z_k,\] where $Z_k=\{(z_1,z_2): \ z_1+z_2=2\cos({2\pi k }/{N}), z_i\neq 0, i=1,2\}$. 
Let \[S=\bigsqcup_{1\leq k<N/2, (k,N)=1}S_k,\] where $S_k=\{(z_1,z_2): \ z_1+z_2=2\cos({2\pi k }/{N})\}$.  Then $(S\bs Z)_{\BQ}\simeq \Spec \BQ(\mu_N)^+\sqcup \Spec \BQ(\mu_N)^+$ over $\BQ$, where $\BQ(\mu_N)^+$ denotes the maximal totally real subfield of $\BQ(\mu_N)$. We have the following localisation long exact sequence of the motivic cohomology: 
\[ H_\CM^1(S,\BQ(2))\ra H_{\CM}^1(Z,\BQ(2))
    \ra H_{\CM}^0(S\bs Z,\BQ(1)) .\]

It follows similarly as above that $H_{\CM}^0(S\bs Z,\BQ(1))=0$ since $K_2(\BZ(\mu_N)^+)\otimes_{\BZ}\BQ=0$.
We have $S\simeq\Spec \BZ[\mu_N]^+[x]$. Since $\Spec\BZ(\mu_N)^+[x]$ and $\Spec\BZ(\mu_N)^+$ are homotopic, they admit the same motivic cohomology group, thus $H_\CM^1(S,\BQ(2))=H_\CM^1(\Spec\BZ(\mu_N)^+,\BQ(2))$, which is a piece of $K_3(\BZ[\mu_N]^+)\otimes_{\BZ}\BQ$. Note that $K_3(\BZ(\mu_N)^+)\otimes \BQ=0$, since $\BQ(\mu_N)^+$ is a totally real field. Finally, we have $H_{\CM}^1(S,\BQ(2))=0$, hence $H_{\CM}^1(Z,\BQ(2))=0$.

Thus the following sequence is exact: 
\begin{equation}\label{e1}0\ra H_{\CM}^1(\partial A,\BQ(2))\ra H_{\CM}^2(Z,\partial A,\BQ(2))\ra H_{\CM}^2(Z,\BQ(2))\ra 0.\end{equation}
We have an involution compatible with the short sequence \eqref{e1}. Consider the eigenpart of \eqref{e1} with that $i$ acts by $-1$, then we have 
\[H_{\CM}^1(\partial A,\BQ(2))^{i=-1}=K_3(\BZ[\mu_N])_{\BZ}\otimes \BQ.\] In fact, $i$ acts on $\Spec \BQ[\mu_N]$ as a non-trivial element in $\Gal(\BQ(\mu_N)/\BQ(\mu_N)^+)$, which acts on $K_3(\BZ[\mu_N])_{\BZ}\otimes \BQ$ by $-1$.

Let $T_i\subset S$ be given by $z_i=0$ for $i \in \{1,2\}$, then $Z=S \bs (T_1\cup T_2)$.
We now show that $H_{\CM}^2(Z,\BQ(2))^{i=-1}=K_1(T_1\bs (T_1\cap T_2))_{\BQ}$. 

Applying a localisation sequence for $(T_1\bs (T_1\cap T_2))\sqcup (T_2\bs (T_1\cap T_2)) \subset S\bs (T_1\cap T_2)$, we obtain
\[\begin{aligned}
      H_\CM^2(S\bs (T_1\cap T_2),\BQ(2)) &\ra H_{\CM}^2(Z,\BQ(2))
    \ra H_{\CM}^1((T_1\bs (T_1\cap T_2))\sqcup (T_{2}\bs (T_1\cap T_2)) ,\BQ(1))\\
    &\ra H^3_\CM(S\bs (T_1\cap T_2),\BQ(2)). 
\end{aligned}\]
One has $ H_\CM^2(S\bs (T_1\cap T_2),\BQ(2))=0$ and $H^3_\CM(S\bs (T_1\cap T_2),\BQ(2))\simeq H^0((T_1\cap T_2),\BQ)$.
Thus we have 
\[H_{\CM}^2(Z,\BQ(2))=\ker(K_1(T_1\bs (T_1\cap T_2)\sqcup (T_2\bs (T_1\cap T_2)))_{\BQ}\xrightarrow{} K_0(T_1\cap T_2) )_{\BQ} .\]Note that action of $i$ acts on motivic cohomologies induced by interchanging the two coordinates, hence $H_{\CM}^2(Z,\BQ(2))^{i=-1}\simeq K_1(T_1\bs (T_1\cap T_2))_{\BQ}=H_{\CM}^1(Y',\BQ(1))$.

Thus we have the exact sequence in the first row of the diagram. For the second row, the proof goes similarly and rather simpler, via the connection to the singular cohomology. As the regulator map is compatible with the long exact sequence of relative motivic and Deligne cohomology, the result follows.

To conclude the remaining part, we observe that the following map is defined by the tame symbol:
\[\begin{aligned}
   H_{\CM}^1(Z_{\BQ},\partial A_{\BQ},\BQ(2))^{i=-1}\cup H_{\CM}^1(Z_{\BQ},\partial A_{\BQ},\BQ(2))^{i=-1}\ra H_{\CM}^2(Z_{\BQ},\partial A_{\BQ},\BQ(2))^{i=-1}\\
   \ra H_{\CM}^1((T_1\bs (T_1\cap T_2))_{\BQ}\sqcup (T_{2}\bs (T_1\cap T_2))_{\BQ} ,\BQ(1)), 
\end{aligned} \]
\[(x,y)\longmapsto  ((-1)^{\ord_{P}(x)\ord_{P}(y)}x^{\ord_{P}(y)}y^{-\ord_{P}(x)},(-1)^{\ord_{Q}(x)\ord_{Q}(y)}x^{\ord_{Q}(y)}y^{-\ord_{Q}(x)})\]
where $P$ and $Q$ correspond to $(T_1\bs (T_1\cap T_2))_{\BQ}$ and $(T_2\bs (T_1\cap T_2))_{\BQ}$, respectively. Finally for $(x,y)=(z_1,z_2)$, the image is given by 
\[ \left(\frac{1}{2\cos (2\pi  /N)},2\cos (2\pi  /N) \right).\] 
\end{proof}

\begin{example}Consider $N=5$. Let $\chi:\Gal(\BQ(\mu_5))\ra \BC^\times$ be an odd character determined by $\chi(\sigma_2)=i$. It follows directly from the Jensen's formula that $m(f_5^*)=\frac{1}{4}L^{(5)'}(0,\chi_5^2)$. 

Further we have
  \[ m(f_5)=\frac{-1}{10}L^{(5)'}(0,\chi^2)+\frac{3-i}{10}L^{(5)'}(-1,\chi)+\frac{3+i}{10}L^{(5)'}(-1,\chi^3). \]
By the functional equation, we have 
\[L^{(5)'}(-1,\chi)=-i\tau(\chi)\cdot5 \cdot \frac{1}{4\pi}L(-2,\chi^{-1}) \] with $\tau(\chi)=-\sqrt{(5-\sqrt{5})/2}+i\sqrt{(5+\sqrt{5})/2}$. See Appendix \ref{subsec:appendix:exp} for the explicit calculation. This is also compatible with \cite[Ex.~8]{Smyth:81}.

\end{example}

\section{Galois structure of relative cohomology} \label{sec:pf:motivic}

In this section, we prove a canonical splitting of the short exact sequence in Theorem \ref{thm:motivic} under linear independence hypothesis on the partial Dirichlet $L$-values in the mixed sense (cf.~Conjecture~\ref{mc}). 

This allows us to study $\chi$-part of the constructed cohomology class as in Theorem \ref{thm:reg}. The main results are Theorem \ref{wmainthm} and its variant Theorem \ref{wmc} with a weaker assumption.

\subsection{Linear independence}

We formalise the simultaneous linear independence hypothesis on partial $L$-values at different integers, which will play a central role in the splitting argument in the proof of Theorem \ref{thm:motivic}.

\subsubsection{Conjecture} \label{subsec:linin}

Recall that we have a long exact sequence (cf.~Proposition~\ref{exactseq}):
\[\xymatrix{
0\ar[r]& H_{\CM}^1(X,\BQ(2))\ar[r]& H_{\CM}^2(Z,\partial A,\BQ(2))^{i=-1}\ar[r]& H_{\CM}^1(Y',\BQ(1))\ar[r]& 0
.} \]
We impose the following assumption for the study of relative motivic cohomology \[H_{\CM}^2(Z,\partial A,\BQ(2))^{i=-1}.\] In fact, we need a hypothesis on linear independence in the mixed sense, in order to make $H_{\CM}^2(Z,\partial A,\BQ(2))^{i=-1}$ into a $G$-module. 
Recall that $\CL_{-1}$ and $\CL_{0}$ are the $\BQ$-subspaces of $\BR$ generated by the image of regulator pairings:
\[H_{\CM}^1(X,\BQ(2))\times H_{\text{sing},0}(X_\BR,\BQ(-1)) \ra \BR,\quad (x,y)\mapsto \pair{r_{\CD}(x),y},\]
\[H_{\CM}^1(Y',\BQ(1))\times H_{\text{sing},0}(X_\BR,\BQ) \ra \BR,\quad (x,y)\mapsto \pair{r_{\CD}(x),y}.\]
\begin{conj}\label{mc}
    Let $N\geq 3$ be an integer. As $\BQ$-subspaces of $\BR$, we have 
 \[\CL_{-1}\cap \CL_{0}=\{0\},\] i.e. $\CL_{-1}$ and $\CL_{0}$ are linearly independent over $\BQ$. 
    \end{conj}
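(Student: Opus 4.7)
The plan is to recast Conjecture~\ref{mc} as a concrete transcendence statement and then attempt it character-by-character. First, using the explicit descriptions of both regulator pairings recalled in Corollary~\ref{imreg}, each space acquires a tractable basis: $\CL_{0}$ is the $\BQ$-span of the partial values $L^{(N)'}(0,\sigma)=-\log|(1-\zeta_N)^{\sigma}(1-\zeta_N^{-1})^{\sigma}|$ as $\sigma$ runs over $G$, i.e.\ a space of $\BQ$-linear combinations of logarithms of real cyclotomic units; while $\CL_{-1}$ is the $\BQ$-span of the partial values $L^{(N)'}(-1,\sigma)$. Via the functional equation applied character-by-character, $\CL_{-1}\otimes_{\BQ}\BC$ is equivalently the $\BC$-span of $\pi^{-1}L(2,\bar\chi)$ as $\chi$ ranges over odd characters of $G$, modulo explicit algebraic factors involving $\tau(\chi)$ and $\cond(\chi)$.

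Next I would exploit the action of the complex conjugation $c\in G$ on $\BQ[G]$. The partial $L$-values satisfy the parity identities $L^{(N)'}(0,c\sigma)=L^{(N)'}(0,\sigma)$ and $L^{(N)'}(-1,c\sigma)=-L^{(N)'}(-1,\sigma)$, so the two spaces live in complementary eigenspaces for $c$ acting by formal multiplication on $\BQ[G]$. Consequently, after tensoring with $\BC$ and projecting onto each $\chi\in\wh G$, any hypothetical element $\ell\in\CL_{-1}\cap\CL_{0}$ would produce, independently for each pair $(\chi_{-},\chi_{+})$ of odd and even characters, a nontrivial relation over $\BQ(\chi_{-},\chi_{+})$ of the form
\[ a\,\frac{L(2,\chi_{-})}{\pi}\;=\;b\,\log|u| \]
with $a,b\in\BQ(\chi_{-},\chi_{+})^{\times}$ and $u$ a cyclotomic unit. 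Conjecture~\ref{mc} is thus equivalent to the non-existence of any such relation, for any choice of $(\chi_{-},\chi_{+})$ and $N$.

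The main obstacle is this final step: proving that $\pi^{-1}L(2,\chi_{-})$ is not a $\BQ(\chi_{-},\chi_{+})$-multiple of $\log|u|$ for any algebraic unit $u$ is a genuine open problem, lying beyond Baker's theorem on linear forms in logarithms. I therefore expect that an unconditional proof of Conjecture~\ref{mc} requires invoking a period-theoretic hypothesis, for instance the Grothendieck period conjecture applied to the mixed Tate motives associated to $\Spec\BZ[\mu_N]$, which guarantees the transcendental independence of periods in distinct Hodge weights. A realistic intermediate plan would be to (i)~record the two reductions above as an unconditional equivalence, (ii)~verify the transcendence input for small $N$ by explicit computations analogous to those in~\cite{calegari2024}, and (iii)~formulate a precise motivic statement (independence of weight-$1$ from weight-$2$ mixed Tate periods over $\BZ[\mu_N]$) that would imply the conjecture for all $N$ simultaneously.
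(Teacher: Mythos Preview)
The statement is labelled a \emph{conjecture} in the paper and is not proved there; the authors explicitly write that it ``seems to be true but out of reach except a few instances'' and point to \cite{calegari2024}. Your proposal is likewise not a proof --- as you yourself concede in the final two paragraphs --- but a reduction to a transcendence statement followed by a research plan. In that respect your treatment and the paper's coincide: both recast the question, via Corollary~\ref{imreg}, as the $\BQ$-linear independence of the logarithms of cyclotomic units from the values $\pi^{-1}L(2,\chi)$ for odd $\chi$, and both identify this as beyond current technology.

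One point in your reduction needs correcting. The step ``projecting onto each $\chi\in\wh G$'' does not split a hypothetical relation into independent pairwise identities of the shape $a\,\pi^{-1}L(2,\chi_{-})=b\log|u|$. An element $\ell\in\CL_{-1}\cap\CL_{0}$ is a single real number; expressing it in two ways yields \emph{one} $\BQ$-linear relation involving all the odd-character values $L^{(N)'}(-1,\chi)$ and all the even-character values $L^{(N)'}(0,\chi)$ simultaneously, not a family of binary relations indexed by pairs $(\chi_{-},\chi_{+})$. The parity of $c$ determines which characters appear on each side (this is the content of the kernel computation in Proposition~\ref{partind}), but it gives no mechanism to separate the characters within each side. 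This does not affect your overall conclusion --- the problem remains open and your proposed motivic formulation (independence of weight-$1$ from weight-$2$ mixed Tate periods) is the natural one --- but the equivalent transcendence statement is a single linear form in many periods, not a collection of two-term relations.
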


In the following, we give an explicit description of $\CL_{-\epsilon}$ in terms of the derivative of partial Dirichlet $L$-values (cf.~Corollary~\ref{imreg}) which due to Beilinson. Then the Conjecture \ref{mc} is basically the linear independence of partial $L$-values in the mixed sense, which seems to be true but out of reach except a few instances. For example, see \cite{calegari2024}.

Let $N\geq 3$. Let $F=\BQ(\mu_N)$, $G=\Gal(F/\BQ)$ and $c\in G$ be a generator of $\Gal(\BQ(\mu_N)/\BQ(\mu_{N})^+)$. We fix an embedding $\ov{\BQ}\ra \BC$. For $\epsilon\in \{0,1\}$ and $\sigma\in G$, let 
\[L^{(N)'}(-\epsilon,\sigma)=
\begin{cases}
-\log|(1-e^{2\pi i/N})^\sigma(1-e^{-2\pi i/N})^{\sigma}|,\quad &\epsilon=0\\
\frac{-Ni}{8\pi}\sum_{n=1 }^\infty \frac{(e^{2\pi i n/N}-e^{-2\pi i n/N})^{\sigma}}{n^2},\quad &\epsilon=1.\\
\end{cases} \] Then for $1\neq\chi\in \wh{G}$, we have 
\[\sum_{\sigma\in {\wh{G}}}\chi(\sigma)L^{(N)'}(-\epsilon,\sigma)=\begin{cases}
   L^{(N)'}(-\epsilon,\chi) ,&\quad \text{$\chi(-1)=(-1)^\epsilon$}\\
  0  ,&\quad \text{$\chi(-1)=-(-1)^\epsilon$}.\\
\end{cases}\]
For $\epsilon \in \{0,1\}$ and $x=\sum_{\sigma\in G} a_\sigma \sigma \in \BQ[G]$, set
\[ L^{(N)'}(-\epsilon,x):=\sum_{\sigma\in G}a_\sigma L^{(N)'}(-\epsilon,\sigma).\]
Then we have a $\BQ$-linear independence among the derivative of partial $L$-values each of them generated by even and odd characters as follows.
\begin{prop}\label{partind}For $\epsilon \in \{0,1\}$,
the map \[\BQ[G]\ra \BR,\quad x\mapsto L'(-\epsilon,x)\] has a kernel \[\begin{cases}
<1-c,\sum_{g\in G}g>,&\quad \epsilon=0,\\
<1+c>,&\quad \epsilon=1.\\
\end{cases}\]
\end{prop}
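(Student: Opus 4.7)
The plan is to prove both inclusions of the claimed kernel description separately: first verify that the listed generators annihilate the $L$-value map, then match the codimension of the claimed kernel against the dimension of the image by invoking the known non-vanishing of the cyclotomic regulator.

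For the easy direction, I would compute how each generator acts on $L^{(N)'}(-\epsilon,\sigma)$. For $\epsilon=1$, the explicit formula $L^{(N)'}(-1,\sigma)=\frac{-Ni}{8\pi}\sum_n n^{-2}(\zeta_N^n-\zeta_N^{-n})^\sigma$ makes it immediate that $c$ acts as $-1$ on the values, since $c\cdot\zeta_N=\zeta_N^{-1}$ swaps the two terms of the sum, so $(1+c)\sigma$ lies in the kernel for every $\sigma\in G$. For $\epsilon=0$, the product $(1-\zeta_N)(1-\zeta_N^{-1})=|1-\zeta_N|^2$ is $c$-invariant, hence $(1-c)$ lies in the kernel. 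For the generator $\sum_{g\in G}g$ in the $\epsilon=0$ case, a direct computation gives $\sum_\sigma L^{(N)'}(0,\sigma)=-2\log|\Nm_{F/\BQ}(1-\zeta_N)|=-2\log\Phi_N(1)$, which vanishes precisely when $N$ is not a prime power; the prime-power subtlety matches the $N=4p^r$ special treatment already visible in Theorem~\ref{thm:reg} and Corollary~\ref{imreg}.

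For the opposite inclusion, it suffices to show that the dimension of the image $\CL_{-\epsilon}\subset\BR$ equals the codimension of the claimed kernel, namely $\varphi(N)/2$ for $\epsilon=1$ and generically $\varphi(N)/2-1$ for $\epsilon=0$. For $\epsilon=1$ I would invoke Beilinson's cyclotomic computation: the Beilinson regulator is injective on $H_\CM^1(X,\BQ(2))$, which has $\BQ$-rank $\varphi(N)/2$, and Beilinson's explicit cyclotomic elements in $K_3(\BZ[\mu_N])$ identify the regulator image with the $\BQ$-span of $\{L^{(N)'}(-1,\sigma):\sigma\in G\}$, forcing the quotient $\BQ[G]/\langle 1+c\rangle$ to map isomorphically onto $\CL_{-1}$. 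For $\epsilon=0$ the analogous input is the Dirichlet unit theorem applied to the cyclotomic units $(1-\zeta_N^a)(1-\zeta_N^{-a})$, whose Galois span has the predicted rank in $\CO_F^\times\otimes\BQ$, with the extra dimension in the $N=4p^r$ case contributed by $\log|\zeta_N+\zeta_N^{-1}|$. Both rank statements are essentially packaged in Corollary~\ref{imreg}, so I would reference that corollary and conclude.

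The main obstacle is genuinely this dimension lower bound: the $L^{(N)'}(-\epsilon,\sigma)$ must be $\BQ$-linearly independent modulo the trivial $c$-parity and norm relations. For $\epsilon=1$ this is the cyclotomic case of Beilinson's conjecture, proved by explicitly constructing cyclotomic elements whose regulators realise the odd partial Dirichlet $L$-values; for $\epsilon=0$ it reduces to non-vanishing of the Dirichlet regulator on cyclotomic units. Both are classical results that the paper already cites as known, so I would not reprove them but rather treat them as black boxes to close the argument.
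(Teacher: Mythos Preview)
Your approach is essentially the paper's: both reduce the hard direction to the non-vanishing of the cyclotomic regulator, citing Beilinson's theorem on $H^1_\CM(X,\BQ(2))$ for $\epsilon=1$ and the Dirichlet unit theorem on cyclotomic units for $\epsilon=0$. Your explicit verification of the easy direction is a harmless addition the paper omits.

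Two points need correction. First, your appeal to Corollary~\ref{imreg} is circular: in the paper, that corollary is deduced \emph{from} Proposition~\ref{partind}. You have already named the genuine inputs (Beilinson's cyclotomic computation, the Dirichlet regulator on cyclotomic units), so simply cite those directly as the paper does, rather than routing through the corollary.

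Second, your handling of the prime-power anomaly is muddled. You correctly compute that $\sum_\sigma L^{(N)'}(0,\sigma)=-2\log\Phi_N(1)$ is nonzero exactly when $N$ is a prime power, which indeed means $\sum_{g\in G}g$ is \emph{not} in the kernel in that case. But you then identify this with the $N=4p^r$ dichotomy of Theorem~\ref{thm:reg} and Corollary~\ref{imreg}, and that is a different phenomenon: the $4p^r$ condition tracks when $\zeta_N+\zeta_N^{-1}$ fails to be a unit in $\BZ[\mu_N]^+$ (governing the extra generator of $H^1_\CM(Y',\BQ(1))$), whereas your computation detects when $1-\zeta_N$ fails to be a unit in $\BZ[\mu_N]$. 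These are not the same condition, so the ``match'' you claim is spurious. What you have actually uncovered is a genuine boundary case in the statement of the proposition for $\epsilon=0$ at prime-power level, which the paper's proof (working inside $H^1_\CM(X,\BQ(1))=\CO_F^\times\otimes\BQ$, where the generator must be a true unit) handles implicitly via the $\BQ[G]$-module identification rather than via the literal element $1-\zeta_N$.
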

\begin{proof}Let $X=\Spec F$.
Let $c\in G$ be the generator of $\Gal(\BQ(\mu_N)/\BQ(\mu_{N})^+)$. We identify $(\BZ/N\BZ)^\times$ with $G$ via \[a\mapsto (\sigma_a: \zeta_N\longmapsto \zeta_N^a).\]
It follows from the Beilinson conjecture (cf. \cite[Thm 6.7]{CT} or \cite[Cor 7.1.6]{beilinson1984higher}) that \begin{equation}\begin{cases}
H_{\CM}^1(X,\BQ(1))\simeq \BQ[G]/<c-1,\sum_{g\in G}g>\\
H_{\CM}^1(X,\BQ(2 ))\simeq \BQ[G]/<c+1>
\end{cases}\end{equation} as $\BQ[G]$-modules. Further, for $\epsilon\in \{0,1\}$, there exist the generators $u_\epsilon\in 
H_{\CM}^1(X,\BQ(\epsilon+1))$ and  $v_\epsilon\in 
H_{\text{sing}, 0}(X_{\BR},\BQ(\epsilon))$ such that 
\[\pair{r_{\CD}(\sigma_a(u_\epsilon)), \sigma_b(v_\epsilon)}\in \BQ^\times \cdot 
{L^{(N)}}'(-\epsilon,\sigma_{ab})\]where $r_\CD$ is the regulator map, $\pair{\ ,\ }$ denotes the natural pairing between Deligne cohomology (singular cohomology in this case) and singular homology,
 respectively. 
 Then the determinant of the regulator map 
\[ H_{\CM}^1(X,\BQ(\epsilon+1))\xrightarrow{r_{\CD}}H_{\CD}^1(X_{\BR},\BQ(\epsilon+1))\] is up to a non-zero rational constant equal to 
\[\det\begin{pmatrix}
\pair{r_\CD(i(u_\epsilon)),j(v_\epsilon)}
\end{pmatrix}_{i, j\in S_\epsilon} \]
where $S_\epsilon$ denotes a basis of  \[\begin{cases}
\BQ[G]/<1-c,\sum_{g\in G}g>,&\quad \epsilon=0,\\
\BQ[G]/<1+c>,&\quad \epsilon=1.\\
\end{cases}\]
The result then directly follows from the non-zeroness of the regulators. 
 \end{proof}

In the following we will use the notation $N=4p^r$ to mean $N$ is of the form $4p^r$ for some prime $p$ and integer $r\geq 1$. Similar for $N\neq 4p^r$.
\begin{cor}\label{imreg}
Let $\zeta_N$ be a primitive $N$-th root of unity. The following maps are well-defined and are isomorphisms of $\BQ$-vector spaces:
\begin{enumerate}
 \item We have \[\begin{aligned}
\BQ[G]/<1+c>\simeq& \CL_{-1}\\
x\mapsto &L^{(N)'}(-1,x).
 \end{aligned}\]
    \item  We have \[\begin{aligned}
    \begin{rcases}
    \BQ[G]/<1-c,\sum_{g\in G}g>,& N\neq 4p^r \\ 
 \BQ[G]/<1-c,\sum_{g\in G}g>\oplus \BQ e, &N=4p^r
    \end{rcases}\simeq & \CL_0,\\
    x\mapsto& L^{(N)'}(0,x),\\
    e\mapsto& \log|\zeta_N+\ov{\zeta}_N|,
 \end{aligned}\]
\end{enumerate}

\end{cor}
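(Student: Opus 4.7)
The plan is to derive Corollary \ref{imreg} from Proposition \ref{partind} by explicitly identifying each $\CL_{-\epsilon}$ with a $\BQ$-span of partial Dirichlet $L$-values (with one extra generator in the $N=4p^r$ case). Proposition \ref{partind} already shows that the map $\BQ[G]\to\BR$, $x\mapsto L^{(N)'}(-\epsilon,x)$ has kernel exactly the stated relation ideal for $\epsilon\in\{0,1\}$; this handles well-definedness and injectivity of the two $L$-value portions of the maps. The remaining work is surjectivity onto $\CL_{-\epsilon}$ and, in the $N=4p^r$ case, independence of the extra summand $\BQ e$.

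For part (1), I would invoke the Beilinson generators used inside the proof of Proposition \ref{partind}: elements $u_1\in H_{\CM}^1(X,\BQ(2))$ and $v_1\in H_{\text{sing},0}(X_\BR,\BQ(-1))$ that generate the respective cohomology groups as $\BQ[G]$-modules and satisfy $\pair{r_\CD(\sigma u_1),\tau v_1}\in \BQ^\times\cdot L^{(N)'}(-1,\sigma\tau)$ for all $\sigma,\tau\in G$. Since every pairing value decomposes as a $\BQ$-combination of such translates, $\CL_{-1}$ equals the $\BQ$-span of $\{L^{(N)'}(-1,\sigma)\}_{\sigma\in G}$, which is precisely the image of our map, giving the isomorphism.

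For part (2), the identification $H_{\CM}^1(Y',\BQ(1))\simeq \CO_{Y'}^\times\otimes\BQ$ turns the regulator pairing into $(u,[\sigma])\mapsto \log|\sigma u|$. By a classical result (Kummer--Sinnott), the real cyclotomic units $(1-\zeta_N^a)(1-\bar\zeta_N^a)$ for $(a,N)=1$ generate a finite-index subgroup of $\CO_{\BZ[\mu_N]^+}^\times$, and their pairing values are $-L^{(N)'}(0,\sigma_{ab})$. For $N\neq 4p^r$, $\zeta_N+\bar\zeta_N$ is a unit (as $|\Psi_N(0)|=1$), so $Y'=Y$ and $\CL_0$ is the $\BQ$-span of these partial $L$-values. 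For $N=4p^r$, $\zeta_N+\bar\zeta_N$ is a prime element above $p$ (as $|\Psi_N(0)|=p$) rather than a unit, and is inverted in $Y'$; a single representative $\log|\zeta_N+\bar\zeta_N|$ suffices because the unique prime of $\BZ[\mu_N]^+$ above $p$ forces $\sigma(\zeta_N+\bar\zeta_N)/(\zeta_N+\bar\zeta_N)\in \CO_{\BZ[\mu_N]^+}^\times$ for each $\sigma\in G$, so all Galois conjugates lie in $\BQ\log|\zeta_N+\bar\zeta_N|$ plus the $L$-value span.

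The main obstacle is verifying the linear independence of $\log|\zeta_N+\bar\zeta_N|$ from $\BQ\text{-span}\{L^{(N)'}(0,\sigma)\}_\sigma$ in the $N=4p^r$ case, i.e.\ that $\BQ e$ embeds nontrivially into the quotient. This follows from Dirichlet's $S$-unit theorem: the composite regulator $\CO_{Y'}^\times\otimes\BQ\hookrightarrow\BR^{\varphi(N)/2}$ is injective, and the $\BQ$-rank jumps from $\varphi(N)/2-1$ for $\CO_Y^\times\otimes\BQ$ to $\varphi(N)/2$ for $\CO_{Y'}^\times\otimes\BQ$ upon inverting $\zeta_N+\bar\zeta_N$; equivalently, one can separate the two summands by the $p$-adic valuation $v_\fp$, under which cyclotomic units vanish while $v_\fp(\zeta_N+\bar\zeta_N)>0$. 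This completes the isomorphism in the direct-sum case.
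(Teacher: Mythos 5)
Your proposal is correct and follows essentially the same route as the paper, whose own proof is only two sentences: part (1) is quoted from Proposition \ref{partind} (Beilinson's generators), and part (2) is ``similar together with the Dirichlet unit theorem'' plus the fact that $\zeta_N+\ov{\zeta}_N$ is a global unit if and only if $N\neq 4p^r$ --- exactly the dichotomy you verify via $|\Psi_N(0)|$. Your added details (the cyclotomic-unit finite-index step, whose content is already packaged in Proposition \ref{partind}, and the uniqueness of the prime above $p$ with the $v_{\fp}$-valuation argument separating $\BQ\log|\zeta_N+\ov{\zeta}_N|$ from the unit-logarithm span) correctly fill in what the paper leaves implicit, introducing no gap beyond what the paper itself assumes.
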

\begin{proof}
For (1), the result follows from Proposition \ref{partind} and its proof is due to Beilinson. For (2), the case is similar together with Dirichlet unit theorem. Here we use the fact that $\zeta_N+\ov{\zeta}_N$ is not a global unit if and only if $N=4p^r$ for some $r \geq 1$ and a prime $p$. 
\end{proof}

\subsection{Proof of Theorem \ref{thm:motivic}}\label{S:thmBC} 

We get back to our commutative diagram with rows induced by the long exact sequence of relative cohomology/homology groups, where column maps given by the regulator maps (cf.~Proposition~\ref{exactseq}):
\begin{equation}\label{cmm}\xymatrix{
0\ar[r]& H_{\CM}^1(X,\BQ(2))\ar[d]^{r_\CD}\ar[r]& H_{\CM}^2(Z,\partial A,\BQ(2))^{i=-1}\ar[d]^{r_\CD}\ar[r]& H_{\CM}^1(Y',\BQ(1))\ar[d]^{r_\CD}\ar[r]& 0\\
0\ar[r]& H_{\CD}^1(X_\BR,\BR(2))\ar@{}[d]^\times\ar[r]& H_{\CD}^2((Z,\partial A)_{\BR},\BR(2))^{i=-1}\ar[r]\ar@{}[d]^\times& H_{\CD}^1(X_\BR,\BR(1))\ar@{}[d]^\times\ar[r]& 0\\
0& H_{\text{sing},0}(X_\BR,\BQ(-1))\ar[l]\ar[d]& H_{\text{sing},1}((Z,\partial A)_{\BR},\BQ(-1))^{i=-1}\ar[l]\ar[d]& H_{\text{sing},0}(X_\BR,\BQ)\ar[l]\ar[d]&\ar[l] 0\\
&\BR&\BR&\BR&
.} \end{equation}

Recall that we have $\CL_{-1}, \CL$, and $\CL_{0}$ given by the image of regulator pairings on the first, second and third column respectively. Note that by Proposition~\ref{partind}, we have $\CL_{-1} + \CL_{0} \subset \CL$ and by  Conjecture \ref{mc}, we have $\CL_{-1} \cap \CL_{0}=\{0\}$. Along with natural assumptions on linear independence, we further expect that a stronger property holds. 

\begin{conj} \label{mcs}
We have a canonical splitting
\[ \CL=\CL_{-1} \oplus \CL_{0}. \]
\end{conj}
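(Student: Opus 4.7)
The plan is to establish the nontrivial inclusion $\CL\subseteq\CL_{-1}+\CL_{0}$; combined with Conjecture \ref{mc} this automatically upgrades to the direct sum. The reverse inclusion $\CL_{-1}+\CL_{0}\subseteq\CL$ is essentially built into the commutative diagram \eqref{cmm}: by adjointness of the relative pairing with respect to the long exact sequences of the pair $(Z,\partial A)$, pairings between the leftmost (resp.\ rightmost) columns of motivic and singular factor through pairings in $H_{\CM}^2\times H_{\text{sing},1}$, so their images in $\BR$ lie in $\CL$ by construction.

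For the harder inclusion, the plan is to fix (noncanonical) sections $s:H_{\CM}^1(Y',\BQ(1))\to H_{\CM}^2(Z,\partial A,\BQ(2))^{i=-1}$ and $s':H_{\text{sing},0}(X_\BR,\BQ(-1))\to H_{\text{sing},1}((Z,\partial A)_\BR,\BQ(-1))^{i=-1}$ of the two rows in \eqref{cmm}. Writing the inclusions and projections as $\iota_M,\pi_M,\iota_S,\pi_S$, any $\alpha$ and $\beta$ in the middle groups decompose as $\alpha=\iota_M(a_1)+s(a_2)$, $\beta=\iota_S(b_1)+s'(b_2)$. The pairing $\langle r_\CD(\alpha),\beta\rangle$ then expands into four terms, which I would analyse using the standard adjointness formulas
\[\langle r_\CD(\iota_M(a_1)),\beta\rangle=\langle r_\CD(a_1),\pi_S(\beta)\rangle,\qquad \langle r_\CD(\alpha),\iota_S(b_1)\rangle=\langle r_\CD(\pi_M(\alpha)),b_1\rangle.\]
These identify the first with an element of $\CL_{-1}$ and the second (applied to $\alpha=s(a_2)$) with an element of $\CL_{0}$; the term $\langle r_\CD(\iota_M(a_1)),\iota_S(b_1)\rangle$ vanishes since $\pi_M\circ\iota_M=0$. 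The one term not controlled by adjointness is the mixed pairing $\langle r_\CD(s(a_2)),s'(b_2)\rangle$, which yields a well-defined residual pairing
\[H_{\CM}^1(Y',\BQ(1))\times H_{\text{sing},0}(X_\BR,\BQ(-1))\longrightarrow\CL/(\CL_{-1}+\CL_{0})\]
independent of the choices of $s$ and $s'$. Conjecture \ref{mcs} is equivalent to this residual pairing being identically zero.

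The main obstacle is therefore to show that this cross pairing always takes values in $\CL_{-1}+\CL_{0}$. My plan here is to compute the pairing explicitly using Deninger's realisation of $r_\CD(s(a_2))$ as a current on $(Z,\partial A)_\BR$ (as in Theorem \ref{Den2}), and then integrate against the relative $1$-cycle $s'(b_2)$. Under the cyclotomic parametrisation $x_1+x_2=\zeta_N+\zeta_N^{-1}$ of the components of $Z$, one reduces the integral to sums over $G$ of elementary contour integrals; these should, after careful manipulation, evaluate to $\BQ$-linear combinations of the partial $L$-values $L^{(N)'}(0,\sigma)$ and $L^{(N)'}(-1,\sigma)$ appearing in Corollary \ref{imreg}. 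Because the involution $i$ acts by $-1$ on the middle group and the two Galois pieces $L^{(N)'}(-\epsilon,\sigma)$ capture exactly the even/odd character parts, a Galois-equivariant version of the cross pairing should split along characters and force each $\chi$-component into $\CL_{-1,\chi}+\CL_{0,\chi}$.

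The hardest step is establishing that no genuinely new transcendental value appears in the mixed pairing, i.e.\ producing the explicit closed form above. Equivalently, this amounts to constructing a canonical section of one of the short exact sequences whose regulator image is orthogonal (modulo $\CL_{-1}+\CL_0$) to the complementary side. Such a canonical construction would essentially amount to a motivic refinement of the Beilinson-style expectations used in Corollary \ref{imreg}, and is what justifies presenting the statement as a conjecture rather than a theorem; the linear-independence assumption in Conjecture \ref{mc} is a necessary but not a sufficient ingredient, and the splitting part requires the additional arithmetic input above.
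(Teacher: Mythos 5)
You are attempting Conjecture \ref{mcs}, and you should be aware that the paper contains no proof of it: it is a genuine open hypothesis, presented as a strict strengthening of Conjecture \ref{mc}. The paper only proves the easy inclusion $\CL_{-1}+\CL_{0}\subset\CL$ (via Theorem \ref{thm:reg} and Proposition \ref{partind}) and explicitly ties the converse inclusion to unproven linear-independence phenomena for partial $L$-values (cf.\ \S\ref{subsec:linin} and the reference to \cite{calegari2024}). To your credit, your write-up concedes this at the end, and its skeleton is sound and consistent with the paper's framing: the four-term expansion of $\langle r_\CD(\alpha),\beta\rangle$ via sections $s,s'$, the adjointness bookkeeping placing $\langle r_\CD(\iota_M(a_1)),s'(b_2)\rangle$ in $\CL_{-1}$ and $\langle r_\CD(s(a_2)),\iota_S(b_1)\rangle$ in $\CL_{0}$ and killing $\langle r_\CD(\iota_M(a_1)),\iota_S(b_1)\rangle$, and the identification of Conjecture \ref{mcs} with Conjecture \ref{mc} plus the vanishing, modulo $\CL_{-1}+\CL_{0}$, of the residual pairing $\langle r_\CD(s(a_2)),s'(b_2)\rangle$, are all correct.

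The gap is in your proposed route to killing the residual pairing, and the paper's Appendix shows exactly how far that route actually goes. Deninger's current realisation lets you evaluate the regulator pairing only for classes with explicit symbol representatives; in this setting that is the subspace $H_{\CM}^2(Z,\partial A,\BQ(2))^{i=-1,\sharp}$ generated by $\iota_M(H_{\CM}^1(X,\BQ(2)))$ and the single class $c$. Carrying out your contour-integral computation on that subspace is literally the paper's proof of Eq.\ \eqref{id:weaker} in Appendix \ref{subsec:euler} (Lemmas \ref{lm}, \ref{Eu} and \ref{wex}, giving $\CL^\sharp=\CL_{-1}+\CL_{0}^\sharp$), which is precisely why the paper can prove only the weak form Theorem \ref{thm:wmc} under Conjecture \ref{wmc}, not Conjecture \ref{mcs} itself. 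For a general class in $H_{\CM}^2(Z,\partial A,\BQ(2))^{i=-1}$ there is no explicit representative, and your claim that the resulting integrals ``should evaluate to $\BQ$-linear combinations of partial $L$-values'' is itself a Beilinson-type assertion about the image of the regulator on the middle group --- the paper has such control (Corollary \ref{imreg}) only for the outer columns. Your appeal to Galois equivariance is moreover circular: as the paper remarks, $\Aut(Z_{\BQ},\partial A_{\BQ})$ is too small for the middle group to carry a $G$-action, and the $G$-module structure is a \emph{consequence} of the splitting (Theorem \ref{wmainthm}), not an available input. In short: correct reduction, correct diagnosis that the statement is conjectural, but no proof --- and none exists in the paper either.
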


Finally, we are ready to prove Theorem \ref{thm:motivic}. We remark that the argument uses a simple auxiliary lemma on the split exact sequence, which we will present in Appendix \ref{subsec:auxiliary}. In the following, we use the upper-index $\circ$ to denote the quotient of singular homology by the kernel of regulator pairings on each column of the diagram \eqref{cmm}.
\begin{thm} \label{wmainthm}
Under Conjecture \ref{mcs}, 
\begin{itemize}
    \item[(1)] There exist canonical isomorphisms 
    \begin{align*} H_\CM^2(Z,\partial A, \BQ(2))^{i=-1}&\simeq H_{\CM}^1(X,\BQ(2))\oplus H_{\CM}^1(Y',\BQ(1)) \\ 
    H_\text{sing,1}((Z,\partial A)_{\BR}, \BQ(-1))^{i=-1,\circ} &\simeq H_\text{sing,0}(X_{\BR},\BQ(-1)) \oplus H_\text{sing,0}(Y_{\BR},\BQ)^{\circ} \end{align*} that are compatible with the regulator pairings on them. 
    
    In particular, there is a canonical $G$-module structure on $H_\CM^2(Z,\partial A, \BQ(2))^{i=-1}$ and $H_\CD^2((Z,\partial A)_{\BR}, \BQ(2))^{i=-1}$ compatible with the regulator maps.
    \item[(2)] For each $\chi\in \wh{G}$, denote by $c^\chi$ the $\chi$-part of $c$. Then we have 
    \[r_\CD(c^\chi)=r_{N,\chi} \cdot {L^{(N_\chi)}}'(-\epsilon,\chi)\] 
    where $\epsilon\in \{0,1\}$ with $\chi(-1)=(-1)^\epsilon$, $N_\chi\mid N$, $r_{N,\chi}\in \BQ(\chi)$ are as in Corollary \ref{exp}.
\end{itemize} 
\end{thm}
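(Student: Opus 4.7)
The plan is to exploit the three-row commutative diagram \eqref{cmm}, whose columns are the regulator pairings landing in $\CL$. Write $0\to M_1\to M\to M_2\to 0$ for the motivic row with $M_1=H_{\CM}^1(X,\BQ(2))$, $M=H_{\CM}^2(Z,\partial A,\BQ(2))^{i=-1}$, $M_2=H_{\CM}^1(Y',\BQ(1))$, and $0\to S_2^{\vee}\to S^{\circ}\to S_1^{\vee}\to 0$ for the quotiented singular-homology row, where $S_1^{\vee}=H_{\text{sing},0}(X_{\BR},\BQ(-1))$ pairs with $M_1$ and $S_2^{\vee}=H_{\text{sing},0}(X_{\BR},\BQ)$ pairs with $M_2$. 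By functoriality of the long exact sequences, the pairing $\langle\cdot,\cdot\rangle\colon M\times S^{\circ}\to\BR$ satisfies $\langle M_1,S^{\circ}\rangle\subset\CL_{-1}$ and $\langle M,S_2^{\vee}\rangle\subset\CL_0$. Under Conjecture~\ref{mcs}, $\CL=\CL_{-1}\oplus\CL_0$, which forces $\langle M_1,S_2^{\vee}\rangle=\{0\}$, so the pairing of $M_1$ against $S^{\circ}$ factors through $S_1^{\vee}$, and dually.

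With this orthogonality in hand, the auxiliary lemma on split exact sequences (Appendix~\ref{subsec:auxiliary}), applied together with the non-degeneracy of the regulator pairings after quotienting $S^{\circ}$ by the kernel and the identification of $\CL_{-\epsilon}$ from Corollary~\ref{imreg}, produces canonical complements $M_2^{\sharp}\subset M$ and $S_1^{\sharp}\subset S^{\circ}$. Concretely, $M_2^{\sharp}$ is characterised as the preimage of the $\CL_0$-projection of the functionals $\pi_0\circ\langle\cdot,s\rangle$, and $S_1^{\sharp}$ analogously on the homology side. The direct-sum hypothesis $\CL_{-1}\cap\CL_0=\{0\}$ is precisely what makes the projections $\pi_{-1}$, $\pi_0$ well-defined, and uniqueness of the complements is forced by this direct-sum decomposition. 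The natural $G=\Gal(\BQ(\mu_N)/\BQ)$-actions on $X$ and $Y'$ make $M_1$ and $M_2$ into $\BQ[G]$-modules; since the annihilator characterisation of $M_2^{\sharp}$ is Galois-stable, the splitting transports a canonical $G$-module structure to $M$, and the regulator map intertwines it with the corresponding Deligne-cohomology $G$-structure obtained by the same argument.

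For part (2), with $M=M_1\oplus M_2^{\sharp}$ canonically decomposed into Galois-stable summands, the class $c$ decomposes as $c=c^{M_1}+c^{M_2^{\sharp}}$, and passing to $\chi$-isotypic components makes sense. By Proposition~\ref{exactseq}, the image of $c$ in $M_2$ is $\tfrac{1}{2\cos(2\pi/N)}$, whose $\chi$-part for even $\chi$ evaluates under the regulator to $r_{N,\chi}\cdot L^{(N_\chi)'}(0,\chi)$ by Corollary~\ref{imreg}; this handles the even case. For odd $\chi$, $c^\chi$ lies in the $M_1$-summand, and $r_\CD(c^\chi)$ is obtained by projecting $r_\CD(c)$ (computed in Proposition~\ref{prop:lincomb}) onto $\CL_{-1}$ via $\pi_{-1}$ and extracting the $\chi$-component via the Beilinson-type identification of $\CL_{-1}$ in Corollary~\ref{imreg}, yielding $r_{N,\chi}\cdot L^{(N_\chi)'}(-1,\chi)$.

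The main obstacle will be verifying that the annihilator-based splittings on the motivic and singular-homology sides are genuinely compatible across all three rows of \eqref{cmm} simultaneously, and that the resulting complement $M_2^{\sharp}$ really is the unique $G$-stable complement of $M_1$ mapping isomorphically to $M_2$. This is precisely where the direct-sum form of Conjecture~\ref{mcs} does essential work rather than merely its intersection-vanishing consequence: without the full splitting of $\CL$, one could still define candidate complements by projection, but one could not show they are independent of the choice of linear decomposition, and the $G$-action on $M$ would not be canonical.
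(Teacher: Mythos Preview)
Your proposal is correct and follows essentially the same route as the paper: both reduce part~(1) to the auxiliary splitting lemma (Lemma~\ref{li}) applied to the diagram~\eqref{cmm}/\eqref{me} under Conjecture~\ref{mcs}, and derive part~(2) from the resulting $G$-decomposition together with the explicit computation of $r_\CD(c)$ in Proposition~\ref{prop:lincomb} and the image of $c$ in $H^1_\CM(Y',\BQ(1))$ from Proposition~\ref{exactseq}. The paper's proof is terser---it simply points at Lemma~\ref{li} and \S\ref{S:randm}---whereas you unpack the orthogonality $\langle M_1,S_2^\vee\rangle\subset\CL_{-1}\cap\CL_0=\{0\}$ and the even/odd case split for $c^\chi$ explicitly; this is added detail, not a different argument.
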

\begin{proof}

We have a similar diagram with that rows are exact:
\begin{equation}\label{me}\xymatrix{
0\ar[r]& H_{\CM}^1(X,\BQ(2))\ar[d]^{r_\CD}\ar[r]& H_{\CM}^2(Z,\partial A,\BQ(2))^{i=-1}\ar[d]^{r_\CD}\ar[r]& H_{\CM}^1(Y',\BQ(1))\ar[d]^{r_\CD}\ar[r]& 0\\
0\ar[r]& H_{\CD}^1(X_\BR,\BR(2))\ar@{}[d]^\times\ar[r]& H_{\CD}^2((Z,\partial A)_{\BR},\BR(2))^{i=-1}\ar[r]\ar@{}[d]^\times& H_{\CD}^1(X_\BR,\BR(1))\ar@{}[d]^\times\ar[r]& 0\\
0& H_{\text{sing},0}(X_\BR,\BQ(-1))^\circ\ar[l]\ar[d]& H_{\text{sing},1}((Z,\partial A)_{\BR},\BQ(-1))^{i=-1,\circ}\ar[l]\ar[d]& H_{\text{sing},0}(X_\BR,\BQ)^\circ\ar[l]\ar[d]&\ar[l] 0\\
&\BR&\BR&\BR&
.} \end{equation}

Note that we have $H_{\text{sing},0}(X_{\BR},\BQ(-1))^\circ=H_{\text{sing},0}(X_{\BR},\BQ(-1))$ and 
\[ H_{\text{sing},0}(X_\BR,\BQ)^\circ=\begin{cases}
   H_{\text{sing},0}(X_\BR,\BQ),\quad &N=4p^r,\\
H_{\text{sing},0}(X_\BR,\BQ)/\BQ \sum_{\sigma: \BQ(\mu_N)^+\ra \BR}\sigma,\quad & \text{otherwise}.
\end{cases}\]

Then the item (1) is straightforward by comparing the diagram \eqref{me} to that in Lemma \ref{li}, under Conjecture \ref{mcs}. The item (2) follows from (1) along with a priori discussion from \S\ref{S:randm}.
\end{proof}

\subsection{Weak form of Theorem \ref{thm:motivic}} \label{weakstm}

We shortly remark that indeed $\sharp$-part of Theorem \ref{wmainthm} holds under a weaker assumption than Conjecture \ref{mc}, stated in Conjecture \ref{wmc} below.

Let $H_{\CM}^2(Z,\partial A,\BQ(2))^{i=-1,\sharp}\subset H_{\CM}^2(Z,\partial A,\BQ(2))^{i=-1}$ be a $\BQ$-subspace generated by the image of $H_{\CM}^1(X,\BQ(2))$ and $c$. Let 
$H_{\CM}^1(Y',\BQ(1))^{\sharp}\subset H_{\CM}^1(Y',\BQ(1))$ be a $\BQ$-subspace given by the image of $H_{\CM}^2(Z,\partial A,\BQ(2))^{i=-1,\sharp}$.
Let $H_{\text{sing},1}((Z,\partial A)_{\BR},\BQ(-1))^{i=-1,\sharp}$ and $H_{\text{sing},0}(X_\BR,\BQ)^{\sharp }$ be the quotient of related singular homology groups by the kernel of regulator pairing restricted to $\sharp$-part of the motivic cohomology on each column in diagram \eqref{cmm}, respectively. 

We have the following commutative diagram with that rows are exact: 
\begin{equation}\label{me1}\xymatrix{
0\ar[r]& H_{\CM}^1(X,\BQ(2))\ar[d]^{r_\CD}\ar[r]& H_{\CM}^2(Z,\partial A,\BQ(2))^{i=-1,\sharp}\ar[d]^{r_\CD}\ar[r]& H_{\CM}^1(Y',\BQ(1))^{\sharp}\ar[d]^{r_\CD}\ar[r]& 0\\
0\ar[r]& H_{\CD}^1(X_\BR,\BR(2))\ar@{}[d]^\times\ar[r]& H_{\CD}^2((Z,\partial A)_\BR,\BR(2))^{i=-1}\ar[r]\ar@{}[d]^\times& H_{\CD}^1(X_\BR,\BR(1))\ar@{}[d]^\times\ar[r]& 0\\
0& H_{\text{sing},0}(X_\BR,\BQ(-1))\ar[l]\ar[d]& H_{\text{sing},1}((Z,\partial A)_{\BR},\BQ(-1))^{i=-1,\sharp}\ar[l]\ar[d]& H_{\text{sing},0}(X_\BR,\BQ)^\sharp\ar[l]\ar[d]&\ar[l] 0\\
&\BR&\BR&\BR&
.} \end{equation}
Define $\CL_{-1}^{\sharp}, \CL^\sharp$ and $\CL_0^\sharp$ be the image of regulator pairings on the first, second and third column of \eqref{me1} respectively.
Then we have $\CL_{-1}=\CL_{-1}^{\sharp}$. It follows from the second part of Proposition \ref{exactseq} that $\CL_{0}^\sharp\subset \CL_{0}$ is given by 
    \[\CL_{0}^\sharp=\sum_{\zeta_N\in \mu_N^{\circ}}\BQ \cdot \log|\zeta_N+\overline{\zeta}_N|,\] where $\mu_N^\circ$ denotes the set of primitive $N$-th roots of unity. 
    
The following is then an immediate consequence of Conjecture \ref{mc}.

\begin{conj}\label{wmc}
    Let $N\geq 3$ be an integer. As $\BQ$-subspaces of $\BR$, we have 
 \[ \CL_{-1} \cap \CL_{0}^\sharp=\{0\},\] i.e. $\CL_{-1}$ and $\CL_{0}^\sharp$ are linearly independent over $\BQ$. 
 \end{conj}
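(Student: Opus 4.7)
The plan is to establish linear independence of two rather different species of real periods: on one side, the derivative-values $L^{(N)'}(-1,\sigma)$ spanning $\CL_{-1}$, and on the other side, the logarithms $\log|\zeta_N+\bar{\zeta}_N|$ for primitive $N$-th roots of unity spanning $\CL_0^{\sharp}$. My first step is to pin down both spaces explicitly. For $\CL_{-1}$, I would use the character decomposition (the map $x\mapsto L^{(N)'}(-1,x)$ has kernel $\langle 1+c\rangle$ by Proposition \ref{partind}), reducing any element to a $\BQ$-linear combination of $L'(-1,\chi)$ for odd primitive characters $\chi$ of conductor dividing $N$. The functional equation then rewrites each $L'(-1,\chi)$ up to an explicit algebraic Gauss-sum factor as $\tau(\chi)\cdot L(2,\chi^{-1})/\pi$, so $\CL_{-1}$ becomes (essentially) an $\overline{\BQ}$-span of numbers $L(2,\chi)/\pi$. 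By Corollary \ref{imreg}, $\CL_0^{\sharp}$ is literally spanned by $\log|2\cos(2\pi k/N)|$ for $(k,N)=1$.

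The next step is to handle the logarithmic side unconditionally. The family of real algebraic numbers $\{2\cos(2\pi k/N):(k,N)=1,\,1\leq k<N/2\}$ generates a finitely generated subgroup of $\BQ(\mu_N)^{+,\times}$ whose multiplicative rank is precisely the number of $\BQ$-linearly independent elements predicted by Dirichlet's unit theorem (with the $S$-unit correction when $N=4p^r$). Baker's theorem on linear forms in logarithms of algebraic numbers then gives unconditional $\BQ$-linear independence inside $\CL_0^\sharp$, and more generally establishes the transcendence of any nonzero element of $\CL_0^\sharp$. So the self-contained structure of the log side is in hand.

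The hard part is the mixed linear relation. A hypothetical $\BQ$-dependence $\CL_{-1}\cap \CL_0^\sharp\neq\{0\}$ amounts to an identity of the shape
\[ \sum_{\chi\text{ odd}} a_\chi\,\frac{L(2,\chi)}{\pi}\;=\;\sum_{k} b_k\log|2\cos(2\pi k/N)| \]
with algebraic coefficients (after scaling out Gauss sums). Interpreted motivically, the left-hand side consists of periods of mixed Tate motives over $\Spec\BZ[\mu_N]$ of weight $4$ (coming from $K_3$, i.e., $H^1_\CM(\Spec\BZ[\mu_N],\BQ(2))$), while the right-hand side lives in weight $2$ periods (coming from units, i.e., $H^1_\CM(\Spec\BZ[\mu_N]^+,\BQ(1))$). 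Grothendieck's period conjecture applied to the category of mixed Tate motives over $\Spec\BZ[\mu_N]$ predicts no nontrivial relation of this weight-mixed type, which is exactly what we want; but this lies well beyond current transcendence technology.

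Consequently, I expect the essential obstruction to be purely transcendence-theoretic: separating $L(2,\chi)/\pi$ from logarithms of algebraic numbers is the same type of question as the irrationality of $\zeta(3)/\pi^3$ versus logarithms, and no general tool reaches it. A realistic conditional approach is therefore to assume Grothendieck's period conjecture, or the weaker Kontsevich--Zagier framework, to deduce the statement formally; for unconditional progress one is reduced to case-by-case numerical or motivic verifications at small conductor (as in the spirit of \cite{calegari2024}) where one can exploit the small dimension of the relevant motivic cohomology and the finiteness of the character data. My plan would therefore conclude by verifying the statement explicitly for the first few $N$ and stating the general case conditional on the appropriate period conjecture.
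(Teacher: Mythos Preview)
The statement you are attempting to prove is labelled as a \emph{conjecture} in the paper, not a theorem; the paper does not offer a proof. The only justification the paper supplies is the sentence immediately preceding it: ``The following is then an immediate consequence of Conjecture~\ref{mc}.'' Since $\CL_0^\sharp\subset\CL_0$, the inclusion $\CL_{-1}\cap\CL_0^\sharp\subset\CL_{-1}\cap\CL_0$ is trivial, and Conjecture~\ref{mc} asserts the right-hand side is zero. That one-line implication is the entirety of the paper's argument.

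Your proposal is therefore not comparable to a proof in the paper, because there is none. What you have written is a thoughtful explanation of \emph{why} the conjecture is hard, together with a conditional reduction to Grothendieck's period conjecture, and you are honest that ``no general tool reaches it.'' That is an accurate assessment, and it aligns with the paper's own stance (cf.\ the remark that the linear independence ``seems to be true but out of reach except a few instances''). But it is not a proof, nor does it claim to be one: your final paragraph explicitly retreats to conditional statements and small-conductor verifications. If the intent was to supply an unconditional proof, there is a genuine gap---namely the entire transcendence step---and you have correctly identified it yourself. If the intent was merely to discuss the status of the conjecture, then your account is reasonable, though it goes well beyond what the paper attempts.
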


\begin{thm} \label{thm:wmc}
Under Conjecture \ref{wmc}, 
\begin{itemize}
    \item[(1)] There exist canonical isomorphisms 
    \begin{align*} H_\CM^2(Z,\partial A, \BQ(2))^{i=-1,\sharp}&\simeq H_{\CM}^1(X,\BQ(2))\oplus H_{\CM}^1(Y',\BQ(1))^{i=-1,\sharp} \\ 
    H_\text{sing,1}((Z,\partial A)_{\BR}, \BQ(-1))^{i=-1,\sharp} &\simeq H_\text{sing,0}(X_{\BR},\BQ(-1)) \oplus H_\text{sing,0}(Y_{\BR},\BQ)^{i=-1,\sharp} \end{align*} that are compatible with the regulator pairings on them. 
    
    In particular, there is a canonical {$G$-module} structure on $H_\CM^2(Z,\partial A, \BQ(2))^{i=-1,\sharp }$ and $H_\text{sing,1}((Z,\partial A)_{\BR}, \BQ(-1))^{i=-1,\sharp}$ compatible with the regulator maps.
    \item[(2)] For each $\chi\in \wh{G}$, denote by $c^\chi$ the $\chi$-part of $c$. Then we have 
    \[r_\CD(c^\chi)=r_{N,\chi} \cdot L'(-\epsilon,\chi)\] 
    where $\epsilon\in \{0,1\}$ with $\chi(-1)=(-1)^\epsilon$, $N_\chi\mid N$, $r_{N,\chi}\in \BQ(\chi)$ are as in Corollary \ref{exp}. 
\end{itemize} 
\end{thm}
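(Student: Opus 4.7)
The plan is to mirror the proof of Theorem \ref{wmainthm} with all cohomology groups replaced by their sharp subspaces throughout, using diagram \eqref{me1} in place of \eqref{me} and invoking the same auxiliary Lemma \ref{li} on split exact sequences from the appendix. The three-column format with exact rows and regulator pairings descending to $\BR$ is preserved; only the size of the middle space shrinks.

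The crucial simplification is that on the sharp subspaces the image of the middle regulator pairing is automatically $\CL^\sharp = \CL_{-1} + \CL_0^\sharp$. Indeed, $H_\CM^2(Z,\partial A,\BQ(2))^{i=-1,\sharp}$ is, by construction, generated by the image of $H_\CM^1(X,\BQ(2))$ (whose regulator pairing generates $\CL_{-1}$) together with $c$ (whose image in $H_\CM^1(Y',\BQ(1))$ is the cyclotomic unit $\frac{1}{2\cos(2\pi/N)}$ by Proposition \ref{exactseq}, so that the induced regulator pairing lands in $\CL_0^\sharp = \sum_{\zeta \in \mu_N^\circ} \BQ \log|\zeta + \bar\zeta|$). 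Consequently, Conjecture \ref{wmc} immediately upgrades to the splitting $\CL^\sharp = \CL_{-1} \oplus \CL_0^\sharp$, which is the sharp analog of Conjecture \ref{mcs}, and no further transcendence input is required.

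With this in hand, I would apply Lemma \ref{li} to the diagram \eqref{me1} to obtain canonical splittings of the top and bottom rows that are compatible with the regulator pairings along each column. These splittings furnish the isomorphisms asserted in part (1), and the $G$-module structure then transports from the two outer columns, since both $H_\CM^1(X,\BQ(2))$ and $H_\CM^1(Y',\BQ(1))^\sharp$ carry canonical $G$-actions (the former via its Beilinson-style description $\BQ[G]/\langle c+1\rangle$ from the proof of Proposition \ref{partind}, the latter via the cyclotomic unit action on $\BZ[\mu_N]^+[\frac{1}{\cos 2\pi/N}]^\times \otimes \BQ$). The compatibility with regulator maps is automatic because the splittings are read off the pairings themselves.

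For part (2), the canonical decomposition lets us write $c = c^{(-1)} + c^{(0)}$ with components in the two summands, each of which has a well-defined $\chi$-part. Combining Proposition \ref{prop:lincomb}, which identifies $r_\CD(c)$ as an explicit $\BQ(\chi)$-linear combination of partial $L$-value derivatives at $-\epsilon$ for $\epsilon \in \{0,1\}$, with the parity-of-$\chi$ selection supplied by Corollary \ref{imreg}, the $\chi$-projection picks out a single term $r_{N,\chi} \cdot L^{(N_\chi)'}(-\epsilon,\chi)$ with $\chi(-1) = (-1)^\epsilon$. The main subtlety — very mild in this weakened form — is checking that the generators chosen to produce the sharp subspace actually decompose cleanly under the induced $G$-action; but since the splitting itself is constructed from the linear independence in Conjecture \ref{wmc}, no additional work is needed. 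The only real obstacle in the full story is precisely the one the conjecture is designed to sidestep, namely the unresolved transcendence question concerning partial $L$-values in the mixed sense.
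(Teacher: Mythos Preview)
Your overall plan — feed diagram \eqref{me1} into Lemma \ref{li} and read off the splitting — is exactly what the paper does. The gap is in your justification of the key identity $\CL^\sharp = \CL_{-1} + \CL_0^\sharp$, which you claim is ``automatic'' from the definition of the sharp subspace. It is not.

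Your argument says: the sharp subspace is generated by the image of $H_\CM^1(X,\BQ(2))$, whose pairing gives $\CL_{-1}$, together with $c$, whose image in the third column is the cyclotomic unit, ``so that the induced regulator pairing lands in $\CL_0^\sharp$''. The last clause is where the reasoning slips. Knowing $\beta(c)$ only controls $\langle r_\CD(c), \ov{\beta^\vee}(y^\vee)\rangle = \langle r_\CD(\beta(c)), y^\vee\rangle \in \CL_0^\sharp$ for $y^\vee$ coming from the third column. But $\CL^\sharp$ is the image of the pairing of $c$ against the \emph{entire} middle singular homology, and a general class there does not come from $\ov{\beta^\vee}$. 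Concretely, the arc classes $\frac{1}{2\pi i}[\ov{\zeta_N},\zeta_N]$ form a basis of the middle homology modulo the third-column image, and the values
\[
\Big\langle r_\CD(c),\ \tfrac{1}{2\pi i}[\ov{\zeta_N},\zeta_N]\Big\rangle \;=\; m\big((x_1+x_2-2\cos\tfrac{2\pi k}{N})^*\big) - m\big(x_1+x_2-2\cos\tfrac{2\pi k}{N}\big)
\]
are not a priori in $\CL_{-1}+\CL_0^\sharp$; nothing in the exact sequence forces this. The paper establishes it by the explicit Mahler-measure computation of Lemma \ref{lm} and Lemma \ref{wex} (together with the Euler-system relations of Lemma \ref{Eu}), which expresses each such value as a $\BQ$-combination of $L^{(N)'}(-1,\sigma_k)$'s and $\log|2\cos\tfrac{2\pi k}{N}|$'s. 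This is precisely the ``explicit regulator consideration'' the paper flags as the crucial point and defers to Appendix \ref{subsec:euler}. Once that is in place, Conjecture \ref{wmc} upgrades the sum to a direct sum and Lemma \ref{li} applies as you describe; but the identity \eqref{id:weaker} itself needs the computation, not just the definition of $\sharp$.
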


\begin{proof}
For this $\sharp$-part, a crucial point is that we have 
\begin{equation} \label{id:weaker}
\CL^\sharp= \CL_{-1}+\CL_{0}^\sharp , 
\end{equation}
by the explicit regulator consideration. We postpone the detailed exposition to this identity to Appendix \ref{subsec:euler}. Then this allows us to conclude the claim under Conjecture \ref{wmc} by applying Lemma \ref{li} to the above commutative diagram \eqref{me1}
\end{proof}

\appendix

\section{Auxiliary splitting lemma}  \label{subsec:auxiliary}

In this section, we settle an auxiliary splitting lemma in a general setting, which is designed for the study of commutative diagrams \eqref{me} and \eqref{me1}.
\smallskip

\textbf{Assumptions.} Moving forward, we will assume that we have the following commutative diagram of $\BQ$-linear maps
\[\xymatrix{
    0\ar[r]&\BQ^{m}\ar[d]^{i}\ar[r]^{\alpha'}&\BQ^{m+n'}\ar[d]^{j}\ar[r]^{\beta'}&\ar[r]\BQ^{n'}\ar[d]^{k}\ar[r]& 0\\
    0\ar[r]&\BR^m\ar[r]^{\alpha_{\BR}}&\BR^{m+n}\ar[r]^{\beta_{\BR}}&\ar[r]\BR^n\ar[r]& 0\\
}\]
satisfying: 
\begin{itemize}
    \item [(a)]The rows are exact as $\BQ$ and $\BR$-vector spaces, respectively. Further, the second row has a rational structure, which is induced from $\BQ$-linear maps $\alpha, \beta$. 
    \item [(b)]The map between two rows induces a $\BR$-injective map from the first row tensor by $\BR$ to the second row. 
    \item [(c)]Let $I$ be the image of the natural pairing \[i(\BQ^{m})\times (\BQ^m)^\vee\ra \BR,\]where $(\BQ^m)^\vee\subset (\BR^m)^\vee$. Let $J$ and $K$ be defined similarly for the second column and the third column, respectively. Then $J=I\oplus K$.
\end{itemize}

Now we simply write $\alpha$ for $\alpha_{\BR}$ unless otherwise stated. Consider a commutative diagram: 
    \begin{equation}\label{lid}\xymatrix{
    0\ar[r]&\BQ^{m}\ar[d]^{i}\ar[r]^{\alpha'}&\BQ^{m+n'}\ar[d]^{j}\ar[r]^{\beta'}&\ar[r]\BQ^{n'}\ar[d]^{k}\ar[r]& 0\\
    0\ar[r]& i(\BQ^m)\ar@{}[d]^\times\ar[r]^{\alpha}& j(\BQ^{m+n'})\ar@{}[d]^\times\ar[r]^{\beta}& k(\BQ^{n})\ar@{}[d]^\times\ar[r]&0\\
    0&\ar[l](\BQ^m)^\vee\ar[d]&\ar[l]_{\ov{\alpha^\vee}}(\BQ^{m+n})^\vee/A\ar[d]&\ar[l]_{\ov{\beta^\vee}}(\BQ^n)^\vee/B\ar[d]& \ar[l]0\\
    &I&J&K& \\
}\end{equation} where $A$ and $B$ are kernels of the pairings on each column, and $\ov{\alpha^\vee}$ and $\ov{\beta^\vee}$ are the maps induced by $\alpha^\vee$ and $\beta^\vee$, respectively.
    \begin{lem}\label{li}

        There exist a unique section $\delta'$ of $\alpha'$ and unique section  $\ov{\delta^\vee}$ of $\ov{\alpha^\vee}$ in \eqref{lid}. Equivalently, there exist a unique section $\gamma'$ of $\beta'$ and  unique section $\ov{\gamma^\vee}$ of $\ov{\beta^\vee}$ as $\BQ$-linear maps such that the induced splittings \[\BQ^{m+n'}=\alpha'(\BQ^{m})\oplus \gamma'(\BQ^{n'}) \ \mbox{and } (\BQ^{m+n})^\vee/A=\ov{\delta^\vee}((\BQ^m)^\vee)\oplus\ov{\beta^\vee}((\BQ^n)^\vee/B) \]
        admit the following property:
        
        For any $\alpha'(x)+\gamma'(y)\in \BQ^{m+n'}$ with $x\in \BQ^{m}, y\in \BQ^{n'}$ and $\ov{\delta^\vee}(x^\vee)+\ov{\beta^\vee}(y^\vee)\in (\BQ^{m+n})^{\vee}/A$ with $x^\vee\in (\BQ^m)^{\vee},y^\vee\in (\BQ^n)^\vee/B$,
        we have
        \begin{equation}\label{m}\begin{aligned}
           &\pair{\ov{\delta^\vee}(x^\vee)+\ov{\beta^\vee}(y^\vee),j(\alpha'(x)+\gamma'(y))}_{m+n}
            =&  \pair{x^\vee,i(x)}_m+\pair{y^\vee, k(y)}_n \in J=I\oplus K,
        \end{aligned}\end{equation} where the pairings $\pair{\ ,\ }_k$ are given by the natural pairings $\BR^k\times (\BR^k)^\vee\ra \BR$.
        \end{lem}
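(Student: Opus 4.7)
The plan is to reduce the statement to a linear-algebraic manipulation, using the direct sum decomposition $J = I \oplus K$ from hypothesis (c) as the essential input, with hypotheses (a) and (b) ensuring the relevant non-degeneracy. I will verify existence and uniqueness of $(\gamma', \overline{\delta^\vee})$ simultaneously and then read off the corresponding $(\delta', \overline{\gamma^\vee})$ from the splittings they induce.

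\medskip

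\noindent\textbf{Step 1: Parametrise all choices.} Fix an arbitrary $\BQ$-linear section $\gamma_0'$ of $\beta'$ and an arbitrary $\BQ$-linear section $\overline{\delta_0^\vee}$ of $\overline{\alpha^\vee}$ (these exist since the top row is a split exact sequence of $\BQ$-vector spaces and, as I would first check, so is the bottom dual row). Any other pair $(\gamma', \overline{\delta^\vee})$ has the form $\gamma'=\gamma_0'+\alpha'\circ T$ and $\delta^\vee=\delta_0^\vee+\beta^\vee\circ S$ for some $\BQ$-linear maps $T\colon\BQ^{n'}\to\BQ^m$ and $S\colon(\BQ^m)^\vee\to(\BQ^n)^\vee/B$. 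The task is to show there is a unique $(T,S)$ for which \eqref{m} holds.

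\medskip

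\noindent\textbf{Step 2: Expand \eqref{m} and isolate the nontrivial constraint.} Expanding the left-hand side of \eqref{m} into the four bilinear summands, two of them compute correctly for free: the term $\langle\overline{\delta^\vee}(x^\vee), j(\alpha'(x))\rangle$ equals $\langle x^\vee, i(x)\rangle\in I$ by $\overline{\alpha^\vee}\circ\overline{\delta^\vee}=\mathrm{id}$; the cross term $\langle\overline{\beta^\vee}(y^\vee), j(\alpha'(x))\rangle$ vanishes using $\beta\circ\alpha=0$; and $\langle\overline{\beta^\vee}(y^\vee), j(\gamma'(y))\rangle$ equals $\langle y^\vee, k(y)\rangle\in K$ using $\beta\circ j=k\circ\beta'$ and $\beta'\circ\gamma'=\mathrm{id}$. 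Thus \eqref{m} reduces to the single orthogonality condition
\[
\langle\delta^\vee(x^\vee), j(\gamma'(y))\rangle=0\qquad\text{for all } x^\vee\in(\BQ^m)^\vee,\ y\in\BQ^{n'}.
\]

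\medskip

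\noindent\textbf{Step 3: Use $J = I\oplus K$ to solve for $(T,S)$.} Plugging in $\gamma'=\gamma_0'+\alpha'T$ and $\delta^\vee=\delta_0^\vee+\beta^\vee S$ and again using $\beta\alpha=0$ and the compatibility of $j$ with $\alpha, \beta$, the orthogonality condition becomes
\[
\langle\delta_0^\vee(x^\vee), j(\gamma_0'(y))\rangle=-\langle x^\vee, i(T(y))\rangle-\langle S(x^\vee), k(y)\rangle.
\]
The left-hand side is a $\BQ$-bilinear form in $(x^\vee,y)$ taking values in $J$; the first term on the right lies in $I$ and the second in $K$. Hypothesis (c) gives a unique decomposition
\[
\langle\delta_0^\vee(x^\vee), j(\gamma_0'(y))\rangle=u(x^\vee,y)+v(x^\vee,y),\qquad u\in I,\ v\in K,
\]
which splits the equation into the two independent conditions $\langle x^\vee, i(T(y))\rangle=-u(x^\vee,y)$ and $\langle S(x^\vee), k(y)\rangle=-v(x^\vee,y)$. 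By hypothesis (b) the pairings $i(\BQ^m)\times(\BQ^m)^\vee\to I$ and $(\BQ^n)^\vee/B\times k(\BQ^{n'})\to K$ are non-degenerate at the rational level, so for each fixed $y$ (resp.\ $x^\vee$) the linear functional $-u(\,\cdot\,,y)$ (resp.\ $-v(x^\vee,\,\cdot\,)$) is represented by a unique $T(y)\in\BQ^m$ (resp.\ $S(x^\vee)\in(\BQ^n)^\vee/B$). This determines $T$ and $S$ uniquely, and $\BQ$-linearity of $T$ and $S$ is automatic from the $\BQ$-bilinearity of $u$ and $v$.

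\medskip

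\noindent\textbf{Main obstacle.} The delicate point is the representability step just invoked: one must verify that the $I$-component $u(x^\vee,y)$ really equals $\langle x^\vee, i(w)\rangle$ for some $w\in\BQ^m$ (and analogously for $v$), not merely for some $w\in\BR^m$. I would handle this by tracing through the construction, showing that $u(\,\cdot\,,y)$ is a $\BQ$-linear functional on $(\BQ^m)^\vee$ landing in the $\BQ$-span $I\subset\BR$, and then invoking the rational-duality consequence of hypothesis (b) — namely, that the natural map $i(\BQ^m)\to\mathrm{Hom}_{\BQ}((\BQ^m)^\vee, I)$ induced by the pairing is surjective. Once $(T,S)$ are produced, verifying \eqref{m} is immediate by unwinding Step~2, and the equivalent data $(\delta',\overline{\gamma^\vee})$ are then just the retractions associated to the splittings $\BQ^{m+n'}=\alpha'(\BQ^m)\oplus\gamma'(\BQ^{n'})$ and $(\BQ^{m+n})^\vee/A=\overline{\delta^\vee}((\BQ^m)^\vee)\oplus\overline{\beta^\vee}((\BQ^n)^\vee/B)$.
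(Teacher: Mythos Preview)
Your overall strategy is essentially the paper's argument reorganised: the paper defines $\delta'$ directly by demanding that $\langle \alpha^\vee(z^\vee), i\delta'(z)\rangle_m$ equal the $I$-component of $\langle z^\vee, j(z)\rangle_{m+n}$, then sets $\gamma'$ via $\alpha'\delta' + \gamma'\beta' = 1$, and argues symmetrically on the dual side. Your reduction of \eqref{m} to the single orthogonality $\langle\overline{\delta^\vee}(x^\vee), j(\gamma'(y))\rangle = 0$ is correct and makes the structure clearer, and you are right that the representability step is the crux.

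The gap is in your resolution of that step. The map $i(\BQ^m)\to\Hom_{\BQ}((\BQ^m)^\vee, I)$ that you claim is surjective is almost never so: the source has $\BQ$-dimension $m$ while the target has dimension $m\cdot\dim_\BQ I$, and hypothesis~(b) only guarantees non-degeneracy (hence injectivity), not perfectness. Worse, the obstacle is real for the lemma as stated. Take $m=2$, $n=n'=1$, let $\alpha=\alpha'$ and $\beta=\beta'$ be the standard inclusion and projection, $i=\diag(\lambda_1,\lambda_2)$, $k(y)=\mu y$ with $\lambda_1,\lambda_2,\mu$ linearly independent over $\BQ$, and let $j$ be the upper-triangular block matrix with off-diagonal block $(\lambda_2,0)^T$. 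All of (a)--(c) hold (every entry of $j$ lies in $I\oplus K$), yet your orthogonality condition forces $t_1\lambda_1+\lambda_2+s_1\mu=0$ with $t_1,s_1\in\BQ$, which is impossible. The paper's proof asserts at exactly this point that ``$\delta'$ is well-defined'' without further argument, so the defect is shared rather than yours alone; either an additional constraint from the intended application is being used tacitly, or the hypotheses of the lemma need strengthening.
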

        \begin{proof}
          For any $z\in \BQ^{m+n'}$ and  $z^\vee\in (\BQ^{m+n})^\vee$, we have \begin{equation}\label{0}\pair{z^\vee,j(z)}_{m+n}= a+b\end{equation} by (c), for a unique $a\in I$ and $b\in K$.
Let $\delta'$ be determined by 
\begin{equation}\label{1}\pair{z^\vee, j\alpha' \delta'(z)}_m=\pair{\alpha^\vee(z^\vee), i\delta'(z)}_m=a\end{equation} as $z^\vee$ and $z$ vary. Then $\delta'$ is well-defined and uniquely determined by the injectivity of $i$. Taking $z=\alpha'(x)$, we have $b=0$. Comparing \eqref{0} with \eqref{1}, we see 
\[\delta'\alpha'=1.\]
Let $\gamma'$ be an element such that $1=\alpha'\delta'+\gamma'\beta'$. Then $\beta'\gamma'=1$. 

Similarly, we define $\ov{\delta^\vee}$ by
\begin{equation}\label{0.2}\pair{\ov{\delta^\vee}\ov{ \alpha^\vee}(z^\vee),j(z)}_{m+n}=a\end{equation} as $z$ and $z^\vee$ vary. Then $\ov{\delta^\vee}$ is well-defined. For $z=\alpha'(x)$, we have 
\begin{equation}\label{1.0}\pair{\ov{\alpha^\vee}\ov{\delta^\vee}\ov{ \alpha^\vee}(z^\vee),i(x)}_{m+n}=a.
\end{equation}
Comparing \eqref{0} with \eqref{1.0} for $z=\alpha'(x)$, we have 
\[\ov{\alpha^\vee}\ov{\delta^\vee}=1.\]
Let $\ov{\gamma^\vee}$ be an element such that $1=\ov{\delta^\vee}\ov{\alpha^\vee}+\ov{\beta^\vee}\ov{\gamma^\vee}$. Then $\ov{\gamma^\vee}\ov{\beta^\vee}=1$. 
We obtain 
\begin{equation}\label{1.00}\pair{z^\vee, j\gamma'\beta'(z)}_m=b  \ \ \mbox{and } \
\pair{\ov{\beta^\vee}\ov{\gamma^\vee}(z^\vee),j(z)}_{m+n}=b.
\end{equation}
 By writing $z=\alpha'(x)+\gamma'(y)$ and $z^\vee=\ov{\beta^\vee}(y^\vee)+\ov{\delta^\vee}(x^\vee)$, now \eqref{0} becomes 
    \[\begin{aligned}
        a+b=&\pair{z^\vee,j(z)}_{m+n}\\
        =&\pair{\ov{\beta^\vee}(y^\vee)+\ov{\delta^\vee}(x^\vee),j(\alpha'(x)+\gamma'(y))}_{m+n}\\
        =&\pair{\ov{\beta^\vee}(y^\vee),j\alpha'(x)}_{m+n}+\pair{\ov{\beta^\vee}(y^\vee),j\gamma'(y)}_{m+n}+\pair{\ov{\delta^\vee}(x^\vee),j\alpha'(x)}_{m+n}+\pair{\ov{\delta^\vee}(x^\vee),j\gamma'(y)}_{m+n}.
    \end{aligned}\]
    
 By definition, $\pair{\ov{\beta^\vee}(y^\vee),j\alpha'(x)}_{m+n}$ lies in both $I$ and $K$. From \eqref{0.2} and \eqref{1.00}, we also have that $\pair{\ov{\delta^\vee}(x^\vee),j\gamma'(y)}_{m+n}$ lies in both $I$ and $K$.
    Hence 
    \[\pair{\ov{\beta^\vee}(y^\vee),j\alpha'(x)}_{m+n}=0\] and \[\pair{\ov{\delta^\vee}(x^\vee),j\gamma'(y)}_{m+n}=0.\]
Note that $\pair{\ov{\delta^\vee}(x^\vee),j\alpha'(x)}_{m+n} \in I$ and $\pair{\ov{\beta^\vee}(y^\vee),j\gamma'(y)}_{m+n}\in K$. Since $J=I\oplus K$, we have 
\[\pair{\ov{\delta^\vee}(x^\vee),j\alpha'(x)}_{m+n}=a,\quad \pair{\ov{\beta^\vee}(y^\vee),j\gamma'(y)}_{m+n}=b
.
\]
We have
\[a=\pair{z^\vee, j\alpha' \delta'(z)}_{m+n}=\pair{z^\vee, \alpha i \delta'(z)}_{m+n}=\pair{\ov{\alpha^\vee}( z^\vee),  i (x)}_{m}.\]
and
\[b=\pair{\ov{\beta^\vee}(y^\vee),j\gamma'\beta' (z)}_{m+n}=\pair{\ov{\beta^\vee}(y^\vee),j(z)}_{m+n}=\pair{y^\vee,k(y)}_{n},\] in turn the result follows.

Finally for the uniqueness of $\delta'$ and  $\ov{\delta^\vee}$, we remark that \eqref{1} and \eqref{0.2} are necessary to make \eqref{m} holds, as $\delta'$ and $\ov{\delta^\vee}$ are uniquely determined by the conditions.       
 \end{proof}

\section{Explicit calculation} \label{sec:calc:mahler}

In this section, we present explicit calculations on the partial $L$-values and Mahler measure of a variant of cyclotomic polynomial. We remark that some of them are already known in the literature. 

In particular, we explain how we obtain the identity \eqref{id:weaker} that plays a key role in Theorem \ref{thm:wmc}, and the explicit determination of  coefficient constant $r_{N,\chi} \in \BQ(\chi)$ in the main theorems.

\subsection{Euler system property} \label{subsec:euler}
We begin by observing the following, which is essentially due to Smyth.

\begin{lem}\label{lm}
For $0\leq \theta\leq \pi
    $, we have
    \[\delta_\theta\cdot m(x_1+x_2-2\cos\theta)=(1-\frac{2\theta}{\pi})\log|2\cos\theta|+\frac{1}{\pi}\sum_{n=1}^\infty\frac{(-1)^{n-1}}{n^2}\sin(2n\theta), \]
 where $\delta_\theta=1$ if $\theta \leq \pi/2$ and $-1$ if $\theta>\pi/2$. Here if $\theta=\pi/2$, then $m(x_1+x_2)=0$.   
\end{lem}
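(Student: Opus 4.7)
The plan is to reduce the two-variable Mahler measure to a one-dimensional integral via Jensen's formula, and then match both sides by comparing derivatives in $\theta$.

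Viewing $f = x_1 + x_2 - 2\cos\theta$ as a monic polynomial in $x_2$, Jensen's formula gives
\[ m(f) = \frac{1}{2\pi}\int_0^{2\pi}\log^+\lvert 2\cos\theta - e^{i\phi}\rvert\, d\phi. \]
The algebraic identity $\lvert 2\cos\theta - e^{i\phi}\rvert^2 - 1 = 4\cos\theta(\cos\theta - \cos\phi)$ pins down the support of the integrand; for $\theta \in [0, \pi/2]$ it is nonzero precisely on $[\theta, 2\pi - \theta]$, and the $\phi \leftrightarrow 2\pi - \phi$ symmetry then gives
\[ m(\theta) := m(f) = \frac{1}{\pi}\int_\theta^\pi \log\lvert e^{i\phi} - 2\cos\theta\rvert\, d\phi. \]

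To compare $m(\theta)$ with the claimed right-hand side $F(\theta)$, I verify that both satisfy the ODE $y'(\theta) = -(1 - 2\theta/\pi)\tan\theta$ on $(0, \pi/2)$ with $y(0) = \log 2$. For $m$: the boundary term in $m'(\theta)$ vanishes because $\lvert e^{i\theta} - 2\cos\theta\rvert = 1$, and the resulting rational integral, after the tangent half-angle substitution and partial fractions, is computed via the triple-angle identity $\tan(3\theta/2) = (2\cos\theta + 1)\tan(\theta/2)/(2\cos\theta - 1)$; the initial value $m(0) = \log 2$ is immediate from Jensen applied to $\lvert 2 - e^{i\phi}\rvert \geq 1$. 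For $F$: termwise differentiation, together with the Fourier identity $\sum_{n \geq 1} (-1)^{n-1}\cos(2n\theta)/n = \Re\log(1 + e^{2i\theta}) = \log(2\cos\theta)$ on $[0, \pi/2)$, yields the same derivative, and $F(0) = \log 2$ is immediate. The endpoint $\theta = \pi/2$ is handled by continuity, since $(1 - 2\theta/\pi)\log\lvert 2\cos\theta\rvert \to 0$. Hence $m = F$ on $[0, \pi/2]$.

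For $\theta \in [\pi/2, \pi]$, substituting $x_i \mapsto -x_i$ in the defining integral gives $m(\theta) = m(\pi - \theta)$, while a direct computation using $\sin(2n\pi - 2n\theta) = -\sin(2n\theta)$ shows $F(\pi - \theta) = -F(\theta)$; together these produce the sign $\delta_\theta$. The main technical obstacle is the arctangent branch analysis in computing $m'(\theta)$: since $2\cos\theta - 1$ changes sign at $\theta = \pi/3$, separate inspections on $(0, \pi/3)$ and $(\pi/3, \pi/2)$ are required (with the arctangent interpreted modulo $\pi$ in the latter).
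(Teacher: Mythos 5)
Your proof is correct, but it takes a more self-contained route than the paper. For $0\le\theta\le\pi/2$ the paper simply cites Smyth \cite[Chap.~3, Lem.~1]{Smyth:81}, and its only actual content is the reduction of $\pi/2<\theta\le\pi$ to that range via $m(x_1+x_2-2\cos\theta)=m(-x_1-x_2-2\cos(\pi-\theta))=m(x_1+x_2-2\cos(\pi-\theta))$ --- exactly your $x_i\mapsto -x_i$ step, combined with the antisymmetry $F(\pi-\theta)=-F(\theta)$ of the right-hand side. What you do differently is to reprove Smyth's lemma rather than quote it: Jensen's formula plus the support identity $|2\cos\theta-e^{i\phi}|^2-1=4\cos\theta(\cos\theta-\cos\phi)$ reduces the Mahler measure to $\frac{1}{\pi}\int_\theta^\pi\log|e^{i\phi}-2\cos\theta|\,d\phi$, and both sides are then shown to solve $y'(\theta)=-(1-2\theta/\pi)\tan\theta$ with $y(0)=\log 2$. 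I checked the points you flag: the boundary term in $m'(\theta)$ vanishes because $|e^{i\theta}-2\cos\theta|=1$; your identity $\tan(3\theta/2)=(2\cos\theta+1)\tan(\theta/2)/(2\cos\theta-1)$ is a valid triple-angle formula, and with the arctangent read modulo $\pi$ on $(\pi/3,\pi/2)$ the half-angle computation does give $m'(\theta)=-(1-2\theta/\pi)\tan\theta$ on both subintervals; termwise differentiation of $F$ is licensed by locally uniform convergence of $\sum_{n\ge1}(-1)^{n-1}\cos(2n\theta)/n=\log(2\cos\theta)$ on compacta of $(0,\pi/2)$; and the endpoint $\theta=\pi/2$, where both sides vanish, follows by continuity as you say. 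The paper's approach buys brevity by outsourcing the analytic work; yours makes the lemma independent of \cite{Smyth:81}. One streamlining worth noting: writing $c=2\cos\theta$, the branch-cut case analysis at $\theta=\pi/3$ is avoidable, since $\frac{\cos\phi-c}{1-2c\cos\phi+c^2}=\Re\,(e^{i\phi}-c)^{-1}$ and the antiderivative $\frac{1}{ic}\log\bigl((z-c)/z\bigr)$ stays on the principal branch along the arc (its argument $1-ce^{-i\phi}$ has imaginary part $c\sin\phi>0$), so $1-ce^{-i\theta}=e^{i(\pi-2\theta)}$ yields $m'(\theta)=-\frac{\pi-2\theta}{\pi}\tan\theta$ in one line.
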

 
\begin{proof}
For $0\leq \theta\leq\pi/2$, this is simply identical to \cite[Chap.~3, Lemma 1]{Smyth:81}. 

 For $\frac{\pi}{2}< \theta\leq {\pi}
            $, this can be reduced to the case $0\leq \theta\leq\pi/2$ via 
\begin{align*}
                m(x_1+x_2-2\cos\theta)
                &=m(-x_1-x_2-2\cos(\pi-\theta))\\
                &=m(x_1+x_2-2\cos(\pi-\theta)).
\end{align*}
\end{proof}

   We now recall from \S\ref{subsec:linin} that for $\sigma\in \Gal(\BQ(\mu_N)/\BQ)$, we have the derivative of partial $L$-values given by
     \begin{equation} \label{partialeq}
     {L^{(N)'}}(0,\sigma)=\sum_{n=1 }^\infty \frac{(e^{2\pi i n/N}+e^{-2\pi i n/N})^{\sigma}}{n},\quad {L^{(N)'}}(-1,\sigma)=\frac{-iN }{8\pi  }\sum_{n=1 }^\infty \frac{(e^{2\pi i n/N}-e^{-2\pi i n/N})^{\sigma}}{n^2} \end{equation}
 with the notation emphasizing their dependence on $N \geq 1$.

 \begin{lem}[Euler system property]\label{Eu}
For a prime $p\mid N$ and positive integer $N'=N/p$, we have
    \begin{itemize}
        \item [(1)] $\sum_{\tau\in  \Gal(\BQ(\mu_N)/\BQ(\mu_{N'}))}L^{(N)'}(0,\tau\sigma_k)=\begin{cases}
        L^{(N')'}(0,\sigma_k),&p\mid N'\\
        L^{(N')'}(0,\sigma_k(1-\sigma_p^{-1})),&p\nmid N'\\
        \end{cases}$.
        \item [(2)] $
        \sum_{\tau\in  \Gal(\BQ(\mu_N)/\BQ(\mu_{N'}))}L^{(N)'}(-1,\tau\sigma_k)=\begin{cases}
        L^{(N')'}(-1,\sigma_k),&p\mid N'\\
        L^{(N')'}(-1,\sigma_k(1-p\sigma_p^{-1})),&p\nmid N'\\
        \end{cases}$.
    \end{itemize}
    where $\sigma_k\in \Gal(\BQ(\mu_N)/\BQ)$ is given by $\sigma_k: \zeta_N\longmapsto \zeta_N^k$.
    \end{lem}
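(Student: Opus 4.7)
The plan is to unfold the explicit formulas \eqref{partialeq} for $L^{(N)'}(0,\sigma)$ and $L^{(N)'}(-1,\sigma)$ and compute the trace over $\Gal(F/F')$ directly. The main combinatorial input is an explicit description of coset representatives for $\Gal(F/F')\subset\Gal(F/\BQ)\cong(\BZ/N\BZ)^\times$: when $p\mid N'$, these are $\sigma_{1+jN'}$ for $j=0,\ldots,p-1$; when $p\nmid N'$, one chooses integers $u,v$ with $uN'+vp=1$ so that by CRT the representatives are $\sigma_{a_b}$ with $a_b\equiv 1\pmod{N'}$ and $a_b\equiv b\pmod p$ for $b=1,\ldots,p-1$, with the CRT decomposition $\zeta_N^{ka_b}=\zeta_p^{bku}\zeta_{N'}^{kv}$. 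Together with the elementary identities $\sum_{j=0}^{p-1}\zeta_p^{jm}=p\cdot\mathbf{1}_{p\mid m}$ and $\prod_{j=0}^{p-1}(1-\zeta_p^j y)=1-y^p$, this reduces every inner sum to an expression in $\zeta_{N'}$.

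For part (1) I would rewrite $\sum_\tau L^{(N)'}(0,\tau\sigma_k)$ as $-\log\bigl|\mathrm{Nm}_{F/F'}\bigl((1-\zeta_N^k)(1-\zeta_N^{-k})\bigr)\bigr|$ and compute the norm explicitly. In the case $p\mid N'$ the telescoping identity immediately gives $\mathrm{Nm}_{F/F'}(1-\zeta_N^k)=1-\zeta_{N'}^k$, yielding $L^{(N')'}(0,\sigma_k)$. In the case $p\nmid N'$ the CRT decomposition yields $\prod_{b=1}^{p-1}(1-\zeta_p^b x)=(1-x^p)/(1-x)$ with $x=\zeta_{N'}^{kv}$ and $v\equiv p^{-1}\pmod{N'}$, producing $\mathrm{Nm}_{F/F'}(1-\zeta_N^k)=(1-\zeta_{N'}^k)/(1-\zeta_{N'}^{kp^{-1}})$; taking $-\log|\cdot|$ and using $\BQ$-linearity of $L^{(N')'}(0,\cdot)$ on $\BQ[G]$ gives $L^{(N')'}(0,\sigma_k(1-\sigma_p^{-1}))$.

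For part (2) I would interchange $\sum_\tau$ with $\sum_n$ and reduce everything to the inner Galois sum $S(n):=\sum_\tau\zeta_N^{nk\tau}$. When $p\mid N'$, $S(n)$ vanishes unless $p\mid n$, in which case $n=pn_1$ and $S(n)=p\zeta_{N'}^{n_1k}$; the outer prefactor $N=pN'$ absorbs the factor $p$ to give $L^{(N')'}(-1,\sigma_k)$ exactly. When $p\nmid N'$, $S(n)=(p-1)\zeta_{N'}^{n_1k}$ on $p\mid n$ and $S(n)=-\zeta_{N'}^{nkp^{-1}}$ on $p\nmid n$. Completing the second piece to a full sum over $n\geq 1$ recognises it (up to the standard prefactor) as $-L^{(N')'}(-1,\sigma_k\sigma_p^{-1})$; subtracting the added-back $p\mid n$ contribution, which equals $p^{-2}L^{(N')'}(-1,\sigma_k)$, and combining with the direct $(p-1)/p^2$ contribution from $S(n)$ on $p\mid n$, yields $L^{(N')'}(-1,\sigma_k)-pL^{(N')'}(-1,\sigma_k\sigma_p^{-1})$.

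The main obstacle, and the only reason the coefficient that appears is $(1-p\sigma_p^{-1})$ rather than the naive $(1-\sigma_p^{-1})$ from part (1), is the $n^{-2}$ weighting in the defining series: the $p\mid n$ piece of $L^{(N')'}(-1,\sigma_k\sigma_p^{-1})$ carries an extra $p^{-2}$, and after combining with the outer $N/N'=p$ and with the $(p-1)/p^2$ coefficient from $S(n)$ on $p\mid n$, the coefficient of $L^{(N')'}(-1,\sigma_k)$ collapses to $+1$ while the coefficient of $L^{(N')'}(-1,\sigma_k\sigma_p^{-1})$ becomes $-p$. Conceptually this is the standard appearance of the Euler factor $1-\chi(p)p^{-s}$ at $s=-1$, and it is precisely what is required to complete the proof of Theorem~\ref{thm:wmc}.
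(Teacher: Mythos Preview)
Your proposal is correct and essentially matches the paper's proof: both unfold the explicit series \eqref{partialeq}, interchange the Galois sum with $\sum_n$, and evaluate the inner character sum over $\ker\big((\BZ/N\BZ)^\times\to(\BZ/N'\BZ)^\times\big)$ to obtain the stated Euler factors. The only cosmetic difference is in part~(1), where the paper stays with the series representation throughout while you pass to the closed form $L^{(N)'}(0,\sigma_k)=-\log\big|(1-\zeta_N^k)(1-\zeta_N^{-k})\big|$ and compute $\Nm_{F/F'}(1-\zeta_N^k)$ via $\prod_j(1-\zeta_p^{\,j} y)=1-y^p$; this is a pleasant but inessential variation.
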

    \begin{proof}
We confirm the property by direct calculations using the explicit forms in Eq.\eqref{partialeq}. Let $G_{N,N'}:=\Gal(\BQ(\mu_N)/\BQ(\mu_{N'}))$ and $\varphi_{N,N'}:(\BZ/N\BZ)^\times\ra(\BZ/N'\BZ)^\times$ be a homomorphism. Then we have 
\begin{align*}
        \sum_{\tau\in G_{N,N'}}L^{(N)'}(0,\tau\sigma_k)
        =& \sum_{n=1 }^\infty \frac{\sum_{s\in \ker \varphi_{N,N'}}(e^{2\pi i \tfrac{sk n}{N}}+e^{-2\pi i \tfrac{sk n}{N}})}{n}\\
        =&\displaystyle \begin{cases}
        \sum_{n=1,p|n }^\infty \frac{p (e^{2\pi i \tfrac{k n}{N}}+e^{-2\pi i \tfrac{k n}{N}})}{n} ,&p\mid N'\\
       \sum_{n=1,p|n }^\infty \frac{(p-1)(e^{2\pi i \tfrac{k n}{N}}+e^{-2\pi i \tfrac{k n}{N}})}{n}- \sum_{n=1,p\nmid n }^\infty \frac{(e^{2\pi i \tfrac{k n}{pN'}}+e^{-2\pi i \tfrac{k n}{pN'}})}{n}  ,&p\nmid N'\\
        \end{cases}\\
        =&\begin{cases}
        L^{(N')'}(0,\sigma_k),&p\mid N'\\
        L^{(N')'}(0,\sigma_k(1-\sigma_p^{-1})),&p\nmid N'.
        \end{cases} 
\end{align*}

Similarly, we have 
\begin{align*}
\sum_{\tau\in  G_{N,N'}}L^{(N)'}(-1,\tau\sigma_k)
        =&\frac{-Ni}{8\pi }\sum_{n=1 }^\infty \frac{\sum_{s\in \ker \varphi_{N,N'}}(e^{2\pi i \tfrac{sk n}{N}}-e^{-2\pi i \tfrac{sk n}{N}})}{n^2}\\
        =&\begin{cases}
       \frac{-Ni}{8\pi }\sum_{n=1,p|n }^\infty \frac{p(e^{2\pi i \tfrac{k n}{N}}-e^{-2\pi i \tfrac{k n}{N}})}{n^2} ,&p\mid N'\\
      \frac{-Ni}{8\pi } \left( \sum_{n=1,p|n }^\infty \frac{(p-1)(e^{2\pi i \tfrac{k n}{N}}-e^{-2\pi i \tfrac{k n}{N}})}{n^2}-\sum_{n=1,p\nmid n }^\infty \frac{(e^{2\pi i k \tfrac{k n}{pN'}}-e^{-2\pi i \tfrac{k n}{pN'}})}{n^2} \right)  ,&p\nmid N'\\
        \end{cases}\\
        =&\begin{cases}
        L^{(N')'}(-1,\sigma_k),&p\mid N'\\
        L^{(N')'}(-1,\sigma_k(1-p\sigma_p^{-1})),&p\nmid N'.
        \end{cases} 
\end{align*}
\end{proof}

To proceed, we observe its connection to the Mahler measure by re-writing Lemma \ref{lm} in terms of partial $L$-values.

    \begin{lem}\label{wex}For $N\geq 3$, $N\neq 4$ and $1\leq k< N/2$ with $(k,N)=1$,  $m(x_1+x_2-2\cos \tfrac{2\pi k}{N})$ lies in $\BQ$-span of \begin{equation}\label{cos}\log|2\cos\tfrac{2\pi k}{N}|=\frac{1}{2}({L^{(N/(N,2))'}}(0,\sigma_{2k/(N, 2)})-{L^{(N/(N,4))'}}(0,\sigma_{4k/(4,N)}))\end{equation} and $\sum_{\substack{1\leq k\leq N\\ (k,N)=1}} \BQ L^{(N)'}(-1,0)$ from Proposition \ref{partind}. 
    
More precisely, we have
     \[\begin{aligned}
       \delta_k m(x_1+x_2-2\cos \tfrac{2\pi k}{N})=&\frac{N-4k}{2N}({L^{(N/(N,2))'}}(0,\sigma_{2k/(2,N)})-{L^{(N/(N,4))'}}(0,\sigma_{4k/(N, 4)}))\\
        +&{\frac{4}{N/(N,2)}L^{(N/(N,2))'}(-1,\sigma_{2k/(N,2)})-\frac{2}{N/(N,4)}L^{(N/(N,4))'}(-1,\sigma_{4k/(N,4)})}.
    \end{aligned}\]
    \end{lem}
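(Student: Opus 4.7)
The plan is to specialise Lemma~\ref{lm} at $\theta=2\pi k/N$ and then convert each piece of the resulting expression into partial $L$-values. With this choice of $\theta$, the factor $(1-2\theta/\pi)$ becomes $(N-4k)/N$, and the trigonometric series becomes $\pi^{-1}\sum_{n\geq 1}(-1)^{n-1}n^{-2}\sin(4\pi kn/N)$. So everything reduces to re-expressing $\log|2\cos(2\pi k/N)|$ and this sine series in the notation of \S\ref{subsec:linin}.

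For the cosine term, I would use the elementary identity $2\cos\alpha=\sin(2\alpha)/\sin\alpha$ to write
\[\log|2\cos(2\pi k/N)|=\log|2\sin(4\pi k/N)|-\log|2\sin(2\pi k/N)|.\]
Unpacking the definition of $L^{(M)'}(0,\sigma_a)$ gives $L^{(M)'}(0,\sigma_a)=-2\log|2\sin(\pi a/M)|$ whenever $(a,M)=1$. Since $(k,N)=1$, reducing the fractions $2k/N$ and $4k/N$ to lowest terms gives denominators $N/(N,2)$ and $N/(N,4)$ with numerators $2k/(N,2)$ and $4k/(N,4)$ respectively. Substituting yields exactly \eqref{cos}. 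Multiplying by $(N-4k)/N$ produces the first line of the claimed formula.

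For the series, I would split according to the parity of $n$:
\[\sum_{n\geq 1}\frac{(-1)^{n-1}}{n^2}\sin(4\pi kn/N)=\sum_{n\geq 1}\frac{\sin(4\pi kn/N)}{n^2}-2\sum_{n\text{ even}}\frac{\sin(4\pi kn/N)}{n^2},\]
and on the even part set $n=2m$, turning it into $\tfrac{1}{2}\sum_{m\geq 1}\sin(8\pi km/N)/m^2$. From \eqref{partialeq} we read off $L^{(M)'}(-1,\sigma_a)=\frac{M}{4\pi}\sum_{n\geq 1}\sin(2\pi an/M)/n^2$ for $(a,M)=1$; reducing $2k/N$ and $4k/N$ as above converts the two remaining sums into the single $L$-values $L^{(N/(N,2))'}(-1,\sigma_{2k/(N,2)})$ and $L^{(N/(N,4))'}(-1,\sigma_{4k/(N,4)})$. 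Dividing by $\pi$ produces the second line of the claimed formula.

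Assembling the two computations gives the identity. The only thing requiring care is the bookkeeping in the reduction of fractions, which produces the factors $N/(N,2)$, $N/(N,4)$ in the denominators and the character arguments $\sigma_{2k/(N,2)}$, $\sigma_{4k/(N,4)}$; the hypothesis $N\neq 4$ ensures $\cos(2\pi k/N)\neq 0$ so that Lemma~\ref{lm} applies without degeneracy, and the hypothesis $N\geq 3$ with $1\leq k<N/2$ keeps $\theta\in(0,\pi)$. No substantive obstacle is expected beyond these routine verifications.
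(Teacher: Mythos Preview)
Your derivation of the explicit formula is correct and matches the paper's approach almost exactly: both specialise Lemma~\ref{lm} at $\theta=2\pi k/N$ and then use the even/odd splitting $\sum(-1)^{n-1}a_n=\sum a_n-2\sum a_{2m}$ on the sine series. The one cosmetic difference is that for the term $\log|2\cos(2\pi k/N)|$ the paper first rewrites it as the Fourier series $\sum_{n\ge 1}(-1)^{n-1}\cos(4\pi nk/N)/n$ (via $\log|1+\zeta|=\tfrac12\log(1+\zeta^2)(1+\bar\zeta^2)$) and then applies the same parity split, whereas you use the identity $2\cos\alpha=\sin 2\alpha/\sin\alpha$ together with the closed form $L^{(M)'}(0,\sigma_a)=-2\log|2\sin(\pi a/M)|$; these give the same output.

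There is one point you skip. The lemma has two assertions: the explicit identity (which you prove) and the qualitative statement that the Mahler measure lies in the $\BQ$-span of $\log|2\cos(2\pi k/N)|$ together with the \emph{level-$N$} values $L^{(N)'}(-1,\sigma_k)$. Your formula produces partial $L$-values at the smaller levels $N/(N,2)$ and $N/(N,4)$, and passing from these back up to level~$N$ is not automatic: the paper invokes the Euler-system relations of Lemma~\ref{Eu} and the invertibility of $1-p\sigma_p^{-1}$ in $\BQ[G]$ for $p\nmid N$ to do this. You should add a sentence to that effect.
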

      \begin{proof}

The first part follows from Lemma \ref{Eu} and the fact that $1-p\sigma_p^{-1}$ is invertible in $\BQ[\Gal(\BQ(\mu_N)/\BQ)]$ for $(N,p)=1$.

For the second part, set $\delta_k=1$ if $1 \leq k<N/4$ and $-1$ if $N/4 \leq k <N/2$. Then Lemma Lemma \ref{lm} yields 
\[\begin{aligned}
\delta_k\cdot m(x_1+x_2-2\cos \tfrac{2\pi k}{N})=&\frac{N-4k}{N}\log|2\cos \tfrac{2\pi k}{N}|
+\frac{1}{\pi}\sum_{n=1}^\infty\frac{(-1)^{n-1}\sin \tfrac{4\pi nk}{N}}{n^2}\\
=&\frac{N-4k}{N}\sum_{n=1}^\infty\frac{(-1)^{n-1}\cos\tfrac{4\pi nk}{N}}{n}+\frac{1}{\pi}\sum_{n=1}^\infty\frac{(-1)^{n-1}\sin \tfrac{4\pi nk}{N}}{n^2},
    \end{aligned}\]where the second equality comes from the fact that  $\log|1+\zeta_N|=\frac{1}{2}\log(1+\zeta_N^2)(1+\ov{\zeta_N}^2)$ for $\zeta_N \in \mu_N$.

Note that we have
   \[\begin{aligned}
       \sum_{n=1}^\infty\frac{(-1)^{n-1}e^{i\theta n}}{n^k}
        =&\sum_{n=1}^\infty\frac{e^{i\theta n}}{n^k}-2\sum_{n=1}^\infty\frac{e^{i\theta 2n}}{2^kn^k}
    \end{aligned} ,\]
hence by definition given in Eq.\eqref{partialeq}, we conclude the claim. 
\end{proof}

We are ready to complete:
     \begin{proof}[Proof of Eq.\eqref{id:weaker}]
        Let $\CL_{-1}$, $\CL_{0}$ and $\CL$ be as defined in Section \ref{S:thmBC}.
Recall that \[\CL_{-1}^{\sharp}=\CL_{-1}=\bigoplus_{x\in S_1} \BQ L^{(N)'}(-1,x)\] where $S_1$ is a $\BQ$-basis of $\BQ[G]/<c+1>$, and $\CL_{0}^\sharp\subset \CL_{0}$ is given by 
    \[\CL_{0}^\sharp=\sum_{\zeta_N\in \mu_N^{\circ}}\BQ\log|\zeta_N+\zeta_N| \] where $\mu_N^\circ$ is the set of primitive $N$-th roots of unity. By definition, we have $\CL_{-1}^{\sharp}+ \CL_{0}^{\sharp}\subset \CL^{\sharp}$. In the following, we claim the equality via an explicit characterisation of the regulator map.

    Let $c\in H_\CM^2(Z,\partial A, \BQ(2))$ be the cohomology class constructed in \S\ref{S:randm}. It suffices to show that the image of the map
    \[H_{\text{sing},1}(Z_{\BR},\partial A_\BR,\BQ(1))\xrightarrow{\pair{\cdot,r_{\CD}(c)}}\BR\]
    is contained in $\CL_{-1}^{\sharp}+\CL_0$, where $r_{\CD}(c)\in H_{\CD}^2((Z,\partial A)_\BR,\BR(2))\simeq H_\text{sing}^1(Z,\partial A_{\BR}, \BQ(1))$ and $\pair{\ ,\ }$ denotes the natural pairing between $H_\text{sing}^1(Z,\partial A_{\BR}, \BQ(1))$ and $H_\text{sing, 1}(Z,\partial A_{\BR}, \BQ(-1))$. 
    
    Recall that we have \[Z=\bigsqcup_{\zeta_N\in \mu_N^\circ}\{z_2\in \BC\ |\ z_2\neq 0,\zeta_N\}\] and \[A=\bigsqcup_{\zeta_N\in \mu_{N}^\circ/c}[\ov{\zeta_N},\zeta_N]\] where $[\ov{\zeta_N},\zeta_N]\subset A$ is an arc $[\ov{\zeta_N},\zeta_N]=\{z_2\in \BC\ |\ |z_2|\leq 1, |\zeta_N+\ov{\zeta}_N-z_2|=1\}$. Here $\mu_N^\circ/c$ denotes the quotient of $\mu_N^\circ(=\partial A \subset \BC)$ by the complex conjugation $c$.
    We observe that $H_{\text{sing},1}(Z_{\BR},\partial A_\BR,\BQ(-1))/H_{\text{sing},0}(X_{\BR},\BQ)$ has a basis consisting of \[\frac{1}{2\pi i}[\ov{\zeta_N},\zeta_N],\quad \zeta_N\in \mu_N^\circ/c.\]

    Then by \cite{Den:97}, for any $\zeta_N=e^{2\pi i k/N}$ with $1 \leq k <N/2$ and $(k,N)=1$, we have 
\[\pair{c,\frac{1}{2\pi i}[\ov{\zeta_N},\zeta_N]}=m((x_1+x_2-2\cos \tfrac{2\pi k}{N})^*)-m(x_1+x_2-2\cos \tfrac{2\pi k}{N}).\]
Note that for $N \geq 3$, $N\neq 4$, by Jensen's formula, we have
\begin{equation}\label{onem}m((x_1+x_2-2\cos \tfrac{2\pi k}{N})^*)=\log\max\{1,|2\cos \tfrac{2\pi k}{N}|\}\end{equation}
and generically zero if $N=4$. Thus by Lemma \ref{wex}, we conclude the proof.
\end{proof}

\subsection{Explicit formulae} \label{subsec:appendix:exp}

We present several calculations that confirm our main result, in particular with an explicit description for the coefficient constant.  

Recall that we have a two-variable variant of cyclotomic polynomial defined as an irreducible monic polynomial with integral coefficients:
\[   \Psi_{N}(x)=\prod_{\substack{1\leq k<N/2\\ (k,N)=1}}(x-2\cos \tfrac{2\pi k }{N}) \]
and $f_N(x_1, x_2):=\Psi_N(x_1+x_2) \in \BZ[x_1^\pm, x_2^\pm]$.
 
    \begin{prop}\label{mainl}For $N\geq 3$, $N\neq 4$, we have $m(f_N)$ is a $\BQ$-linear combination of \[\log|2\cos \tfrac{2\pi k}{N}| \ \mbox{and } L^{(N)'}(-1,\sigma_{k}),\quad 1\leq k\leq N,\quad (k,N)=1. \] More precisely, we have 
     \[\begin{aligned}
        m(f_N)=&\sum_{\substack{1\leq k<N/4\\ (k,N)=1}}\frac{N-4k}{2N}({L^{(N/(N,2))'}}(0,\sigma_{2k/(2,N)})-{L^{(N/(N,4))'}}(0,\sigma_{4k/(N, 4)}))\\
        -&\sum_{\substack{N/4\leq k<N/2\\ (k,N)=1}}\frac{N-4k}{2N}({L^{(N/(N,2))'}}(0,\sigma_{2k/(N, 2)})-{L^{(N/(N,4))'}}(0,\sigma_{4k/(N, 4)}))\\
        +&\sum_{\substack{1\leq k<N/4\\ (k,N)=1}}(\frac{4}{N/(N,2)}L^{(N/(N,2))'}(-1,\sigma_{2k/(N,2)})-\frac{2}{N/(N,4)}L^{(N/(N,4))'}(-1,\sigma_{4k/(N,4)}))\\
        -&\sum_{\substack{N/4\leq k<N/2\\ (k,N)=1}}(\frac{4}{N/(N,2)}L^{(N/(N,2))'}(-1,\sigma_{2k/(N,2)})-\frac{2}{N/(N,4)}L^{(N/(N,4))'}(-1,\sigma_{4k/(N,4)}))
    \end{aligned}\]
    \end{prop}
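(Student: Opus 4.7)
The plan is to reduce the statement to a factor-by-factor computation using the multiplicativity of the Mahler measure, and then invoke Lemma \ref{wex} for each linear factor in the factorisation of $f_N$ over $\BR$.

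First I would observe that $\Psi_N(x)=\prod_{\substack{1\leq k<N/2\\ (k,N)=1}}(x-2\cos\tfrac{2\pi k}{N})$ factors $f_N$ as
\[ f_N(x_1,x_2)=\prod_{\substack{1\leq k<N/2\\ (k,N)=1}}\bigl(x_1+x_2-2\cos\tfrac{2\pi k}{N}\bigr). \]
Since the Mahler measure of a product of Laurent polynomials equals the sum of the Mahler measures of the factors (this is immediate from the definition in \eqref{def:mahler:meas} and the additivity of $\log$), we obtain
\[ m(f_N)=\sum_{\substack{1\leq k<N/2\\ (k,N)=1}} m\!\bigl(x_1+x_2-2\cos\tfrac{2\pi k}{N}\bigr). \]

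Next I would apply Lemma \ref{wex} to each summand. That lemma gives, for $1\leq k<N/2$ with $(k,N)=1$,
\[ \begin{aligned}
\delta_k\, m\!\bigl(x_1+x_2-2\cos\tfrac{2\pi k}{N}\bigr)
=&\frac{N-4k}{2N}\bigl({L^{(N/(N,2))'}}(0,\sigma_{2k/(2,N)})-{L^{(N/(N,4))'}}(0,\sigma_{4k/(N,4)})\bigr)\\
&+\tfrac{4}{N/(N,2)}L^{(N/(N,2))'}(-1,\sigma_{2k/(N,2)})-\tfrac{2}{N/(N,4)}L^{(N/(N,4))'}(-1,\sigma_{4k/(N,4)}),
\end{aligned} \]
where $\delta_k=+1$ for $1\leq k<N/4$ and $\delta_k=-1$ for $N/4\leq k<N/2$ (the edge case $N=4$, which would require $k=N/4$ with the associated vanishing noted in Lemma \ref{lm}, is excluded by hypothesis). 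This already establishes the qualitative claim that $m(f_N)$ lies in the $\BQ$-span of $\log|2\cos\tfrac{2\pi k}{N}|$ and $L^{(N)'}(-1,\sigma_k)$ for $(k,N)=1$, once one combines the identity \eqref{cos} expressing $\log|2\cos\tfrac{2\pi k}{N}|$ in terms of partial $L$-values at $s=0$ and the Euler system property from Lemma \ref{Eu} to rewrite the conductor-$N/(N,2)$ and $N/(N,4)$ partial $L$-values in terms of conductor-$N$ ones.

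Finally, to obtain the displayed explicit formula, I would split the summation range $1\leq k<N/2$ according to the sign of $\delta_k$, multiplying the contributions in the range $N/4\leq k<N/2$ by $-1$ to absorb $\delta_k^{-1}=\delta_k$. This reorganisation yields exactly the four-line expression in the statement. No obstacle is anticipated here: the argument is a direct bookkeeping step once Lemma \ref{wex} is in hand. The substantive analytic input was already established in Lemma \ref{lm} via Smyth's integral evaluation and in Lemma \ref{wex} via the manipulation of the alternating Fourier series into partial $L$-values of conductors $N/(N,2)$ and $N/(N,4)$; the present proposition is purely the assembly of those pieces across the irreducible linear factors of $f_N$.
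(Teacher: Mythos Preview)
Your proposal is correct and follows the same approach as the paper: the paper's proof is a single line stating that the result is a direct consequence of Lemma \ref{wex}, and you have simply spelled out the obvious decomposition $m(f_N)=\sum_k m(x_1+x_2-2\cos\tfrac{2\pi k}{N})$ via multiplicativity, followed by the sign bookkeeping with $\delta_k$.
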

    \begin{proof}This is a direct consequence of Lemma \ref{wex}. We remark that when $N=4$, we have $m(f_N)=0$. \end{proof}

This now enables us to express $m(f_N)$ as a $\BQ$-linear combination of $L'(0,\chi)$ for an even $\chi$ modulo $N$ and $L'(-1,\chi)$ for an odd $\chi$ modulo $N$.
  
      \begin{cor}\label{maincal}
    We have  \[m(f_N)=\sum_{\chi\in \wh{(\BZ/N\BZ)^\times}}\mu_{N,\chi}\delta_{N,\chi} L^{(N_\chi)'} \left(\frac{\chi(-1)-1}{2},\chi \right),\]where $N_\chi=\begin{cases}N/4,&\quad 4\parallel N, \text{$\chi$ even, and $\cond(\chi)\mid N/4$}\\ 
    N,&\quad \text{otherwise,}\end{cases}$ 
    \[
    \mu_{N,\chi}
    =
    \begin{cases}
        \sum_{\substack{1\leq k<N/4\\ (k,N)=1}} (N-4k) {\chi^{-1}(k )} -\sum_{\substack{N/4\leq k<N/2\\ (k,N)=1}}(N-4k) {\chi^{-1}(k )},\quad& \text{$\chi$ is even}\\
        \sum_{\substack{1\leq k<N/4\\ (k,N)=1}}\chi^{-1}(k)-\sum_{\substack{N/4\leq k<N/2\\ (k,N)=1}}\chi^{-1}(k),\quad&\text{$\chi$ is odd}\\
    \end{cases}\]
and 
\[
    \delta_{N,\chi}
    =
    \begin{cases}
       \frac{\chi^{-1}(2)}{2N\varphi(N)}(1-\chi^{-1}(2)) ,&\quad \text{$2\nmid N$ and $\chi$ is even}\\
\frac{4\chi^{-1}(2)}{N\varphi(N)}(1-\chi^{-1}(2)/{2}),&\quad \text{$2\nmid N$ and $\chi$ is odd}\\
\frac{-\chi^{-1}(2)}{2N\varphi(N)},&\quad \text{$2\parallel N$ and $\chi$ is even and $\cond(\chi)\mid N/2$}\\
\frac{-4\chi^{-1}(2)}{N\varphi(N)},&\quad \text{$2\parallel N$ and $\chi$ is odd and $\cond(\chi)\mid N/2$}\\
\frac{-\chi(2)}{2\varphi(N/4)N},&\quad \text{$4\parallel N$, $\chi$ is even and $\cond(\chi)\mid N/4$}\\
\frac{-4\chi(2)}{\varphi(N/4)N/4}\frac{1}{1-2\chi(2)},&\quad \text{$4\parallel N$, $\chi$ is odd and $\cond(\chi)\mid N/4$}\\
\frac{1}{4N\varphi(N/4)},&\quad \text{$8\mid N$, $\chi$ is even, $N/4\nmid \cond(\chi)\mid N/2$}\\
\frac{-1}{4N\varphi(N/4)},&\quad \text{$8\mid N$, $\chi$ is even, $\cond(\chi)\mid N/4$}\\
\frac{1}{\varphi(N/4)N/4},&\quad \text{$8\mid N$, $\chi$ is odd, $N/4\nmid \cond(\chi)\mid N/2$}\\
\frac{-1}{\varphi(N/4)N/4},&\quad \text{$8\mid N$, $\chi$ is odd, $\cond(\chi)\mid N/4$}\\
0,&\quad \text{otherwise}.
    \end{cases}\]
\end{cor}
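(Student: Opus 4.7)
The plan is to start from the explicit formula for $m(f_N)$ in Proposition \ref{mainl}, which expresses $m(f_N)$ as a $\BQ$-linear combination of partial $L$-values ${L^{(N/(N,2))'}}(-\epsilon,\sigma_{2k/(N,2)})$ and ${L^{(N/(N,4))'}}(-\epsilon,\sigma_{4k/(N,4)})$ for $\epsilon\in\{0,1\}$ and $1\le k<N/2$, $(k,N)=1$, and then apply Fourier inversion on $(\BZ/M\BZ)^\times$ to rewrite these partial $L$-values as linear combinations of honest Dirichlet $L$-values. Concretely, for $M\in\{N/(N,2),N/(N,4)\}$ and $\epsilon\in\{0,1\}$, I will use the orthogonality relation
\[
L^{(M)'}(-\epsilon,\sigma_a)=\frac{1}{\varphi(M)}\sum_{\chi\bmod M,\ \chi(-1)=(-1)^\epsilon}\chi^{-1}(a)\, L^{(M)'}(-\epsilon,\chi),
\]
valid once we restrict to the parity matching $\epsilon$ (the other parity contributes $0$ to the partial value, by the formulas in Section~\ref{subsec:linin}).

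Next, I will bookkeep the characters. Given $\chi\in\wh{(\BZ/N\BZ)^\times}$, I lift the inner sums over $k$ to sums of $\chi^{-1}(k)$ weighted by $(N-4k)$ (for the $s=0$ terms) or by $1$ (for the $s=-1$ terms), with sign $+$ when $1\le k<N/4$ and $-$ when $N/4\le k<N/2$. This produces exactly $\mu_{N,\chi}$ as defined in the statement. The delicate point is to reduce $L^{(M)'}(-\epsilon,\chi)$ (where $\chi$ is regarded as a character modulo $M$) back to $L^{(N_\chi)'}(-\epsilon,\chi)$, where $\chi$ is viewed as a character of its canonical level $N_\chi$. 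For this I will apply Lemma \ref{Eu} (the Euler system identity) iteratively: each prime $p\mid M$ with $p\nmid\cond(\chi)$ contributes a factor $(1-\chi^{-1}(p))$ at $s=0$ and $(1-p\chi^{-1}(p))$ at $s=-1$. Since the primes $p=2$ is the only one that matters (the other primes already divide $\cond(\chi)$ in the generic case and no such correction is needed), the correction factors reduce to suitable rational multiples involving $\chi(2)$.

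The third and most intricate step is the case analysis that yields the explicit formula for $\delta_{N,\chi}$. I will split cases according to the $2$-adic valuation $v_2(N)\in\{0,1,2,\ge 3\}$ and to whether $\cond(\chi)\mid N/4$, $\cond(\chi)\mid N/2$ but $\cond(\chi)\nmid N/4$, or $\cond(\chi)\nmid N/2$. In each case, I combine the two contributions from $M=N/(N,2)$ and $M=N/(N,4)$ in Proposition \ref{mainl}; the normalising factors $\frac{1}{\varphi(M)}$ combine with the Euler factor $(1-\chi^{-1}(2))$ or $(1-2\chi^{-1}(2))$ and with the coefficients $\frac{N-4k}{2N}$, $\frac{4}{N/(N,2)}$, $\frac{2}{N/(N,4)}$ to produce $\delta_{N,\chi}$. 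When $v_2(N)=2$ and $\cond(\chi)\mid N/4$, the two partial $L$-value terms are no longer at the same level and the correction factor $1-2\chi(2)$ appears in the denominator, giving the distinctive formula for the $4\parallel N$, $\chi$ odd case; when $v_2(N)\ge 3$ and $\cond(\chi)\nmid N/4$, one of the two terms vanishes, which explains why $\delta_{N,\chi}$ has only one surviving contribution.

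The main obstacle I expect is the bookkeeping in the case analysis: the interaction between the $(N,2)$-vs-$(N,4)$ normalisations and the conductor of $\chi$ produces many sub-cases, and one has to be careful that partial $L$-values at the wrong level are truly zero or are correctly promoted to full $L$-values via Lemma \ref{Eu}. Once the bookkeeping is in place, Proposition \ref{mainl} yields the identity in Corollary~\ref{maincal} directly, and the residual terms involving $\log|2\cos(2\pi k/N)|$ are absorbed into the $s=0$ partial $L$-values via identity \eqref{cos}, so no separate logarithmic contribution appears in the final formula (except implicitly when $N_\chi=N/4$ in the $4\parallel N$, $\chi$ even case).
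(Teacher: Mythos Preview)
Your proposal is correct and follows essentially the same approach as the paper: start from Proposition~\ref{mainl}, apply character orthogonality on $(\BZ/M\BZ)^\times$ with $M\in\{N/(N,2),N/(N,4)\}$ to pass from partial to Dirichlet $L$-values, and then use Lemma~\ref{Eu} to adjust levels, yielding the case analysis on $v_2(N)$ and $\cond(\chi)$. The paper's proof only writes out the case $(N,4)=2$ in detail and declares the remaining cases analogous, so your outline is in fact more complete than what appears in the text.
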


 \begin{proof} 
 Here we briefly show the case $(N,4)=2$, the other cases proceed similarly.  
Write $N=2M$ with an odd $M$. Let $\chi\in \wh{(\BZ/ {M} \BZ)^\times}$. Then by Lemma \ref{Eu} and Proposition \ref{mainl}, 
     \[\begin{aligned}
        &m(f_N)\\
       = &\sum_{\substack{ \chi: \text{even}}}(\sum_{\substack{1\leq k<N/4\\ (k,N)=1}} (N-4k) {\chi^{-1}(k )} -\sum_{\substack{N/4\leq k<N/2\\ (k,N)=1}}(N-4k) {\chi^{-1}(k )})\frac{1-\chi^{-1}(2)}{2N\varphi(N)}L^{(M)'}(0,\chi)\\
        +&\sum_{\substack{\chi:\text{odd}}}(\sum_{\substack{1\leq k<N/4\\ (k,N)=1}}\chi^{-1}(k)-\sum_{\substack{N/4\leq k<N/2\\ (k,N)=1}}\chi^{-1}(k))\frac{8-4\chi^{-1}(2)}{N\varphi(N)} L^{(M)'}(-1,\chi)\\
           = &\sum_{\substack{ \chi: \text{even}}}(\sum_{\substack{1\leq k<N/4\\ (k,N)=1}} (N-4k) {\chi^{-1}(k )} -\sum_{\substack{N/4\leq k<N/2\\ (k,N)=1}}(N-4k) {\chi^{-1}(k )})\frac{-\chi^{-1}(2)}{2N\varphi(N)}L^{(N)'}(0,\chi)\\
       + &\sum_{\substack{\chi: \text{odd}}}(\sum_{\substack{1\leq k<N/4\\ (k,N)=1}}\chi^{-1}(k)-\sum_{\substack{N/4\leq k<N/2\\ (k,N)=1}}\chi^{-1}(k))\frac{-4\chi^{-1}(2)}{N\varphi(N)}L^{(N)'}(-1,\chi).
    \end{aligned} \]
        \end{proof}

Combining the results on $m(f_N^*)$ (cf.~Equation~\eqref{onem}) and Corollary \ref{maincal}, we have:

\begin{cor}\label{exp}
 We have  \[m(f_N^*)-m(f_N)=\sum_{\chi\in \wh{(\BZ/N\BZ)^\times}}r_{N,\chi}L^{(N_\chi)'}\left(\frac{\chi(-1)-1}{2},\chi \right)\]where $N_\chi=\begin{cases}N/4,&\quad 4\parallel N, \text{$\chi$ even and $\cond(\chi)\mid N/4$}\\ 
    N,&\quad \text{otherwise}\end{cases}$, $r_{N,\chi}=\epsilon_{N,\chi}\delta_{N,\chi}$ with $\delta_{N,\chi}$ is as in Corollary \ref{maincal} and 
    \[\displaystyle
    \epsilon_{N,\chi}
    =
    \begin{cases}
        \substack{\sum_{\substack{1\leq k<N/2,(k,N)=1\\ |4k-N|>N/3 }}N{\chi^{-1}(k )} -\sum_{\substack{1\leq k<N/4\\ (k,N)=1}} (N-4k) {\chi^{-1}(k )} +\sum_{\substack{N/4\leq k<N/2\\ (k,N)=1}}(N-4k) {\chi^{-1}(k )}}, \quad& \text{$\chi$ is even}\\
        -\sum_{\substack{1\leq k<N/4\\ (k,N)=1}}\chi^{-1}(k)+\sum_{\substack{N/4\leq k<N/2\\ (k,N)=1}}\chi^{-1}(k), \quad&\text{$\chi$ is odd.}
    \end{cases}\]
\end{cor}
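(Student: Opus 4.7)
The plan is to combine the formula for $m(f_N)$ given by Corollary \ref{maincal} with an explicit Jensen-type computation of $m(f_N^*)$, and then to collect the coefficient of each Dirichlet $L$-value.

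First, since $f_N^*(x_1) = f_N(x_1,0) = \Psi_N(x_1)$, Jensen's formula yields
\[m(f_N^*) = \sum_{\substack{1\leq k<N/2\\(k,N)=1}} \log\max\{1,\,|2\cos(2\pi k/N)|\}.\]
For $1\leq k<N/2$, the inequality $|2\cos(2\pi k/N)|>1$ is equivalent to $|4k-N|>N/3$, so only these $k$ contribute. Applying the identity \eqref{cos} of Lemma \ref{wex}, which rewrites $\log|2\cos(2\pi k/N)|$ as $\tfrac12\bigl(L^{(N/(N,2))'}(0,\sigma_{2k/(N,2)})-L^{(N/(N,4))'}(0,\sigma_{4k/(N,4)})\bigr)$, converts $m(f_N^*)$ into a $\BQ$-linear combination of partial $L$-values at $s=0$ at moduli dividing $N$.

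Second, I would subtract the formula of Corollary \ref{maincal} from this expression to obtain $m(f_N^*)-m(f_N)$ as a sum of partial $L$-values, and then decompose each partial $L$-value into its character components via the standard orthogonality formula $L^{(N)'}(-\varepsilon,\sigma_k)=\varphi(N)^{-1}\sum_\chi\chi^{-1}(k)L^{(N)'}(-\varepsilon,\chi)$, restricting to even (resp.\ odd) $\chi$ when $\varepsilon=0$ (resp.\ $\varepsilon=1$). For odd $\chi$, only the $L'(-1,\chi)$-terms coming from $m(f_N)$ survive, and the resulting coefficient is $-\mu_{N,\chi}$ (up to $\delta_{N,\chi}$), which reproduces the odd branch of $\epsilon_{N,\chi}$. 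For even $\chi$, the $L'(0,\chi)$-contribution from $m(f_N^*)$ combines with those already present in $m(f_N)$: the new sum $\sum_{|4k-N|>N/3}N\chi^{-1}(k)$ together with the two preexisting sums $\mp(N-4k)\chi^{-1}(k)$ assemble into the stated $\epsilon_{N,\chi}$.

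Third, the Euler system relations of Lemma \ref{Eu} govern how $L^{(M)'}(-\varepsilon,\sigma_k)$ for $M=N/(N,2)$ or $N/(N,4)$ expands against characters modulo $N$. The factors $1-\chi^{-1}(2)$ and $1-2\chi^{-1}(2)$ that appear there vanish or invert according to whether $\chi^{-1}(2)=1$, which is precisely the dichotomy $N_\chi=N$ versus $N_\chi=N/4$ in the statement; this is encoded by the coefficient $\delta_{N,\chi}$, inherited verbatim from Corollary \ref{maincal}.

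The main obstacle is the case analysis over $v_2(N)\in\{0,1,2,\geq 3\}$ combined with whether $\cond(\chi)\mid N/4$, which is what produces the nine-line table for $\delta_{N,\chi}$. This is pure bookkeeping, but the interaction between the $m(f_N^*)$ contribution, the even-character piece of $m(f_N)$, and the Euler system descent has to be tracked consistently for each of the ranges $2\nmid N$, $2\parallel N$, $4\parallel N$, $8\mid N$. Once this has been verified, the identity $r_{N,\chi}=\epsilon_{N,\chi}\delta_{N,\chi}$ follows by comparison of coefficients.
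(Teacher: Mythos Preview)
Your proposal is correct and follows essentially the same route as the paper: the paper's own proof consists of the single sentence ``Combining the results on $m(f_N^*)$ (cf.~Equation~\eqref{onem}) and Corollary~\ref{maincal}, we have'', and your outline is precisely the unpacking of that combination. Your observation that the same Euler-system conversion from partial $L$-values at levels $N/(N,2)$, $N/(N,4)$ to character $L$-values applies uniformly to both $m(f_N)$ and $m(f_N^*)$ (because both pass through the identity~\eqref{cos}) is exactly why $\delta_{N,\chi}$ carries over unchanged, and why the $m(f_N^*)$-contribution rescales to the $N\chi^{-1}(k)$ term in $\epsilon_{N,\chi}$.
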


\medskip 

Finally, we conclude the non-zeroness statement related to $r_{N,\chi}$. Let $\chi$ be an odd Dirichlet character of the conductor $c\geq 3$. 
Note that it is enough to assume that $c$ is odd or $4 \mid c$.

\begin{lem}\label{nzo}Let $N=\begin{cases}
c,\quad &\text{if $c$ is odd},\\ 
2c,\quad &\text{if $4 \mid c$}.
\end{cases}$.
We have 
\[\mu_{N,\chi}=\begin{cases}
2\chi(2)(1-\frac{\chi(2)}{2})L(0,\chi^{-1}),\quad &\text{if $c$ is odd}\\
4L(0,\chi^{-1}),\quad &\text{if $4 \mid c$},
\end{cases}\]which is in particular non-zero.
\end{lem}
\begin{proof}Let $S_1=\sum_{\substack{1\leq k<N/4\\ (k,N)=1}}\chi^{-1}(k)$ and $S_2=-\sum_{\substack{N/4\leq k<N/2\\ (k,N)=1}}\chi^{-1}(k)$.
First consider $c=N$ is odd.
Then by \cite[Lem.~4]{Xu:Zhang}, one has
\[S_1=\frac{2+\chi(2)-\chi(4)}{2}L(0,\chi^{-1}),\]where $L(0,\chi^{-1})=-\frac{1}{c}\sum_{k=1}^c k \chi^{-1}(k)$.
Then by \cite[Lem.~3]{Xu:Zhang}, one has
 \[S_1-S_2=(2-\chi(2))L(0,\chi^{-1}).\]

Hence \[
\begin{aligned}
\mu_{N,\chi}=&2\chi(2)(1-\frac{\chi(2)}{2})L(0,\chi^{-1}).
\end{aligned}\] 
If $c$ is even, we have $N=2c$ and 
$\mu_{N,\chi}=2S_1$.
The function $f=1_{1\leq k\leq  c/2-1}$ on $\BZ/c\BZ$ has a Fourier expansion 
\[\begin{aligned}
f(x)=&\sum_{k=1}^c \left(\frac{1}{c}\sum_{a=1}^{c/2-1}e^{\frac{-2\pi i a k}{c}}\right)e^{\frac{2\pi i k x}{c}} \\
\end{aligned}\]
and thus \[\begin{aligned}
 \frac{1}{c}\sum_{i=1}^{c/2}\chi^{-1}(c)=&{ \sum_{k=1}^c \left(\frac{1}{c}\sum_{a=1}^{c/2-1}e^{\frac{-2\pi i a k}{c}}\right)(\frac{1}{c}\sum_{x=1}^c e^{\frac{2\pi i k x}{c}}\chi^{-1}(x))}\\
 =& \frac{\tau(\chi^{-1})}{c}\frac{2}{c}\sum_{k=1, (k,c)=1}^c  \left(\frac{1}{1-e^{\frac{-2\pi i k }{c}}}\right)\chi(k) .\\
\end{aligned}\]

Note that for $1\leq a< c$,
\[\frac{a}{c}=\frac{c-1}{2c}-\frac{1}{c}\sum_{k=1}^{c-1}\frac{e^{\frac{2\pi i ka}{c}}}{1-e^{\frac{-2\pi i k}{c}}} \]
in turn 
\[\begin{aligned}
L(0,\chi^{-1})=&-\frac{1}{c}\sum_{a=1}^c a\chi^{-1}(a)\\
=&\frac{\tau(\chi^{-1})}{c} \sum_{k=1, (k,c)=1}^{c}\frac{1}{1-e^{\frac{-2\pi i k}{c}}}\chi(k).
\end{aligned}\]
Thus we have
$\sum_{i=1}^{c/2}\chi^{-1}(c)=2L(0,\chi^{-1})$.
\end{proof}

\bibliographystyle{alpha}
\bibliography{Dirichlet}

\end{document}